\newcommand{\bbL}{\mathbb{L}}
\newcommand{\bbM}{\mathbb{M}}
\newcommand{\bbE}{\mathbb{E}}
\newcommand{\bbV}{\mathbb{V}}
\newcommand{\Var}{\bbV{\rm ar}}
\newcommand{\bbP}{\mathbb{P}}
\newcommand{\bbZ}{\mathbb{Z}}
\newcommand{\bbR}{\mathbb{R}}
\newcommand{\eps}{\epsilon}
\newcommand{\ol}{\overline}
\newcommand{\wh}{\widehat}
\newcommand{\wc}{\widecheck}
\newcommand{\cA}{\mathcal A}
\newcommand{\cC}{\mathcal C}
\newcommand{\cX}{\mathcal X}
\newcommand{\cB}{\mathcal B}
\newcommand{\cW}{\mathcal W}
\newcommand{\cE}{\mathcal E}
\newcommand{\cN}{\mathcal N}
\newcommand{\cF}{\mathcal F}
\newcommand{\cQ}{\mathcal Q}
\newcommand{\cU}{\mathcal U}
\newcommand{\cV}{\mathcal V}
\newcommand{\cD}{\mathcal D}
\newcommand{\rmc}{{\rm c}}
\newcommand{\rmd}{{\rm d}}
\newcommand{\rme}{{\rm e}}
\newcommand{\rmB}{{\rm B}}
\newcommand{\wt}{\widetilde}
\newtheorem{theorem}{Theorem}[section]
\newtheorem{cor}[theorem]{Corollary}
\newtheorem{lem}[theorem]{Lemma}
\newtheorem{prop}[theorem]{Proposition}
\newtheorem{thm}[theorem]{Theorem}
\newtheorem{definition}[theorem]{Definition}
\newtheorem*{rem*}{Remark}
\numberwithin{equation}{section}
\title
{On the Growth of the Extremal and Cluster Level Sets in Branching Brownian Motion}
\date{}
\author
{Lisa Hartung \thanks{lhartung@uni-mainz.de,  oren.louidor@gmail.com, wtq0000@gmail.com} \\ 
Universit\"at Mainz\and Oren Louidor\footnotemark[1]\\Technion, Israel\and 
Tianqi Wu\footnotemark[1]\\Technion, Israel }
\begin{document}
\maketitle
\begin{abstract}
We study the limiting extremal and cluster point processes of branching Brownian motion. The former records the heights of all extreme values of the process, while the latter records the relative heights of extreme values in a genealogical neighborhood of order unity around a local maximum thereof. For the extremal point process, we show that the 
mass of upper level sets $[-v, \infty)$ grows as $C_\star Z v e^{\sqrt{2} v}(1+o(1))$ as $v \to \infty$, almost surely, where $Z$ is the limit of the associated  derivative martingale and $C_\star \in (0, \infty)$ is a universal constant. For the cluster point process, we show that the logarithm of the mass of $[-v, \infty)$ grow as $\sqrt{2}v$ minus random fluctuations of order $v^{2/3}$, which are governed by an explicit law in the limit. The first result improves upon the works of Cortines et al.~\cite{CHL19} and Mytnik et al.~\cite{MRR22} in which asymptotics are shown in probability, while the second makes rigorous the derivation 
in the physics literature by Mueller et al.~\cite{Munier1} and Le et al.~\cite{Munier2} and resolves a conjecture thereof.
\end{abstract} 

\tableofcontents

\section{Introduction and results}
\subsection{Setup}
This work addresses several questions which have been raised recently concerning the extreme values of branching Brownian motion (BBM). Let us first recall the definition of BBM and some of the state-of-the-art results concerning its extreme value statistics. Let $L_t$ be the set of particles alive at time $t \geq 0$ in a continuous time Galton-Watson process with binary branching at rate $1$. The entire genealogy can be recorded via the metric space $(T,\rmd)$, consisting of the elements $T := \bigcup_{t \geq 0} L_t$ and equipped with the {\em genealogical distance} 
\begin{equation}
\rmd (x,x') := \tfrac12 \Big(\big(|x|-|x \wedge x'|\big) + \big(|x'|-|x \wedge x'|\big)\Big)
\, , \qquad x,x' \in T \,.
\end{equation}
In the above, $|x|$ stands for the generation of $x$, namely $t$ such that $x \in L_t$ and $x \wedge x'$ is the most recent common ancestor of $x$ and $x'$. We shall also write $x_s$ for the ancestor of $x$ at generation $0 \leq s \leq |x|$.

Conditional on $(T,\rmd)$, let $h = (h(x) :\: x \in T)$ be a mean-zero continuous Gaussian process with covariance function given by 
\begin{equation}
	\bbE h(x) h(x') = |x \wedge x'| \, , \qquad x,x' \in T \,.
\end{equation}
The triplet $(h, T,\rmd)$ (or just $h$ for short) forms a standard BBM, and $h(x)$ for $x \in L_t$ is interpreted as the height of particle $x$ at time $t$. As such, this process models a system in which particles diffuse as standard Brownian motions (BMs) and undergo binary splitting at $\text{Exp}(1)$-distributed random times. The restriction of $T$ to all particles born up to time $t$ will be denoted by $T_t := \bigcup_{s \leq t} L_s$, with $\rmd_t$ and $h_t$ the corresponding restrictions of $\rmd$ and $h$, respectively. The natural filtration of the process $(\cF_t :\: t \geq 0)$ can then be defined via $\cF_t = \sigma(h_t, T_t, \rmd_t)$ for all $t \geq 0$.

The study of extreme values of $h$ dates back to works of Ikeda et al.~\cite{Ikeda1,Ikeda2,Ikeda3}, McKean~\cite{McKean}, Bramson \cite{B_M, B_C} and Lalley and Sellke~\cite{LS} who derived asymptotics for the law of the
maximal height $h^*_t = \max_{x \in L_t} h_t(x)$. Introducing the centering function
\begin{equation}
\label{eq_m_t}
m_t := \sqrt{2}t - \frac{3}{2\sqrt{2}} \log^+ t \,,
\quad  \text{where } \qquad
\log^+ t := \log (t \vee 1) \,,
\end{equation}
and writing
$\wh{h}_t$ for the centered process $h_t - m_t$ and $\wh{h}^*_t : = h_t^\ast - m_t$ for the centered maximum, these works show 
\begin{equation}
\label{e:101.4}
\wh{h}^*_t \underset{t \to \infty} \Longrightarrow G + \tfrac{1}{\sqrt{2}} \log Z  \,,
\end{equation}
where $G$ is a Gumbel distributed random variable with rate $\sqrt{2}$ and $Z$, which is independent of $G$, is the almost sure limit as $t \to \infty$ of (a multiple of) the so-called {\em derivative martingale}:
\begin{equation}
\label{e:303}
\textstyle
Z_t := C_\diamond \sum_{x \in L_t} \big( \sqrt{2} t - h_t(x) \big) \rme^{\sqrt{2} (h_t(x) - \sqrt{2}t)} \,,
\end{equation}
for some properly chosen $C_\diamond > 0$. Other extreme values of $h$ can be studied simultaneously by considering its {\em extremal process}. To describe the latter, given $t \geq 0$, $x \in L_t$ and $r > 0$, we let $\cC_{t,r}(x)$ denote the {\em cluster} of {\em relative heights} of particles in $L_t$, which are at genealogical distance at most $r$ from $x$. This is defined formally as the point measure
\begin{equation}
\label{e:5B}
\textstyle
\cC_{t,r}(x) := \sum_{y \in \rmB_{r}(x)} \delta_{h_t(y) - h_t(x)},\  \text{ where} \quad
\rmB_{r}(x) := \{y \in L_t :\: \rmd (x,y) < r\} \,.
\end{equation}

Fixing any positive function $[0,\infty) \ni t \mapsto r_t$ such that both $r_t$ and $t-r_t$ tend to $\infty$ as $t \to \infty$ and letting
\begin{equation}
	L_t^* = \big\{x \in L_t :\: h_t(x) \geq h_t(y) \,, \forall y \in \rmB_{r_t} (x)\big\} \,,
\end{equation} 
the {\em structured extremal process} is then given as
\begin{equation}
\label{e:N6}
\textstyle
\wh{\cE}_t := \sum_{x \in L_t^*} \delta_{h_t(x) - m_t} \otimes \delta_{\cC_{t,r_t}(x)} \,.
\end{equation}
That is, $\wh{\cE_t}$ is a point process on $\bbR \times \bbM_p((-\infty,0])$, where $\bbM_p((-\infty,0])$ denote the space of  Radon point measures on $(-\infty, 0]$, which records {\em the-centered-heights-of} and {\em the-clusters-around} all $r_t$-local maxima of $h$. Then, it was (essentially) shown in~\cite{ABBS2013, ABK_E} that
\begin{equation}
\label{e:N7}
\big(\wh{\cE}_t, Z_t \big) \underset{t \to \infty}{\Longrightarrow} \big(\wh{\cE}, Z\big)\,,
\quad \text{where}\qquad 
\wh{\cE} \sim {\rm PPP}(Z\rme^{-\sqrt{2}u} \rmd u \otimes \nu) \,.
\end{equation}
In the above, $Z_t$ and $Z$ are as before and $\nu$ is a deterministic distribution on $\bbM_p((-\infty,0])$, which we will call the {\em cluster distribution}. The law of $\wh{\cE}$ should be interpreted as that of a point process realized by first drawing $Z$ and then, conditional on its value, sampling a PPP with intensity $Z\rme^{-\sqrt{2}u} \rmd u \otimes \nu$. Also, the weak convergence of the first coordinate above is w.r.t.~the vague topology on the space of Radon measures on $\bbR \times \bbM_p((-\infty,0])$, which is, in turn, equipped with the product of the Euclidean metric and the vague metric on $\bbM_p((-\infty,0])$.

As two consequences, one gets the convergence of the {\em extremal process of local maxima}:
\begin{equation}
\cE^*_t \underset{t \to \infty}\Longrightarrow \cE^*  \,,
\end{equation}
where
\begin{equation}
\label{e:5.5}
\cE_t^* := \sum_{x \in L^*_t} \delta_{h_t(x) - m_t} 
\quad , \qquad 
\cE^* := \sum_{(u, \cC) \in \wh{\cE}} \, \delta_u 
\,\sim\, {\rm PPP}(Z\rme^{-\sqrt{2}u} \rmd u) \,,
\end{equation}
and the convergence of the {\em usual} extremal process of $h$:
\begin{equation}
\cE_t \underset{t \to \infty}\Longrightarrow \cE  \,,
\end{equation}
where
\begin{equation}
\label{e:O.2}
\cE_t := \sum_{x \in L_t} \delta_{h_t(x) - m_t} 
\, , \qquad 
\cE
:= \sum_{k \geq 1} \, \cC^k(\cdot - u^k) \,,
\end{equation}
in which $u^1 > u^2 > \dots$ enumerate the atoms of $\cE^*$ and $(\cC^k)_{k \geq 1}$ are i.i.d.~chosen according to $\nu$ and independent of $\cE^*$. 

Since, conditional on $Z$, the intensity measure of $\cE^*$ is finite on $[-v, \infty)$ for every $v \in \bbR$ and tends to $\infty$ as $v \to \infty$, it is a standard fact that $\bbP(-|Z)$-a.s. and therefore also $\bbP$-a.s.,
\begin{equation}
\label{e:1.3}
	\frac{\cE^*([-v, \infty))}{\bbE \big(\cE^*([-v, \infty))\,\big|\,Z\big)} 
	= \frac{\cE^*([-v, \infty))}{\tfrac{1}{\sqrt{2}} Z \rme^{\sqrt{2}v}} 
	\underset{v \to \infty}{\longrightarrow} 1 \,.
\end{equation}

Asymptotics for $\cE([-v, \infty))$, which is arguably a more interesting quantity, is not however a straightforward consequence of~\eqref{e:5.5} and~\eqref{e:O.2}. This is because the limiting process $\cE$ is now a superposition of i.i.d.~clusters $\cC$, and the law $\nu$ of the latter will determine the number of points inside any given set in the overall process. To address this question, a study of the moments of the level sets of a cluster $\cC \sim \nu$ was carried out in~\cite{CHL19}. In particular, it was shown that  
as $v \to \infty$,
\begin{equation}
\label{e:29}
\bbE \cC([-v, 0]) = C_\star \rme^{\sqrt{2} v} (1+o(1)) \,.
\end{equation}
for some $C_\star > 0$. This was then combined with~\eqref{e:5.5} and~\eqref{e:O.2} to derive 
\begin{equation}
\label{e:1.10}
\frac{\cE([-v, \infty))}{C_\star Z v \rme^{\sqrt{2} v}} \overset{\bbP}{\underset{v \to \infty}{\longrightarrow}} 1 \,.
\end{equation}
This result was re-derived in~\cite{MRR22} using PDE techniques, which in addition allows one to identify the precise value of $C_\star$. Notice that compared to the asymptotics for the level sets of $\cE^*$, there is an additional linear prefactor (in $v$) in that of $\cE$, which is due to the contribution of the clusters.

\subsection{Results}
As the convergence in~\eqref{e:1.10} is in probability, a natural question raised following the appearance of~\cite{CHL19} was whether this can be strengthened to 
almost sure convergence, as is the case in~\eqref{e:1.3}. Our first result answers this question affirmatively.
\begin{thm}\label{1st_result} 
With $C_\star$ as in~\eqref{e:1.10}, almost surely,
	\begin{equation}\label{eq:1st_result}
	\frac{\cE([-v, \infty))}{C_\star Z v e^{\sqrt{2} v}} \underset{v \to \infty}\longrightarrow 1 \,.
	\end{equation}
\end{thm}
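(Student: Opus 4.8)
The plan is to upgrade the convergence in probability~\eqref{e:1.10} to an almost sure statement by exploiting the independence structure of $\cE$, conditionally on $Z$ and on the atoms $(u^k)$ of $\cE^*$. Write
\begin{equation}
\cE([-v,\infty)) = \sum_{k\geq 1} \cC^k\big([-v-u^k,\infty)\big) = \sum_{k\geq 1} N_k(v) \,,
\end{equation}
where, conditionally on $\cF_\infty := \sigma(Z, (u^k)_{k\geq 1})$, the $N_k(v)$ are independent with means $\mu_k(v) := \bbE\big(\cC([-v-u^k,\infty))\big)$. By~\eqref{e:29}, $\mu_k(v) = C_\star \rme^{\sqrt2 v}\rme^{\sqrt2 u^k}(1+o(1))$ for each fixed $k$ and (after controlling the $o(1)$ uniformly) summing against the intensity of $\cE^*$ gives $\sum_k \mu_k(v) \sim C_\star Z v\rme^{\sqrt2 v}$ almost surely — this is essentially the first-moment computation already behind~\eqref{e:1.10}, and should follow from~\eqref{e:29} together with the a.s.\ asymptotics~\eqref{e:1.3} for $\cE^*([-v,\infty))$ via Abel summation. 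So the heart of the matter is to show
\begin{equation}
\frac{\cE([-v,\infty)) - \sum_{k\geq 1}\mu_k(v)}{v\rme^{\sqrt2 v}} \underset{v\to\infty}\longrightarrow 0 \quad\text{a.s.}
\end{equation}

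The natural route is a second-moment/variance bound combined with a Borel--Cantelli argument along a discretized sequence $v_n$, together with a monotonicity (sandwiching) argument to fill the gaps, since $v\mapsto\cE([-v,\infty))$ is nondecreasing. For the variance, conditionally on $\cF_\infty$ one has $\Var\big(\cE([-v,\infty))\,\big|\,\cF_\infty\big) = \sum_k \Var(N_k(v)\mid\cF_\infty) \leq \sum_k \bbE\big(\cC([-v-u^k,\infty))^2\big)$, so one needs a good bound on the \emph{second} moment of a cluster level set, $\bbE\,\cC([-w,\infty))^2$, as $w\to\infty$. Here I would invoke the second-moment analysis of cluster level sets from~\cite{CHL19} (or redo it): one expects $\bbE\,\cC([-w,\infty))^2 = O(\rme^{2\sqrt2 w}/w)$ or at worst $O(\rme^{2\sqrt2 w})$ with a subleading gain, reflecting that the cluster, which is itself a decorated Poisson-type object with exponentially growing intensity $\propto \rme^{\sqrt2 u}\rmd u$ near $-\infty$ (up to the $3/2$-log correction responsible for the extra $v$ factor), has level-set counts that are not too spread out. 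Plugging in and summing $\sum_k \rme^{2\sqrt2 u^k}$ against the Poisson intensity of $\cE^*$ — which converges a.s.\ since $\int \rme^{2\sqrt2 u}\cdot\rme^{-\sqrt2 u}\rmd u$ converges at $-\infty$ — would give $\Var\big(\cE([-v,\infty))\mid\cF_\infty\big) = O(v\rme^{2\sqrt2 v})$ or better. Comparing this with the square of the normalization $(v\rme^{\sqrt2 v})^2$ yields a conditional Chebyshev bound of order $1/v$ (times a constant depending on $\cF_\infty$), which is summable along $v_n = n^{1+\eps}$ (or even $v_n$ linearly spaced if the bound is $o(v^{-1})$), so Borel--Cantelli applied under $\bbP(\cdot\mid\cF_\infty)$ gives a.s.\ convergence along $v_n$; averaging over $\cF_\infty$ gives the unconditional statement.

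To pass from the sequence $v_n$ to all $v$, I would use the monotonicity of $v\mapsto\cE([-v,\infty))$: for $v\in[v_n,v_{n+1}]$, sandwich $\cE([-v_n,\infty)) \leq \cE([-v,\infty)) \leq \cE([-v_{n+1},\infty))$ and divide by $C_\star Z v\rme^{\sqrt2 v}$; provided the mesh satisfies $v_{n+1}/v_n\to1$ and $\rme^{\sqrt2(v_{n+1}-v_n)}\to1$ — e.g.\ $v_n = \eps n$ — both bounds converge to $1$ and the squeeze closes. The main obstacle I anticipate is obtaining a second-moment estimate on cluster level sets that is sharp enough: a crude bound $\bbE\,\cC([-w,\infty))^2 = O(\rme^{2\sqrt2 w})$ only yields a conditional variance of order $v\rme^{2\sqrt2 v}$ and hence a Chebyshev bound of order $1/v$, which is borderline and forces a superlinear sub-sequence $v_n$, in which case the gap-filling step needs the ratio $v_{n+1}/v_n\to1$ to be checked carefully (it holds for $v_n = n^{1+\eps}$ but one must confirm $\rme^{\sqrt2(v_{n+1}-v_n)}\to1$, which it does not if the gaps grow). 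The clean fix is to establish a genuine gain, $\bbE\,\cC([-w,\infty))^2 = o(\rme^{2\sqrt2 w})$ or $O(\rme^{2\sqrt2 w}/w)$, most plausibly via the cluster decomposition used in~\cite{CHL19} where $\cC$ is described through a Poisson process of ``small'' clusters along a path, which would make the variance $O(\rme^{2\sqrt2 v})$ and the Chebyshev bound $O(v^{-2})$, comfortably summable along linearly spaced $v_n$; controlling this second moment — and the uniformity of the $o(1)$ in~\eqref{e:29} needed for the first-moment asymptotics — is where the real work lies, the rest being standard Borel--Cantelli plus monotonicity.
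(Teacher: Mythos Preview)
Your framework (conditional second moment, Chebyshev, Borel--Cantelli along a lattice, monotonicity to fill the gaps) is exactly the scaffolding the paper uses. The genuine gap is the second-moment input you hope for: the bound $\bbE\,\cC([-w,0])^2 = o(\rme^{2\sqrt{2}w})$ is \emph{false}. As the paper recalls from~\cite{CHL20}, with probability $\Theta(w^{-1})$ the cluster level set is atypically large, $\cC([-w,0]) = \Theta(w\rme^{\sqrt{2}w})$; this alone forces $\bbE\,\cC([-w,0])^2 \gtrsim w\,\rme^{2\sqrt{2}w}$. Plugging this into your sum $\sum_k \bbE\cC([-v-u^k,0])^2$ gives a conditional variance of order $v\,\rme^{2\sqrt{2}v}$ (the sum $\sum_k (v+u^k)\rme^{2\sqrt{2}u^k}$ is a.s.~$\Theta(v)$, dominated by the finitely many tips near $0$), hence a Chebyshev bound of exact order $1/v$. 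You correctly flag this as borderline, but there is no rescue: any subsequence $(v_n)$ with $\sum_n v_n^{-1}<\infty$ must have gaps $v_{n+1}-v_n\to\infty$, so $\rme^{\sqrt{2}(v_{n+1}-v_n)}\to\infty$ and the monotonicity sandwich fails.

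The paper's resolution is to split the sum at tips $u^k = -\delta\log v$ for some $\delta\in(0,1/\sqrt{2})$. For tips below $-\delta\log v$, the variance (conditional on $Z$ only, not on the $u^k$) picks up an extra factor $\rme^{-\sqrt{2}\delta\log v}=v^{-\sqrt{2}\delta}$ (see Lemma~\ref{l:3.1}, eq.~\eqref{eq:cluster_2nd_moment_estimate}), making Chebyshev summable along $v\in\eps\bbN$; your argument then goes through verbatim for this piece. For the finitely many tips above $-\delta\log v$, moments are useless precisely because of the heavy tail above; instead one needs the \emph{uniform-in-$w$} estimate
\[
\bbP\Big(\exists\, w\geq u:\ \cC([-w,0]) \geq \eps w^{-K}\rme^{\sqrt{2}w}\Big) \lesssim u^{-1}(\log u)^{5/2}
\]
(Lemma~\ref{key_lemma}), proved via the weak cluster representation and an entropic-repulsion argument for the spine. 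Summing this over the tips in $[-\delta\log v,\infty)$ against the Poisson intensity shows (Corollary~\ref{key_lem_cor}) that only finitely many such clusters ever violate the growth bound, so their total contribution is a.s.~$o(v^{-K}\rme^{\sqrt{2}v})$. This non-moment step is the missing idea in your plan.
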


Another question that was raised concerns the typical asymptotic growth of the size of the cluster level sets $\cC([-v, 0])$. As was pointed out in~\cite{CHL20}, the first moment asymptotics in~\eqref{e:29} {\em do not} capture the asymptotic typical size of level set $[-v, 0]$, as the mean is the result of an unusually large level set size, occurring with a small (vanishing with $v$) probability. Nevertheless, no study of this question beyond moment analysis was carried out in that work.

This problem was addressed in the physics literature by~\cite{Munier2, Munier1}, where based on simulations and heuristic arguments the authors predicted that 
\begin{equation}
	\cC([-v,0]) \approx \rme^{\sqrt{2}v - \Theta(v^{2/3})} \,,
\end{equation}
where $\Theta(v^{2/3})$ is a random quantity of order $v^{2/3}$ whose law (scaled by $v^{2/3}$) was roughly conjectured, and $\approx$ means that the logs of both sides are asymptotically equivalent as $v \to \infty$. Our second result proves this conjecture and identifies the law of this random quantity.
\begin{thm}\label{2nd_result} Let $\cC \sim \nu$. There exists an almost surely positive and finite random variable $\zeta$ such that
	\begin{equation}\label{eq:2nd_result}
		\frac{\log \cC([-v, 0]) -\sqrt{2} v }{v^{\frac{2}{3}}} \underset{v \to \infty} \Longrightarrow -\zeta \,.
	\end{equation}
Moreover, the limit $\zeta$ may be realized as
\begin{equation}\label{chi_def}
	\zeta := \inf_{s > 0} \Big(\sqrt{2}\, Y_s + \frac{1}{2s}\Big) \,,
\end{equation}
where $(Y_s)_{s \geq 0}$ is a Bessel-3 process starting from $0$.
\end{thm}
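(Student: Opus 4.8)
The plan is to combine the spine (``backbone'') decomposition of the cluster law $\nu$ with a sharp large‑deviation estimate for the level‑set sizes of an ordinary BBM; together these reduce the statement to a one‑dimensional variational problem whose form is then pinned down exactly by Brownian scaling. Following the description of the cluster measure seen from the tip of BBM (cf.~\cite{ABBS2013,ABK_E}), realize $\cC\sim\nu$, in the frame where its top atom sits at $0$, as follows: there is a distinguished ancestral line (the \emph{spine}) which at genealogical depth $s$ below the top lies at height $-\sqrt 2\,s-R_s$, where $(R_s)_{s\ge 0}$ is a Bessel‑3 process started from $0$; off the spine, at a point process of depths $s$, there branch i.i.d.\ BBMs of duration $s$, whose $m_s$‑centred extremal processes we write $\cE^{(s,i)}$. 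Then, up to an $O(\log s)$ shift coming from $m_s$ versus $\sqrt 2\,s$,
\begin{equation}\label{eq:plan-decomp}
\cC([-v,0]) \;=\; 1+\sum_{s,\,i}\cE^{(s,i)}\big([-(v-R_s)+O(\log s),\,\infty)\big)\,,
\end{equation}
so that everything rests on the typical size of $N_s:=\cE_s([-\delta,\infty))$ for a duration‑$s$ BBM, with $\delta=\delta(s)=v-R_s$ possibly much larger than $\sqrt s$, followed by an optimization over $s$.

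The first ingredient I would prove is that, with high probability and uniformly over the relevant range of depths (those $s$ with $0<\delta<\sqrt 2\,s$, say $s\le v^3$),
\begin{equation}\label{eq:plan-bbm}
\log N_s \;=\; \sqrt 2\,\delta-\frac{\delta^2}{2s}+O\big(\log(\delta s)+|\log Z_s|\big)\,,
\end{equation}
$Z_s$ being the derivative‑martingale limit of the sub‑BBM. For $s\gg\delta^2$ the ballot term $\delta^2/(2s)$ is negligible and \eqref{eq:plan-bbm} merely reasserts the $\sqrt 2\,\delta+O(\log\delta)$ growth underlying \eqref{e:29} --- for the relevant sub‑BBM this is a finite‑time, quantitative‑in‑$s$ version of Theorem~\ref{1st_result}; for $\delta\lesssim s\lesssim\delta^2$ it is the sharp count of the number of BBM particles at height $\approx(\sqrt 2-\delta/s)s$, obtained from the many‑to‑one bound $\bbE N_s\le s^{3/2}e^{\sqrt 2\,\delta-\delta^2/(2s)}$ together with Markov and a union bound over the (polynomially many) branch points for the upper inequality, and from a truncated second‑moment computation for the lower one. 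The conditioning that each sub‑BBM stay below the tip, and the polynomial number of branch points at depths $\le v^3$, are absorbed into the error terms. Since \eqref{eq:plan-decomp} has only $\mathrm{poly}(v)$ non‑zero summands once one discards (via standard Bessel‑3 hitting estimates) the depths with $\delta\le 0$, and since a sum of $\mathrm{poly}(v)=e^{o(v^{2/3})}$ non‑negative terms equals its largest up to a factor $e^{o(v^{2/3})}$, I obtain, w.h.p.,
\begin{equation}\label{eq:plan-var}
\log\cC([-v,0]) \;=\; \sqrt 2\,v-\min_{s>0}\Big[\sqrt 2\,R_s+\frac{(v-R_s)^2}{2s}\Big]+o(v^{2/3}) \;=\; \sqrt 2\,v-\min_{s>0}\Big[\sqrt 2\,R_s+\frac{v^2}{2s}\Big]+o(v^{2/3})\,,
\end{equation}
the last step because the minimum is attained at the balancing scale $s\asymp v^{4/3}$ (there $\sqrt 2\,R_s\asymp v^{2/3}\asymp v^2/(2s)$, while the dropped cross terms $vR_s/s\asymp v^{1/3}$ and $R_s^2/(2s)\asymp 1$ are $o(v^{2/3})$), and because the scales $s\ll v^{4/3}$ (where $v^2/(2s)\gg v^{2/3}$) and $s\gg v^{4/3}$ (where $\sqrt 2\,R_s\gg v^{2/3}$, using e.g.\ that $\bbP(\inf_{s\ge Kv^{4/3}}R_s<v^{2/3})\to 0$ as $K\to\infty$, uniformly in $v$) do not compete.

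By the scaling invariance of Bessel‑3, $(R_{v^{4/3}\sigma})_{\sigma\ge 0}\laweq(v^{2/3}Y_\sigma)_{\sigma\ge 0}$ with $Y$ a Bessel‑3 from $0$; substituting $s=v^{4/3}\sigma$,
\begin{equation}\label{eq:plan-scale}
\min_{s>0}\Big[\sqrt 2\,R_s+\frac{v^2}{2s}\Big] \;\laweq\; \min_{\sigma>0}\Big[\sqrt 2\,v^{2/3}Y_\sigma+\frac{v^{2/3}}{2\sigma}\Big] \;=\; v^{2/3}\inf_{\sigma>0}\Big(\sqrt 2\,Y_\sigma+\frac{1}{2\sigma}\Big) \;=\; v^{2/3}\,\zeta\,,
\end{equation}
and one checks elementarily that $\zeta$ is a.s.\ in $(0,\infty)$ (finite by testing a fixed $\sigma$; positive since $Y_\sigma+\frac1{2\sqrt 2\,\sigma}\to\infty$ as $\sigma\downarrow0$ while $\inf_{\sigma\ge\eps}Y_\sigma>0$ a.s.) and that the infimum defining it is a.s.\ attained. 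Combining \eqref{eq:plan-var} and \eqref{eq:plan-scale} yields \eqref{eq:2nd_result}. For the lower bound in \eqref{eq:2nd_result} one then only needs a single good depth $s_\star\asymp v^{4/3}$ at which $R_{s_\star}$ realizes (the rescaling of) the minimizer in \eqref{eq:plan-scale} --- available because that infimum is attained --- and at which the corresponding sub‑BBM attains its typical level‑set size, which is one application of the second moment method inside that sub‑BBM.

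The main obstacle is the upper bound: making \eqref{eq:plan-bbm} hold \emph{uniformly} across the whole range of $s$, in particular cleanly across the crossover $s\asymp\delta^2$ (which requires upgrading Theorem~\ref{1st_result} to a finite‑time statement with explicit control of how large $s$ must be relative to $\delta$), and, crucially, controlling the upper tail of the random sum in \eqref{eq:plan-decomp} --- excluding that an atypically large $Z_s$, a rare overshoot of some sub‑BBM, or the accumulation of many moderately large summands pushes $\cC([-v,0])$ above $e^{\sqrt 2\,v-(\zeta-\eps)v^{2/3}}$. This calls for a second‑moment‑type bound on $\sum_{s,i}N_{s,i}$ combined with a Bessel‑3 small‑deviation estimate for $\min_{s}[\sqrt 2\,R_s+v^2/(2s)]$. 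Once these are in place, everything downstream of \eqref{eq:plan-bbm} is a careful optimization plus the exact scaling identity \eqref{eq:plan-scale}.
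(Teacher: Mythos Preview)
Your proposal is essentially correct and follows the same route as the paper: backbone decomposition with a Bessel-3 spine, the level-set estimate $\log\cE_s([-\delta,\infty))\approx\sqrt 2\,\delta-\delta^2/(2s)$, reduction to the variational problem $\min_s(\sqrt 2\,R_s+v^2/(2s))$, and identification of the limit via Brownian scaling. The paper packages the first step as a separate ``strong representation'' theorem (Theorem~\ref{p:1}), which gives the coupling of the backbone to $-Y_s-\tfrac{3}{2\sqrt2}\log^+s$ in total variation on $[r,\infty)$; this replaces your informal appeal to~\cite{ABBS2013} and is what makes the argument go through cleanly, since one needs control of the spine over an \emph{unbounded} time window $[0,v^{4/3}K]$ that grows with $v$.

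One point where you overcomplicate: the upper bound does \emph{not} require a second-moment bound on the random sum, nor any control on rare overshoots of individual sub-BBMs or of their $Z_s$. The paper simply conditions on the backbone and branching times $(\wc W,\wc\cN)$, restricts to a high-probability ``nice'' event $\cV^{(v)}$ on which the Bessel coupling holds and $\wc\cN$ is well-behaved, and then bounds the \emph{conditional first moment} of the whole sum: by Lemma~\ref{1st_moment} each summand has conditional mean at most $s^C\exp(\sqrt2(v+\wc W_s)-(v+\wc W_s)^2/(2s))$, and summing these over $\wc\cN$ yields at most $\exp(\sqrt2\,v-v^{2/3}\zeta^{(v)}+O(v^{1/3}))$ on $\cV^{(v)}$. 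A single Markov inequality then gives the upper tail with room to spare. Your proposed many-to-one plus Markov at each depth, followed by a union bound, would also work but is less direct. For the lower bound the paper does exactly what you describe: pick one branch time $\sigma_*$ near the minimizer and apply a truncated second-moment (Paley--Zygmund) bound inside that single sub-BBM.
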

We remark that the law of $\zeta$ differs from its conjectured form in~\cite{Munier2}. This comes from an intrinsic fine scale optimization problem which was not visible in the latter work.
As an immediate corollary, one gets
\begin{cor}
Let $\cC \sim \nu$.
\begin{equation}
	\frac{\log \cC([-v, 0])}{\sqrt{2} v} \overset{\bbP}{\underset{v\to\infty}{\longrightarrow}}
	1 \,.
\end{equation}
\end{cor}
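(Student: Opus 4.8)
The plan is to deduce the corollary directly from Theorem~\ref{2nd_result}. For $v > 0$ set
\begin{equation}
R_v := \frac{\log \cC([-v,0]) - \sqrt{2} v}{v^{2/3}} \,,
\end{equation}
so that Theorem~\ref{2nd_result} reads $R_v \Longrightarrow -\zeta$ as $v \to \infty$, with $\zeta$ almost surely positive and finite. Dividing through by $\sqrt{2}v$ gives the exact identity
\begin{equation}
\frac{\log \cC([-v,0])}{\sqrt{2} v} = 1 + \frac{R_v}{\sqrt{2}\, v^{1/3}} \,, \qquad v > 0 \,,
\end{equation}
so it suffices to prove that $R_v / v^{1/3} \to 0$ in probability.

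First I would record that $(R_v)_{v>0}$ is tight: since $\zeta$ is almost surely finite, for every $\eta > 0$ there is $M = M(\eta) < \infty$ with $\bbP(\zeta \geq M) < \eta$, and because $\{x :\: |x| \geq M\}$ is closed the portmanteau theorem gives $\limsup_{v \to \infty} \bbP(|R_v| \geq M) \leq \bbP(|{-\zeta}| \geq M) = \bbP(\zeta \geq M) < \eta$. Next, fix $\eps > 0$; for all $v$ large enough that $\sqrt{2}\, \eps\, v^{1/3} \geq M$ we have
\begin{equation}
\bbP\Bigl( \Bigl| \frac{R_v}{\sqrt{2}\, v^{1/3}} \Bigr| > \eps \Bigr)
= \bbP\bigl( |R_v| > \sqrt{2}\, \eps\, v^{1/3} \bigr)
\leq \bbP\bigl( |R_v| \geq M \bigr) \,,
\end{equation}
whence $\limsup_{v \to \infty} \bbP\bigl( |R_v / (\sqrt{2}\, v^{1/3})| > \eps \bigr) \leq \eta$. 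As $\eta > 0$ was arbitrary this shows $R_v / v^{1/3} \to 0$ in probability, and substituting into the identity above completes the proof.

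There is no genuine obstacle here: all the work is contained in Theorem~\ref{2nd_result}, and the remaining content is simply that an $O_{\bbP}(v^{2/3})$ fluctuation is negligible relative to the $\sqrt{2}v$ leading term after dividing by the latter. The only mild subtlety is that convergence in distribution does not by itself give almost sure or $L^1$ control of $R_v$, so one routes the argument through tightness rather than through a pointwise bound.
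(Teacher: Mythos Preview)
Your proof is correct and is exactly the argument the paper has in mind: the corollary is stated there as ``an immediate corollary'' of Theorem~\ref{2nd_result} with no further proof, and what you have written is precisely the routine deduction---convergence in law of $R_v$ to an a.s.\ finite limit gives tightness, hence $R_v/v^{1/3}\to 0$ in probability.
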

In fact, a straightforward modification of the proof of Theorem~\ref{2nd_result} gives
\begin{prop}
\label{p:2}
Let $\cC \sim \nu$. Then, almost surely, 
\begin{equation}
	\frac{\log \cC([-v, 0])}{\sqrt{2} v} {\underset{v\to\infty}{\longrightarrow}}
	1 \,.
\end{equation}
\end{prop}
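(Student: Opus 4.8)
The plan is to establish the two inequalities $\limsup_{v\to\infty} \tfrac{\log \cC([-v,0])}{\sqrt2 v} \le 1$ and $\liminf_{v\to\infty} \tfrac{\log \cC([-v,0])}{\sqrt2 v} \ge 1$ separately, the former by a direct first--moment estimate and the latter by extracting a quantitative lower bound from the proof of Theorem~\ref{2nd_result}. Throughout I would use two elementary facts: $v \mapsto \cC([-v,0])$ is non-decreasing, and $\cC([-v,0]) \in [1,\infty)$ almost surely --- it is at least $1$ since the tip of the cluster sits at $0 \in [-v,0]$, and finite since $\cC$ is a Radon point measure and $[-v,0]$ is compact --- so that $\log \cC([-v,0]) \in [0,\infty)$, and monotonicity lets me pass from a subsequence $v_k$ to all $v$.

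For the upper bound, fix $\eps>0$. By~\eqref{e:29} there is $n_0$ such that $\bbE\, \cC([-n,0]) \le 2 C_\star \rme^{\sqrt2 n}$ for all integers $n \ge n_0$, so Markov's inequality gives $\bbP\big(\cC([-n,0]) \ge \rme^{(\sqrt2+\eps)n}\big) \le 2 C_\star \rme^{-\eps n}$, which is summable in $n$. By Borel--Cantelli, almost surely $\log \cC([-n,0]) < (\sqrt2+\eps)(n+1)$ for all large $n$. For $v \in [n,n+1]$, monotonicity gives $\log \cC([-v,0]) \le \log \cC([-(n+1),0]) < (\sqrt2+\eps)(n+1)$, and dividing by $\sqrt2 v \ge \sqrt2 n$ and letting $v \to \infty$ yields $\limsup_v \tfrac{\log\cC([-v,0])}{\sqrt2 v} \le \tfrac{\sqrt2+\eps}{\sqrt2}$. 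Since $\eps$ is arbitrary, $\limsup_v \tfrac{\log\cC([-v,0])}{\sqrt2 v} \le 1$ almost surely.

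For the lower bound I would revisit the proof of Theorem~\ref{2nd_result}. The lower half of that argument --- matching $\log \cC([-v,0])$ with $\sqrt2 v - \zeta v^{2/3}$ from below --- must rest on a large--deviation estimate to the effect that the cluster is unlikely to be much smaller than $\rme^{\sqrt2 v}$, and I expect it to yield, or to be a routine strengthening away from yielding, a bound of the form $\bbP\big(\cC([-v,0]) \le \rme^{\sqrt2 v - \lambda v^{2/3}}\big) \le g(\lambda)$ valid for all $v \ge v_0$ and all $\lambda \le c v^{1/3}$, with $g(\lambda) \to 0$ as $\lambda \to \infty$. (In view of~\eqref{chi_def} and the Gaussian tail of a Bessel-3 process one should be able to take $g(\lambda) = C\rme^{-c\lambda^3}$, but a merely polynomial rate suffices below; note that the statement of Theorem~\ref{2nd_result} alone does not suffice, since it controls this probability only for fixed $\lambda$ with $v \to \infty$, whereas here $\lambda$ must grow with $v$.) Applying this with $v = v_k := (1+\delta)^k$ and $\lambda = \eps v_k^{1/3}$, legitimate once $\eps \le c$, gives $\sum_k \bbP\big(\log \cC([-v_k,0]) \le (\sqrt2-\eps)v_k\big) < \infty$, so by Borel--Cantelli, almost surely $\log \cC([-v_k,0]) > (\sqrt2-\eps)v_k$ for all large $k$. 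For general large $v$ I pick $k$ with $v \in [v_k, v_{k+1}]$ and use monotonicity: $\log \cC([-v,0]) \ge \log \cC([-v_k,0]) > (\sqrt2-\eps)v_k \ge (\sqrt2-\eps)\tfrac{v}{1+\delta}$, whence $\liminf_v \tfrac{\log\cC([-v,0])}{\sqrt2 v} \ge \tfrac{\sqrt2-\eps}{\sqrt2(1+\delta)}$; letting $\delta \downarrow 0$ and then $\eps \downarrow 0$ gives $\liminf \ge 1$, which combined with the upper bound proves the proposition.

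The only genuinely non-trivial ingredient is thus the uniform large--deviation lower bound on $\cC([-v,0])$ below $\rme^{\sqrt2 v}$ at deviation scales $\lambda$ that grow polynomially with $v$. This is exactly the mechanism underlying the $-\zeta$ lower tail in Theorem~\ref{2nd_result}, recorded there only in the limit, and the "straightforward modification" consists in keeping track of the constants in that part of the proof. I expect this to be, if anything, easier at the coarse scale $\sqrt2 v$ than at the fine scale $v^{2/3}$: one only needs to produce at least $\rme^{(\sqrt2-\eps)v}$ particles of $\cC$ above level $-v$, which should follow from crude lower bounds on the number of branching Brownian motion particles in a neighborhood of the front of an appropriate sub-tree of $\cC$ run for a time of order $v$, without any need to identify the sharp constant in front of $v^{2/3}$; in particular the delicate fine--scale optimization in~\eqref{chi_def} plays no role here.
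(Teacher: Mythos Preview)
Your upper bound via~\eqref{e:29}, Markov, and Borel--Cantelli is correct and in fact cleaner than the paper's, which re-derives a first-moment estimate by summing over branching times in the strong representation of Theorem~\ref{p:1}. Both routes yield a summable bound along integers and then use monotonicity.

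For the lower bound, you and the paper agree that one must go back into the proof of Theorem~\ref{2nd_result}, but the structures differ. You posit an \emph{unconditional} bound $\bbP\big(\cC([-v,0]) \le \rme^{\sqrt{2}v - \lambda v^{2/3}}\big) \le g(\lambda)$, uniform in $v$ and with $g(\lambda)\to 0$, to plug into Borel--Cantelli along a geometric sequence at $\lambda=\eps v_k^{1/3}$. The paper instead fixes a single event $\cV := \cV^{(1)}(\eps,K,\Lambda)$ of probability $\ge 1-\eps$, \emph{independent of $v$}, on which the backbone $\wc{W}$ coincides with the shifted negative Bessel process $\wc{Y}$ for all $s\ge K$ and obeys the envelope $Y_s \le 3(s\log\log s)^{1/2}$; it then shows that on $\cV$ the \emph{conditional} probability of $\{\log\wc{\cC}([-v,0]) < \sqrt{2}v - \eps v^{2/3}\log\log v\}$ decays like $C\rme^{-cv^{1/3}}$, whence Borel--Cantelli and monotonicity give a.s.\ convergence on $\cV$, and finally $\eps\to 0$.

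The paper does not take your route because two of the needed ingredients --- the coupling $\wc{W}_s=\wc{Y}_s$ for $s\ge K$, and the LIL upper envelope for $Y$ --- are available only on a set of probability $1-\eps$, with no rate in $v$ for the complement (the TV convergence in~\eqref{e:2.3} carries no quantitative decay). Hence the unconditional tail you want carries, a priori, a residual $\eps$-term that is not summable over $k$. This is not a fatal gap: you obtain a correct proof by swapping your unconditional Borel--Cantelli for the paper's scheme of working on a fixed high-probability event and letting $\eps\to 0$ at the end. But as written, the clean bound $g(\lambda)\to 0$ does not fall out of the proof of Theorem~\ref{2nd_result} without this structural adjustment.
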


In order to prove Theorem~\ref{2nd_result}, we develop a representation of the cluster law $\nu$ in which the cluster $\cC$ is obtained as the \emph{almost sure} limit of certain point processes, all defined on the same space. This is in contrast to the representation in~\cite{CHL19,CHL20} (as well as that in~\cite{ABK_E}), in which the law $\nu$ is obtained by taking a \emph{weak} limit of objects, which are defined on different underlying spaces, and with respect to a conditional measure, where the conditioning becomes singular in the limit. Moreover, the point processes involved in our representation are defined in terms of well-controlled objects: 
\begin{itemize}
	\item a Bessel-3-like ``backbone'' process $\wc{\cW}$,
	\item a Poisson-like point process of time stamps $\wc{\cN}$,
	\item a collection $\wc{\cE}$ of extremal processes for independent BBM conditioned on their maximum staying below a typical value.
\end{itemize}  
We shall henceforth refer to this new representation as {\em strong}, and to the one in~\cite{CHL19,CHL20} as {\em weak}, noting that this designation refers to the type of limit taken in it.

In what follows, $C([0, \infty))$ is the space of continuous functions on $[0, \infty)$ endowed with the topology of uniform convergence on compact sets. Also, given a function $f$ and a subset $A$ of its domain, we write $f_A$ for the restriction of $f$ to $A$.
\begin{thm}
\label{p:1} 
There exists a process
$(\wc{W}, \wc{\cN}, \wc{\cE})$
taking values in 
\begin{equation}
	C([0, \infty)) \times \bbM_p([0, \infty)) \times \bbM_p((-\infty,0])^{\bbR_+\times \bbR} \,,
\end{equation} 
such that for
\begin{equation}
	\wc{\cC}_r := \int_0^r \wc{\cE}_{s, \wc{W}_s} \, \wc{\cN}(\rmd s) \,,
\end{equation}
the limit 
\begin{equation}\label{eq:C_limit_r_to_infty}
	\wc{\cC} := \lim_{r \to \infty} \wc{\cC}_r
	\equiv \int_0^\infty \wc{\cE}_{s, \wc{W}_s} \, \wc{\cN}(\rmd s)
\end{equation}
holds pointwise (and therefore vaguely) almost surely and obeys 
\begin{equation}
	\wc{\cC} \sim \nu \,,	
\end{equation}
where $\nu$ is the cluster law. Moreover, the following holds:
\begin{enumerate}   
\item $(\wc{\cE}_{s,y})_{s,y}$ are independent and also independent of $(\wc{W}, \wc{\cN})$, and for all $s \geq 0$, $y \in \bbR$, 
    \begin{equation}
    \label{e:101.12}
	\wc{\cE}_{s,y} \overset{\rmd}= \cE_s(-y + \cdot) \,\big|\, \big\{\cE_s((-y, \infty)) = 0\big\}\,,
\end{equation}
where $\cE_s$ is as in~\eqref{e:O.2}. 

\item For $(Y_s)_{s \geq 0}$ a Bessel-3 process, $\wc{Y}_s := -Y_s - \frac{3}{2\sqrt{2}} \log^+s$, and any $y_0 \geq 0$,
\begin{equation}
\label{e:2.3}\
	\Big\|\bbP \big(\wc{W}_{[r, \infty)} \in \cdot) - \bbP \big(\wc{Y}_{[r, \infty)} \in \cdot\,\big|\, \wc{Y}_0 = y_0 \big) \Big\|_{\rm TV} \underset{r \to \infty} \longrightarrow 0 \,.
\end{equation}
\item There exists $C,c \in (0,\infty)$ and for all $\epsilon > 0$ also $C, c' \in (0,\infty)$ such that for all $s,u \geq 0$,
\begin{equation}
\label{e:101.14}
	\bbP\big(\inf \big\{|\sigma - s| :\: \sigma \in \wc{\cN}\big\} > u\big) \leq Ce^{-cu} 
\quad , \qquad
\bbP \big(\wc{\cN}([0, s]) > (2+\epsilon) s) \leq C'e^{-c's} \,.
\end{equation}
\end{enumerate}
\end{thm}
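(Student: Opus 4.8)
The plan is to construct the process $(\wc W, \wc{\cN}, \wc{\cE})$ by unpacking the ``weak'' description of the cluster law $\nu$ from~\cite{ABK_E,CHL19,CHL20} and tracking, down the spine of a BBM conditioned to produce an extreme value, the three ingredients listed: the centered spine trajectory, the branching times along it, and the conditioned sub-BBM extremal processes emitted at those times. More precisely, recall that $\nu$ arises from the decomposition of the limiting extremal process $\cE$ near one of its local maxima: one runs a BBM of length $t$ conditioned so that the maximum is at height $m_t$, locates the spine particle realizing it, and records the displacements $h_t(y)-h_t(x)$ of all other leaves $y$. Each such $y$ branched off the spine at some time $s$ from the root (equivalently at some backward time $t-s$ from the leaf) from a particle at spine-height $\wc W_s := h_s(\mathrm{spine}_s) - \sqrt 2 s$ (suitably $\log$-corrected), and the cluster of leaves descending from that branch point forms, after centering, a copy of the extremal process $\cE_{t-s}$ of an independent BBM of length $t-s$, conditioned on the event that none of those leaves exceed the spine value; passing $t\to\infty$, $t-s\to\infty$ for each fixed $s$, and invoking the convergence~\eqref{e:N7}, the limiting family is exactly $(\wc{\cE}_{s,y})$ with the law in~\eqref{e:101.12}. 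The branching times along the spine form a rate-$2$ Poisson process (each binary branching event along the size-biased spine contributes), modified by the conditioning, which after the limit yields $\wc{\cN}$. The representation $\wc{\cC}_r=\int_0^r \wc{\cE}_{s,\wc W_s}\,\wc{\cN}(\mathrm d s)$ is then just the bookkeeping that the cluster is the superposition of the emitted sub-clusters, and $\wc{\cC}=\lim_{r\to\infty}\wc{\cC}_r$ because each test set $[-v,0]$ receives contributions only from finitely many $s$ (heights $\wc W_s\to-\infty$, so for $s$ large the conditioned sub-extremal process charges $[-v,0]$ with a probability summable in $s$ by a Borel--Cantelli argument).

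The steps I would carry out, in order. \emph{Step 1: spinal decomposition.} Write down the many-to-one / spine change of measure for BBM conditioned on $\{h^*_t = m_t + O(1)\}$ (e.g. the Bramson--Lalley--Sellke picture, or the ``$t$-to-$\infty$'' version in~\cite{ABK_E}): under the tilted measure the spine is a Brownian motion conditioned to stay below the curve $\sqrt 2 s - $ (log correction) and to be absorbed near $m_t$ at time $t$, branching at rate $2$, with each branch subtree an independent BBM conditioned to stay below the spine. \emph{Step 2: identify the three objects at finite $t$ and take the limit.} Define $\wc W^{(t)}$ as the centered spine, $\wc{\cN}^{(t)}$ as its branch times counted from the leaf backwards, and $\wc{\cE}^{(t)}_{s,\cdot}$ as the centered conditioned extremal process of the subtree of backward-age $s$. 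Show joint convergence in distribution of $(\wc W^{(t)}, \wc{\cN}^{(t)}, \wc{\cE}^{(t)})$ as $t\to\infty$ to a limit $(\wc W, \wc{\cN}, \wc{\cE})$ on the stated product space, using: the Bessel-3 description of the time-reversed spine of BBM conditioned to be large (this is the standard fact that the spine, reversed from the tip, behaves like $-Y_s$ plus the $-\tfrac{3}{2\sqrt2}\log^+ s$ correction, giving~\eqref{e:2.3}); the fact that branch times along the spine are asymptotically a decorated Poisson process, giving the two bounds in~\eqref{e:101.14}; and~\eqref{e:N7} together with the absolute continuity of ``conditioned to stay below a barrier at distance $\asymp \sqrt 2 s + y$'' to get~\eqref{e:101.12}. \emph{Step 3: realize everything on one space and prove a.s.\ convergence.} Construct $(\wc W,\wc{\cN},\wc{\cE})$ directly as the limit object (Bessel-3 for $\wc W$, an explicit Poisson-like $\wc{\cN}$ independent structure, and the i.i.d.\ conditioned extremal processes for $\wc{\cE}$), define $\wc{\cC}_r$ and show $\wc{\cC}_r([-v,0])$ is eventually constant in $r$ a.s.: by parts~(2)--(3), $\wc W_s\le -\sqrt2 s/2$ say for all large $s$, and $\bbP(\wc{\cE}_{s,\wc W_s}([-v,0])>0\mid \wc W)$ is controlled using~\eqref{e:101.12} and the known tail $\bbE\cC([-v,0])=C_\star e^{\sqrt 2 v}(1+o(1))$ of~\eqref{e:29} evaluated at effective level $v+\wc W_s$, which is summable over $s$; Borel--Cantelli then gives pointwise-in-$v$ (hence vague) a.s.\ convergence. \emph{Step 4: identify the limit law.} Check that the law of $\wc{\cC}$ so constructed equals $\nu$, by matching its Laplace functional against the one computed from~\eqref{e:N7} (equivalently, that the finite-$t$ approximations $\wc{\cC}^{(t)}_{r_t}$ converge in distribution both to $\cC\sim\nu$, via the definition of $\nu$, and to $\wc{\cC}$, via Steps 2--3).

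I expect the main obstacle to be \textbf{Step 2}, specifically establishing the joint convergence of the three objects with \emph{quantitative} control strong enough to later justify the a.s.\ construction and the interchange of limits in Step 3. Individually the ingredients are classical: the time-reversed spine $\to$ Bessel-3 (this underlies Bramson's analysis and appears in many treatments of BBM maxima), the Poisson structure of spine branch times, and the ABK cluster convergence~\eqref{e:N7}. The difficulty is that these must be coupled: the subtree conditionings depend on the realized spine path $\wc W$, and near the tip (small $s$) the conditioning barrier is at distance $O(1)$, which is exactly the regime where the ABK limit lives, so one cannot simply treat the subtrees as unconditioned. Making~\eqref{e:101.12} precise therefore requires a careful barrier-removal / absolute-continuity estimate showing that conditioning an age-$s$ sub-BBM to stay below $\sqrt 2 s + \wc W_s + (\cdot)$ is, in the limit, the clean conditioning $\{\cE_s((-\wc W_s,\infty))=0\}$ — uniformly enough in $s$ and in the spine path that the resulting family is jointly well-defined and the sum over $s$ converges. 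A secondary technical point is proving the TV statement~\eqref{e:2.3} rather than mere weak convergence of $\wc W_{[r,\infty)}$; this should follow from coupling the conditioned spine with a genuine Bessel-3 process after time $r$ using that the conditioning event becomes asymptotically certain, but the coupling must be done carefully to get total variation rather than a weaker metric.
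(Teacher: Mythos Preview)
Your outline matches the paper's approach in spirit, but you over-complicate Step~2 and make a factual slip in Step~3.

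For Step~2, the paper does \emph{not} take a limit of the decoration family $\wc{\cE}^{(t)}$. The key observation is that, already at finite $t$, the conditional law of $(\cE_s^s(\cdot - \wh W_{t,s}))_{s \in \cN\cap[0,r]}$ given $(\wh W_{t,\cdot}, \cN)$ under $\bbP_{0,0}^{t,0}(\cdot \mid \cA_t)$ is \emph{exactly} the product of the laws in~\eqref{e:101.12}: the event $\cA_t$ factors over $s \in \cN$ into events depending only on $(\wh W_{t,s}, h^s)$, so conditioning on it plus the backbone and timestamps simply forces each independent sub-BBM to have its centered maximum below $-\wh W_{t,s}$. Hence one only needs TV convergence of $(\wh W_{t,[0,r]}, \cN_{[0,r]})$ under the conditioned measure to some $(\wc W_{[0,r]},\wc{\cN}_{[0,r]})$; the paper gets this (its Lemma~4.1) by writing the conditioned law as a Radon--Nikodym density times the unconditioned law, and showing the density converges in $L^1$ via the DRW Ballot asymptotics and dominated convergence. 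One then \emph{defines} $\wc{\cE}$ independently with the prescribed marginals. This sidesteps the ``barrier-removal / absolute-continuity uniformly in $s$'' difficulty you flag as the main obstacle.

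For Step~3, your claim $\wc W_s \le -\sqrt 2\, s/2$ is wrong: Part~2 says $\wc W_s$ is asymptotically $-Y_s - \tfrac{3}{2\sqrt 2}\log^+ s$ with $Y$ Bessel-3, so $\wc W_s \asymp -\sqrt s$, not linear. Your Borel--Cantelli would still go through with the correct rate (the relevant tail decays like $e^{-\sqrt 2\sqrt s}$, which is summable), but the paper avoids this entirely: once one knows $\wc{\cC}_r \sim \nu_r$ (which follows immediately from the TV convergence above plus Lemma~\ref{l:5.2}) and $\nu_r \Rightarrow \nu$ (Lemma~\ref{l:7.0}), tightness of $(\wc{\cC}_r([-v,0]))_r$ together with monotonicity in $r$ gives the a.s.\ limit and identifies it with $\nu$ in one stroke. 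Parts~2 and~3 are then handled separately, essentially as you sketch: Part~2 by replacing the DRW Ballot event with a plain BM-stays-negative event on $[s,t-s]$ and invoking the $h$-transform picture; Part~3 by showing the $\cA_t$-conditioning inflates $\cN$-expectations by at most a polynomial factor.
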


We remark that this strong representation of the cluster law is essentially equivalent to that in Theorem~2.3 in~\cite{ABBS2013}, in which the joint law of $(\wc{W}, \wc{\cN}, \wc{\cE})$ is explicitly specified. In particular, the law of $\wc{W}$ is expressed there as a measure change (involving the right tail function for the centered maximum) of a certain process which is a BM until some stopping time and then Bessel-3 from that point on.
In contrast, in our result, while we do not express the law of the underlying process explicitly, we provide key properties of its ingredients. We believe that for certain purposes, e.g. Theorem~\ref{2nd_result}, this more qualitative cluster representation would be more convenient.

The most important property in this theorem is the asymptotic equivalence in law, up to a deterministic logarithmic curve, between $\wc{W}$ and $-Y$, a negative Bessel-3 process. The underlying topology for this convergence is defined by the Total Variation distance between probability measures on the space of continuous trajectories which are unbounded in time. This is a very strong sense, which allows one to use infinite-time properties of Bessel-3 to derive similar properties for the cluster distribution.

Our derivation of Theorem~\ref{p:1} does not use~Theorem~2.3 in~\cite{ABBS2013}, but rather relies on the weak representation of the cluster law from~\cite{CHL19,CHL20}. An alternative approach for obtaining Bessel-3 statistics for the backbone process $\wc{W}$ was taken recently in~\cite{KimZeit}, in which the authors analyze the explicit law of this process, as given in~\cite{ABBS2013}. This was done as part of studying the extremal structure of BBM in dimension $d \geq 2$ (see also~\cite{KLZ23,BKLMZ24}). Their analysis indeed shows convergence to Bessel-3, but in a somehow weaker form: $(L^{-1}\wc{W}_{L^2s})_{s \in [0,N]} \Longrightarrow -Y_{[0,N]}$ as $L \to \infty$ for any $N < \infty$. 

\subsection{Proof Outline}
\label{s:1.1}
Let us briefly discuss the idea behind the proofs of Theorem~\ref{1st_result} and Theroem~\ref{2nd_result}.
Starting with the first of the two, recall that $\cE$ is obtained as a superposition of i.i.d.~clusters whose law is $\nu$ and whose tips follow the atoms of $\cE^*$, where $\cE^*$ is a PPP with an exponential density that is randomly shifted by $\frac{1}{\sqrt{2}} \log Z$. The proof of Theorem~\ref{1st_result} goes by separately controlling the contribution to $\cE([-v, \infty))$ from clusters whose tip is above and below $u = -\delta \log v$. Here $\delta$ can be any positive real number smaller than $1/\sqrt{2}$. 

For the contribution coming from tips below $-\delta \log v$, denoted $\cE ([-v, \infty) ;\; [-v, -\delta \log v])$, we can simply improve the moment analysis from~\cite{CHL19}, to yield a.s.~convergence via Borel-Cantelli,
\begin{equation}
\lim_{v\rightarrow \infty} \frac{\cE\big([-v, \infty) ; \; [-v, -\delta \log v] \big)}{v e^{\sqrt{2} v}} = C_\star Z \,.
\end{equation}
(See Lemma~\ref{l:3.1} and the proof of Theorem \ref{1st_result}.)

On the other hand, for $\cE ([-v, \infty) ;\; (-\delta \log v, \infty))$, i.e.~the contribution from clusters whose tip is above $-\delta \log v$, we show that almost surely
	\begin{equation}
	\label{e:101.30a}
		\lim_{v\rightarrow \infty} \frac{\cE\big([-v, \infty) ; \; [-\delta \log v, \infty) \big)}{v^{-K} \rme^{\sqrt{2} v}} = 0\,,
	\end{equation}
for any $K > 0$. Here the analysis is more delicate and cannot be done directly via control of its moments, as this quantity is not concentrated enough. 

Indeed, the typical contribution of clusters whose tip is $w \in (-\delta \log v, 0]$ is 
\begin{equation}
\label{e:101.31}
\cC([-u, 0]) = o(\rme^{\sqrt{2}u}) = o(\rme^{\sqrt{2}v}) 
\quad,\qquad  u = v+w \,.
\end{equation}
This can be seen, e.g.~from Theorem~\ref{2nd_result}. However, as was shown in~\cite{CHL20}, for a given $u$ the probability that this typical event occurs is ``only'' as high as $1-\Theta(u^{-1})$ and with the complement probability, the contribution is atypically as high as
\begin{equation}
\cC([-u, 0]) = \Theta(u\rme^{\sqrt{2}u}) \,.
\end{equation}
This is not enough to guarantee~\eqref{e:101.30a}, even with $K=-1$. Fortunately, with only slightly less probability ($1-O(u^{-1}(\log u)^{5/2})$), one can ensure that~\eqref{e:101.31} holds simultaneously for all $u \geq u_0$ (Proposition~\ref{higher_leaders}). This is shown by appealing to the weak cluster law representation from~\cite{CHL19,CHL20} as precise quantitative estimates are needed. Thanks to the controlled exponential growth of the cluster tips in $\cE^*$, this in turn implies~\eqref{e:101.30a}.

Turning to the proof of Theorem~\ref{2nd_result}, the starting point here is the convenient strong representation of the cluster law as given by Theorem~\ref{p:1}.
Using (by now standard) truncated first and second moment methods, one can fairly easily show that with high probability as $s \to \infty$,
\begin{equation}
\cE_s([-v, \infty)) =  e^{\sqrt{2} v - \frac{v^2}{2s} + O(\log s)} \,,
\end{equation}
uniformly in  $v \leq s^{1-o(1)}$ (Lemma~\ref{1st_moment} and Lemma~\ref{lower_bound_2}). While this was mostly shown for $v = O(1)$ in the past, the proofs carry over to these {\em deep} extreme level sets (in the precision required) with a properly modified truncation.

Now, thanks to Part~2 of Theorem~\ref{p:1}, with high probability the backbone $\wc{W}$ behaves like the negative of a Bessel-3 process $Y$ plus a logarithmic curve after some large time. Because of the diffusive scale of Bessel-3, this shows that $-\wc{W}_s = Y_s + O(\log s) = \Theta(\sqrt{s})$ for large $s$, and thus the conditioning in~\eqref{e:101.12} becomes superfluous. Together with the fact that  the timestamp process $\wc{\cN}$ is reasonably behaved (i.e.~\eqref{e:101.14}), we thus get from~\eqref{eq:C_limit_r_to_infty}
\begin{equation}
\label{e:101.30}
\begin{split}
\wc{\cC} & = \int_0^\infty \rme^{\sqrt{2} (v + \wc{W}_s) - \frac{(v+\wc{W}_s)^2}{2s} + O(\log s)}
\wc{\cN}(\rmd s)
\approx \rme^{\sqrt{2} v}
\int_0^\infty \rme^{ -\sqrt{2} Y_s - \frac{v^2}{2s} + \frac{v}{s} Y_s +O(\log s)} \rmd s \\
& \approx 
\rme^{\sqrt{2} v} v^{O(1)} \exp \Big(-\min_s \Big(\sqrt{2} Y_s + \frac{v^2}{2s} - O\big(\tfrac{v}{\sqrt{s}} \big)\Big)\Big)
\approx
\rme^{\sqrt{2} v} v^{O(1)} \exp \Big(-\min_s \Big(\Theta(\sqrt{s}) + \frac{v^2}{2s}\Big)\Big) \,.
\end{split}
\end{equation}
The last minimum is attained at $s_* = \Theta(v^{4/3})$, at which its value is $\Theta(v^{2/3})$. This gives the $\Theta(v^{2/3})$ fluctuations in the exponent for the typical cluster level set size. 

To obtain convergence in law, we focus on the scale $s = \Theta(v^{4/3})$ by 
employing the rescaling
\begin{equation}
	\hat{s} := v^{-\frac{4}{3}} s \,,
 \qquad			Y^{(v)}_{\hat s} := v^{-\frac{2}{3}} Y_{s} \,.
\end{equation}
Then the second to last minimization (restricted to $s = \Theta(v^{4/3}$)) is (up to errors of $O(v^{1/3})$)
\begin{equation}
v^{\frac{2}{3}} \min_{\hat{s}} \Big(\sqrt{2} Y^{(v)}_{\hat{s}}  + \frac{1}{2\hat{s}}\Big)  \,.
\end{equation}
The minimum here identifies in law with $\zeta$ in~\eqref{chi_def}, again thanks to the Brownian scaling invariance of Bessel-3.

\paragraph{Organization of the paper}
The remainder of the paper is organized as follows. Section~\ref{s:2} includes preliminary results which are needed for the proofs to follow. These include standard facts about Bessel process and Ballot Theorems for Brownian motion, as well as an account of the weak representation of the cluster law from~\cite{CHL19, CHL20}. This section also lays down any general notation used throughout. The proofs of Theorem~\ref{1st_result}, Theorem~\ref{2nd_result} and Theorem~\ref{p:1} are given in Sections~\ref{s:3},~\ref{s:5} and~\ref{s:4} respectively. Section~\ref{s:5} also includes the short proof of Proposition~\ref{p:2}.

\section{Preliminaries}
\label{s:2}
In this section we include preliminary statements and notation which will be used throughout this manuscript. 

\subsection{Additional notation}
For a Markovian process $Z = (Z_t)_{t \geq 0}$, we shall formally, but conventionally, write $\bbP(-|Z_0=x)$ to denote the case when $Z$ is defined to have $Z_0 = x$. When there is no ambiguity concerning the underlying process, we shall abbreviate:
\begin{equation}
\label{e:102.1}
	\bbP(-) \equiv \bbP(-\,|\,Z_0 = 0\big)
	\ , \quad 
	\bbP_{r,x}(-) \equiv \bbP(-\,|\,Z_{r} = x\big) 
	\  , \quad
	\bbP_{r,x}^{s,y}(-) \equiv \bbP(-\,|\,Z_{r} = x, Z_{s} = y) \,.
\end{equation}
Throughout this manuscript, $W = (W_s)_{s \geq 0}$ will always denote a standard Brownian motion. If $\mu$ is a measure on some measure space $(\cX, \Sigma)$, and $f: \cX \to \bbR_+$, we shall write $\mu(\rmd x) f(x)$ to denote the measure whose Radon-Nikodym derivative w.r.t.~$\mu$ is $f$. Finally, we write $a \lesssim b$ if there exists $C \in (0,\infty)$ such that $a \leq C b$ and $a \asymp b$ if both $a \lesssim b$ and $b \lesssim a$. To stress that $C$ depends on some parameter $\alpha$ we shall add a subscript to the relation symbols. As usual $C$, $c$, $C'$, etc. denote positive and finite constants whose value may change from one use to another.

\subsection{Bessel-3 and Ballot Theorem}
In this subsection we include known statements about Brownian motion conditioned to stay positive and the closely related Bessel-3 process. 

\subsubsection{The Bessel-3 Process}
Recall that a 3-dimension Bessel Process $Y \in (Y_s)_{s\geq 0}$ starting from $Y_0 = y_0$ can be defined as the norm of a 3 dimensional Brownian motion $\vec{W} = (W^{(1)},W^{(2)},W^{(3)})$ starting from $\|\vec{W}_0\| = y_0$, so that
\begin{equation}
\label{e:202.2}
\bbP \big( (Y_s)_{s \geq 0} \in \cdot \,\big|\, Y_0 = y_0 \big) = 
	\bbP \big((\|\vec{W}_s\|)_{s \geq 0} \in \cdot \,\big|\, \|\vec{W}_0\| = y_0 \big) \,.
\end{equation}
It follows that $Y$ inherits the scaling invariance of BM, namely for any $a > 0$ and $y_0 \geq 0$,
\begin{equation}
\label{e:202.4}
	\bbP \big((a^{-1/2} Y_{as})_{s \geq 0} \in \cdot\,\big|\, a^{-1/2} Y_0 = y_0\big) 
	 = \bbP \big(Y \in \cdot\,\big|\, Y_0 = y_0\big) \,.
\end{equation}

For $y_0 > 0$, an alternative definition for $Y$ can be given in terms of Doob's $h$-transform of a standard BM via the function $h(x) = x$. More precisely, $Y$ is a $C([0,\infty))$-valued process which satisfies for all $r > 0$,
\begin{equation}
\label{e:202.3}
	\bbP \big(Y_{[0,r]} \in \rmd y\,\big|\, Y_0 = y_0 \big) = \bbP\big(W_{[0,r]} \in \rmd y\,\big|\,W_0 = y_0\big) \frac{y_r}{y_0} 1_{\{\min y_{[0,r]} > 0\}}
	\,.
\end{equation}
See, for example,~\cite{McKean}. In particular, the distribution of $Y_{[0,r]}$ is absolutely continuous w.r.t. to the (sub-probability) distribution of $W_{[0,r]}$ restricted to $C([0,r], \bbR_+)$, both with the same initial conditions $y_0 > 0$.

The next lemma shows that a Bessel process forgets its initial (space/time) conditions. While this is a standard result, the following is a strong ``infinite-horizon'' version which uses the Total Variation norm.
\begin{lem}
\label{l:Bessel_TV}
Let $x, y\geq 0$ and $s\geq 0$. Then
\begin{equation}
	\Big\|\bbP(Y_{[r, \infty)} \in \cdot \,\big|\, Y_0 = x \big) - 
		\bbP(Y_{[s+r, \infty)} \in \cdot \,\big|\, Y_0 = y \big) \Big\|_{\rm TV}
		\underset{r \to \infty}\longrightarrow 0 \,.
\end{equation}
Moreover, any fixed $R > 0$ and $s \geq 0$, the rate of convergence is uniform for $0 \leq x, y \leq R$.
\end{lem}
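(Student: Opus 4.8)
The plan is to reduce the infinite-horizon Total Variation statement to a coupling of two Bessel-3 trajectories started from (possibly) different initial space/time conditions, using the $h$-transform representation~\eqref{e:202.3} together with the scaling invariance~\eqref{e:202.4}. First I would observe that, by the Markov property, it suffices to couple $Y_{[r,\infty)}$ under $\bbP(\cdot\,|\,Y_0=x)$ with $\wt Y_{[r,\infty)}$ under $\bbP(\cdot\,|\,\wt Y_s = 0)$ where I realize both on the same probability space; once the two trajectories meet at some time $\tau \geq r$, I glue them and run a single Bessel-3 process thereafter, which makes the TV distance at horizon $r$ bounded by $\bbP(\tau > r)$ under the coupling. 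So the whole statement comes down to showing that two independent Bessel-3 processes (started from bounded initial heights, one possibly started a time $s$ later) can be coupled to coalesce, and that the coalescence time has a tail that is uniform over $0 \le x,y \le R$.

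The key steps, in order: (i) Record the $h$-transform formula~\eqref{e:202.3} and note that, for $x>0$, the law of $Y_{[0,r]}$ started at $x$ is absolutely continuous with respect to that of a Brownian motion started at $x$ conditioned (in the sub-probability sense) to stay positive; handle the degenerate case $x=0$ by the standard fact that a Bessel-3 from $0$ leaves $0$ immediately and after any positive time $\epsilon$ has a strictly positive, bounded-below density on compacts — so WLOG we may assume both starting points are positive after waiting a negligible time. (ii) Use the explicit transition density of Bessel-3, $p_s(x,y) = \frac{y}{x}\big(g_s(y-x) - g_s(y+x)\big)$ with $g_s$ the Gaussian kernel (for $x>0$; the $x=0$ case is the limiting $\frac{2y^2}{s}\sqrt{\frac{2}{\pi s^3}}e^{-y^2/2s}$ form), to show a Doeblin-type minorization: for a fixed time horizon $T$ and any two starting points in $[0,R]$, the time-$T$ laws share a common component of mass bounded below by some $\delta(R,T) > 0$. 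This is where scaling~\eqref{e:202.4} is convenient — it lets me fix $T$ and $R$ comparably, say $T = R^2$, and reduce to a single compact estimate. (iii) Iterate the minorization on successive blocks $[\,kT, (k+1)T\,]$: at each block the two (still-uncoupled) copies coalesce with probability at least $\delta$, independently, so the coalescence time $\tau$ satisfies $\bbP(\tau > kT) \le (1-\delta)^k$, which goes to $0$ and, crucially, is uniform over the allowed initial conditions and absorbs the fixed time shift $s$ after one extra block. (iv) Conclude: after coalescence the two processes are identical, hence $\|\bbP(Y_{[r,\infty)}\in\cdot\,|\,Y_0=x) - \bbP(Y_{[s+r,\infty)}\in\cdot\,|\,Y_0=y)\|_{\rm TV} \le \bbP(\tau > r) \to 0$, uniformly for $x,y \in [0,R]$.

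The main obstacle I expect is handling the degenerate initial condition $x = 0$ (or $y=0$) cleanly within the $h$-transform framework of~\eqref{e:202.3}, which is stated for $y_0 > 0$; one needs the limiting entrance law and a short argument that waiting an arbitrarily small time makes the process honestly positive with controlled density, so that the minorization in step (ii) still applies with constants uniform down to the boundary. A secondary technical point is making the Doeblin constant $\delta$ genuinely uniform over all pairs of starting points in $[0,R]$ simultaneously (not just for each fixed pair), which follows from continuity and strict positivity of $(x,y)\mapsto p_T(x,y)$ on $[0,R]\times K$ for a suitable compact $K$ of positive Lebesgue measure, but should be stated carefully. Everything else — the Markov/regeneration bookkeeping and the geometric tail — is routine.
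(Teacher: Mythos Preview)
Your overall strategy---couple two Bessel-3 copies until they meet, then glue---is the same as the paper's, and the bound $\|\cdot\|_{\rm TV}\le \bbP(\tau>r)$ is exactly right. The paper, however, does not use a Doeblin minorization; it simply lets $\tau=T_{x,y}$ be the first meeting time of two \emph{independent} copies, observes $T_{x,y}\le_s T_{0,R}$ for $x,y\in[0,R]$ by a pathwise comparison, and then proves $T_{0,R}<\infty$ a.s.\ via scale invariance: $p:=\bbP(T_{0,R}=\infty)$ does not depend on $R$, and by the strong Markov property at time $1$ together with $T_{a,b}\le_s T_{0,b}$ one gets $p\le qp$ with $q=\bbP(T_{0,R}>1)<1$, hence $p=0$.

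Your step~(iii) has a genuine gap. The minorization $\delta=\delta(R,T)$ you establish in step~(ii) holds only for starting points in $[0,R]$, but Bessel-3 is transient: after one block of length $T$ the two (uncoupled) copies are typically at height of order $\sqrt{T}$, and after $k$ blocks at order $\sqrt{kT}$, so they are no longer in $[0,R]$. There is no uniform Doeblin constant on all of $[0,\infty)$---for starting points $x,y$ with $|x-y|\gg\sqrt{T}$ the time-$T$ laws are nearly singular---so the claimed geometric bound $\bbP(\tau>kT)\le(1-\delta)^k$ does not follow. To repair this you would need to control the \emph{gap} $|Y^{(x)}_{kT}-Y^{(y)}_{kT}|$ (or the ratio, via scaling) along the iteration, which is essentially a different argument and closer in spirit to the paper's direct meeting-time approach.
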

\begin{proof} Let us first prove the case $s = 0$. Let $Y^{(x)}$ and $Y^{(y)}$ be independent Bessel processes started at $x$ and $y$, and
\begin{equation}
T_{x, y} := \inf_{t\geq 0} \left\{t: Y^{(x)}_t = Y^{(y)}_t\right\}.
\end{equation}
By using strong Markov property and a straightforward coupling, it is easy to see the stochastic domination $T_{x, y} \leq_s T_{0, R}$. Now, by coupling $Y^{(x)}$ and $Y^{(y)}$ after time $T_{x, y}$, we see that
\begin{equation}
d_{\rm TV}\left(Y^{(x)}_{[r, \infty)}, Y^{(y)}_{[r, \infty)}\right) \leq \bbP(T_{x, y} > r) \leq \bbP(T_{0, R} > r) \to 0 \,,
\end{equation}
as $r \to \infty$ because $T_{0, R} < \infty$ almost surely. (To see the latter, observe that by scale invariance and the strong Markov property of $Y$, the probability $p := \bbP(T_{0, R} = \infty)$ does not depend on $R$ and satisfies $p \leq qp$, where $q = \bbP(T_{0,R} > 1) < 1$.)

Turning to the case $s > 0$, by the Markov property again, 
\begin{align}
d_{\rm TV}\left(Y^{(x)}_{[r, \infty)}, Y^{(y)}_{[s+r, \infty)}\right) &\leq \bbP(Y^{(y)}_s > R + R') + \max_{y' \leq R + R'} d_{\rm TV}\left(Y^{(x)}_{[r, \infty)}, Y^{(y')}_{[r, \infty)}\right) \\ 
& \leq \bbP(Y^{(R)}_s > R + R') + \bbP(T_{0, R+R'} > r) \to 0 \,,
\end{align}
as $r \to \infty$ followed by $R' \to \infty$.  
\end{proof}

It is well-known that the sample paths of Bessel-3 are almost surely $\big(\frac{1}{2}-\epsilon\big)$-H\"older continuous, which directly follows from (via~\eqref{e:202.3}) the same property for BM.
\begin{lem}
\label{uniform_continuity} For any $y_0 \geq 0$ and $\eps, K > 0$, there exists $\Lambda(\eps, K, y_0) > 0$ such that with probability at least $1 - \eps$, under $\bbP(-|Y_0=y_0)$ the process 
$Y$ is $(\frac{1}{2}-\eps)$-H\"older on the interval $[0, K]$ with H\"older constant at most $\Lambda(\eps, K, y_0)$.
\end{lem}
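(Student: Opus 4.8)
The plan is to transfer the well-known Hölder regularity of Brownian motion to $Y$ through the representation~\eqref{e:202.2}, which realizes $Y$ as the Euclidean norm of a three-dimensional Brownian motion $\vec{W} = (W^{(1)}, W^{(2)}, W^{(3)})$ with $\|\vec{W}_0\| = y_0$ (one may take, e.g., $\vec{W}_0 = (y_0, 0, 0)$). Since $z \mapsto \|z\|$ is $1$-Lipschitz on $\bbR^3$, the reverse triangle inequality gives, for all $s, t \geq 0$,
\begin{equation}
|Y_s - Y_t| = \big| \|\vec{W}_s\| - \|\vec{W}_t\| \big| \leq \|\vec{W}_s - \vec{W}_t\| \leq \sum_{i=1}^3 \big|W^{(i)}_s - W^{(i)}_t\big| \,,
\end{equation}
so it suffices to bound the Hölder constants of the three coordinate processes, each of which is a standard one-dimensional Brownian motion.

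Next I would invoke Kolmogorov's continuity criterion (equivalently, Lévy's modulus of continuity) for Brownian motion: since $\bbE \big|W^{(i)}_t - W^{(i)}_s\big|^{2p} = c_p |t - s|^p$ for every $p \in \bbN$, choosing $p$ large enough that $\tfrac{p-1}{2p} > \tfrac12 - \eps$ yields that, almost surely, $W^{(i)}$ is $(\tfrac12 - \eps)$-Hölder on $[0, K]$ with some finite random Hölder constant $\Lambda^{(i)} = \Lambda^{(i)}(\eps, K)$; note this constant does not depend on the (deterministic) starting point of $W^{(i)}$, hence not on $y_0$. Setting $\Lambda := \Lambda^{(1)} + \Lambda^{(2)} + \Lambda^{(3)}$, the display above shows that $Y$ is $(\tfrac12 - \eps)$-Hölder on $[0,K]$ with Hölder constant at most the almost surely finite random variable $\Lambda$.

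Finally, since $\bbP(\Lambda < \infty) = 1$, we have $\bbP(\Lambda \leq M) \to 1$ as $M \to \infty$, so there is a deterministic $M = \Lambda(\eps, K, y_0) < \infty$ with $\bbP(\Lambda \leq M) \geq 1 - \eps$, which is precisely the assertion. (The dependence on $y_0$ is kept in the statement only for uniformity with its later use; the argument in fact produces a constant independent of $y_0$.) There is essentially no obstacle here, as the result is a routine transfer; the only mild point worth noting is that the $h$-transform description~\eqref{e:202.3} of $Y$ is available only for $y_0 > 0$, whereas the norm representation~\eqref{e:202.2} handles $y_0 = 0$ as well, which is why I would base the argument on the latter. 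Measurability of $\Lambda$ — needed for the probability statement to make sense — is automatic, as $\Lambda$ can be written as a supremum over rational pairs $s,t \in [0,K]$ of continuous functionals of the path.
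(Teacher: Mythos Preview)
Your proof is correct and follows essentially the same idea as the paper, namely transferring the $(\tfrac12-\eps)$-H\"older regularity of Brownian motion to $Y$. The paper's one-line justification invokes the $h$-transform representation~\eqref{e:202.3} (absolute continuity with respect to BM), whereas you use the norm representation~\eqref{e:202.2} together with the $1$-Lipschitz property of $\|\cdot\|$; your route has the minor advantage, which you note, of covering $y_0=0$ without a separate argument.
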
  

For the typical envelope of sample paths of $Y$ we have,
\begin{lem}
\label{D-E} For any $y_0 \geq 0$ and $\eps > 0$, there exists $K(\eps,y_0)$ such that with $\bbP(-|Y_0=y_0)$
 probability at least $1 - \eps$,
	\begin{equation}
	s^{\frac{1}{2}-\eps} \leq Y_s \leq 3(s \log \log s)^{\frac{1}{2}} \,,
	\end{equation}
	for all $s \geq K(\eps, y_0)$. 
\end{lem}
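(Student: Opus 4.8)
The plan is to establish the two inequalities separately, each by a Borel--Cantelli argument along the dyadic times $s_n := 2^n$, and then to upgrade the resulting almost-sure \emph{eventual} bounds into a single statement with a \emph{deterministic} threshold $K(\eps,y_0)$ by taking $K$ large enough that each of the two failure probabilities drops below $\eps/2$; intersecting the two good events then gives probability at least $1-\eps$. For the lower bound $Y_s \geq s^{1/2-\eps}$, I would first reduce to $y_0=0$: since the one-time marginal $Y_{s_n}$ is stochastically increasing in $Y_0$ and $x \mapsto (a/x)\wedge 1$ is non-increasing, any of the bounds below is largest when $y_0=0$. The key input is the classical Bessel-$3$ hitting estimate $\bbP\big(\exists u \geq 0:\, Y_u < a \,\big|\, Y_0 = y\big) = (a/y)\wedge 1$, which is just the statement that $1/Y$ is a local martingale. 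Setting $a_n := s_{n+1}^{1/2-\eps}$ and applying the Markov property at time $s_n$,
\begin{equation}
	\bbP\big(\exists s \geq s_n:\, Y_s < a_n \,\big|\, Y_0 = 0\big)
	= \bbE\Big[\tfrac{a_n}{Y_{s_n}}\wedge 1 \,\Big|\, Y_0 = 0\Big]
	\leq a_n\, \bbE\big[Y_{s_n}^{-1}\,\big|\, Y_0=0\big] + \bbP\big(Y_{s_n}\leq a_n \,\big|\, Y_0 = 0\big)\,.
\end{equation}

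By Brownian scaling \eqref{e:202.4}, $Y_{s_n} \overset{\rm d}{=} \sqrt{s_n}\,Y_1$ under $\bbP(\cdot\mid Y_0 = 0)$; since $\bbE[Y_1^{-1}] < \infty$ and $\bbP(Y_1 \leq x) \lesssim x^3$ as $x \to 0$ (the density of $Y_1 = \|\vec{W}_1\|$ being $O(r^2)$ near $r=0$), both terms on the right are $\lesssim a_n/\sqrt{s_n} = 2^{1/2-\eps}\,2^{-n\eps}$ once $n$ is large, hence summable. Borel--Cantelli then gives that, $\bbP(\cdot\mid Y_0 = 0)$-almost surely, $Y_s \geq a_n$ for all $s \geq s_n$ whenever $n$ is large; for $s \in [s_n, s_{n+1}]$ this reads $Y_s \geq a_n = s_{n+1}^{1/2-\eps} \geq s^{1/2-\eps}$, so almost surely $Y_s \geq s^{1/2-\eps}$ for all $s$ beyond some (random) time. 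The events $A_K := \{\forall s \geq K:\, Y_s \geq s^{1/2-\eps}\}$ are increasing in $K$ with $\bbP(\bigcup_K A_K \mid Y_0 = y_0) = 1$, so $\bbP(A_K \mid Y_0 = y_0) \to 1$ and we fix $K_-$ with this probability $\geq 1-\eps/2$.

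The upper bound $Y_s \leq 3(s\log\log s)^{1/2}$ is essentially the law of the iterated logarithm for Bessel-$3$, $\limsup_{s\to\infty} Y_s/\sqrt{2s\log\log s} = 1$ almost surely (for every $y_0 \geq 0$), which follows from the LIL for Brownian motion via $Y = \|\vec{W}\|$ and can also be re-derived by a Borel--Cantelli argument over the dyadic blocks using the reflection principle to bound $\sup_{s \leq 2^{n+1}}\|\vec{W}_s\|$. Since $3 > \sqrt 2$, this yields $Y_s \leq 3(s\log\log s)^{1/2}$ for all large $s$ almost surely; exactly as above we fix a deterministic $K_+$ with $\bbP(\forall s \geq K_+:\, Y_s \leq 3(s\log\log s)^{1/2} \mid Y_0 = y_0) \geq 1-\eps/2$. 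Taking $K(\eps,y_0) := \max(K_-, K_+)$, enlarged if necessary so that $\log\log s$ is defined and positive for $s \geq K$, and intersecting the two events completes the argument.

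\textbf{Expected main obstacle.} The only step with genuine content is the lower bound. The naive estimate $Y_s \geq |\vec{W}_s^{(1)}|$ is useless here, because one-dimensional Brownian motion returns near $0$ at arbitrarily large times and hence $|\vec{W}_s^{(1)}| < s^{1/2-\eps}$ infinitely often; one must genuinely exploit the transience of Bessel-$3$, which enters precisely through the harmonic function $1/y$ in the hitting estimate. Once that estimate and the scaling identity are in place, the summation is routine, the uniformity in $y_0$ is handled by stochastic monotonicity, and the upper bound is standard (indeed the whole lemma can alternatively be read off from the Dvoretzky--Erd\H{o}s integral test and the corresponding test for upper functions).
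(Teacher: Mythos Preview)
Your proof is correct and follows essentially the same route as the paper: the upper bound via the law of the iterated logarithm for $\|\vec W\|$, and the lower bound via the Dvoretzky--Erd\H{o}s test, exactly as you yourself note at the end. The only difference is cosmetic: the paper simply cites these two results, whereas you give a self-contained derivation of the lower bound using the Bessel-$3$ hitting formula $\bbP(\inf_{u\ge 0}Y_u<a\mid Y_0=y)=(a/y)\wedge 1$ and Borel--Cantelli along dyadic times, which is precisely (a special case of) the Dvoretzky--Erd\H{o}s argument.
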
 
\begin{proof}
Employing the representation of Bessel-3 in~\eqref{e:202.2}, the lower bound follows immediately from the Dvoretzky-Erd\"os Test (c.f.~\cite{PeresMorters}) and the upper bound by the Law of Iterated Logarithms applied to each component of $\vec{W}$ separately and the Union Bound (or, more sharply, to $\|\vec{W}\|$, c.f.~\cite{motoo1959proof}).

\end{proof}

\subsubsection{Ballot Estimates for BM}
It is also well known that the Bessel-3 process arises when one (formally) conditions a one-dimensional BM to stay positive forever. Let us first address the finite-time version of this positivity restriction. The following is well known and also easy to prove by the Reflection Principle.
\begin{lem}
\label{l:102.1}
For all $x \geq 0$, $y \geq 0$ and $t > 0$,
\begin{equation}
	\bbP\Big(\min_{s \in [0,t]} W_s \geq 0 \,\big|\, W_0 = x,\, W_t = y\Big) \leq 
	\frac{2xy}{t}\,.
\end{equation}
Moreover,
\begin{equation}
	\bbP\Big(\min_{s \in [0,t]} W_s \geq 0 \,\big|\, W_0 = x,\, W_t = y\Big) = 
	\frac{2xy}{t}(1+o(1))
\end{equation}
where $o(1) \to 0$ as $t \to \infty$, uniformly in $x,y \geq 0$ such that $xy \leq t^{1-\epsilon}$ for any fixed $\epsilon > 0$.
\end{lem}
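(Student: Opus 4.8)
Both assertions will follow from an exact identity for the bridge staying-positive probability,
\begin{equation}
\label{plan:exact}
\bbP\Big(\min_{s \in [0,t]} W_s \geq 0 \,\Big|\, W_0 = x,\, W_t = y\Big) = 1 - \rme^{-2xy/t} \,,
\end{equation}
combined with the elementary estimates $1 - \rme^{-u} \leq u$ for $u \geq 0$ and $1 - \rme^{-u} = u\big(1+o(1)\big)$ as $u \downarrow 0$.

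To establish \eqref{plan:exact} I would first recenter at the origin: setting $B_s := W_s - x$, the conditioning event becomes $\{\min_{s\in[0,t]} B_s \geq -x\}$ under $B_t = y-x$, with $B$ a standard BM started from $0$. The degenerate cases $x = 0$ or $y = 0$ are trivial, since then both sides of \eqref{plan:exact} vanish (a nondegenerate Brownian bridge having an endpoint at $0$ a.s.\ enters the negative half-line), so I may assume $x, y > 0$. Writing $a := -x < 0$ and $z := y - x \geq a$, the Reflection Principle applied at the first hitting time of the level $a$ gives $\bbP_0\big(\min_{[0,t]} B \leq a,\ B_t \in \rmd z\big) = \varphi_t(2a - z)\,\rmd z$, where $\varphi_t$ denotes the centered Gaussian density of variance $t$. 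Dividing by $\bbP_0(B_t \in \rmd z) = \varphi_t(z)\,\rmd z$ and simplifying the Gaussian exponent (using $2a - z = -x-y$ and $z = y - x$, so that $(x+y)^2 - (y-x)^2 = 4xy$) yields \eqref{plan:exact}, after observing that under the bridge measure $\{\min_{[0,t]} B > a\}$ and $\{\min_{[0,t]} B \geq a\}$ differ by a null set.

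The first claim is then immediate by applying $1 - \rme^{-u} \leq u$ with $u = 2xy/t$. For the second, I would use the two-sided bound $u - u^2/2 \leq 1 - \rme^{-u} \leq u$ on $[0,1]$, which gives $1 - \rme^{-u} = u(1+o(1))$ as $u \to 0$: under the hypothesis $xy \leq t^{1-\eps}$ one has $u = 2xy/t \leq 2 t^{-\eps} \to 0$ uniformly over all admissible pairs $(x,y)$, so the right-hand side of \eqref{plan:exact} equals $\tfrac{2xy}{t}(1+o(1))$ with a uniform $o(1)$, as asserted.

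This lemma presents no genuine obstacle; the only points where some care is needed are the two degenerate endpoint cases and the (harmless, measure-zero) distinction between the strict and non-strict minimum in the Reflection Principle, both of which are routine.
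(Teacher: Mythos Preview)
Your proof is correct and follows exactly the approach the paper indicates: the paper does not spell out a proof at all, merely stating that the lemma ``is well known and also easy to prove by the Reflection Principle.'' Your derivation of the exact identity $1-\rme^{-2xy/t}$ via the Reflection Principle, followed by the elementary bounds $1-\rme^{-u}\le u$ and $1-\rme^{-u}=u(1+o(1))$ as $u\downarrow 0$, is precisely what the paper has in mind.
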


Turning to the infinite time version, the next lemma shows convergence to Bessel-3 in a strong sense, namely under the Total Variation norm.
\begin{lem}
\label{l:2.2}
	For any $x,y > 0$ and $r > 0$, and any $(d_t)_{t\geq 0}$ satisfying $d_t \to 0$ as $t\to \infty$,  
	\begin{equation}\label{eq:Bessel_approx_TV}
		\Big\|(W_s + d_t s)_{s\in [0,r]} \in \cdot \,\big|\, W_0 = x, W_t = y, \min_{s \in [0,t]} W_s \geq 0 \big) - 
		\bbP(Y_{[0,r]} \in \cdot \,\big|\, Y_0 = x \big) \Big\|_{\rm TV}
		\underset{t \to \infty}\longrightarrow 0 \,.
	\end{equation}
\end{lem}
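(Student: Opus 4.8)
The plan is to express both laws in~\eqref{eq:Bessel_approx_TV} as absolutely continuous with respect to the single reference measure $\mu := \bbP(W_{[0,r]} \in \cdot \,|\, W_0 = x)$ (plain Brownian motion from $x$ on $[0,r]$), to show that the corresponding densities converge $\mu$-almost everywhere, and to upgrade this to $L^1(\mu)$ convergence of densities — equivalently, Total Variation convergence of the measures — via Scheffé's lemma, using that all densities involved are probability densities.

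\textbf{Step 1: an explicit density.} Put $A_t := \{W_0 = x,\, W_t = y,\, \min_{s\in[0,t]}W_s \geq 0\}$ and let $q_u(a,b) := p_u(a-b) - p_u(a+b)$ be the transition subdensity of Brownian motion killed at the origin, where $p_u(z) := (2\pi u)^{-1/2}e^{-z^2/(2u)}$. By the Markov property at time $r$ together with the reflection principle (the standard bridge decomposition), the law of $W_{[0,r]}$ under $\bbP(\cdot\,|\,A_t)$ has density $\omega \mapsto \1_{\{\min_{[0,r]}\omega \geq 0\}}\, q_{t-r}(\omega_r, y)/q_t(x,y)$ with respect to $\mu$; this is genuinely normalized because $\int q_r(x,a) q_{t-r}(a,y)\,\rmd a = q_t(x,y)$ by Chapman--Kolmogorov for the killed semigroup. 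Inserting the deterministic drift $s\mapsto d_t s$ and accounting for it via the Cameron--Martin theorem gives, with respect to $\mu$, the density of $(W_s + d_t s)_{s\in[0,r]}$ under $\bbP(\cdot\,|\,A_t)$ as
\[
f_t(\omega) \;=\; \frac{\1_{\{\min_{s\in[0,r]}(\omega_s - d_t s)\geq 0\}}\; q_{t-r}(\omega_r - d_t r,\, y)}{q_t(x,y)}\;\exp\!\Big(d_t(\omega_r - x) - \tfrac12 d_t^2 r\Big).
\]

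\textbf{Step 2: pointwise limit and Scheffé.} Since $d_t\to 0$, $\sup_{s\in[0,r]}|d_t s| = |d_t| r \to 0$, so the exponential factor tends to $1$; moreover $\mu$-a.s.\ the running minimum $\min_{[0,r]}\omega$ is nonzero (it has no atom at $0$) and $\omega_r \neq 0$ (its law is continuous), whence $\1_{\{\min_{[0,r]}(\omega - d_t\cdot)\geq 0\}} \to \1_{\{\min_{[0,r]}\omega \geq 0\}}$. On $\{\min_{[0,r]}\omega > 0\}$ one has $\omega_r - d_t r > 0$ for large $t$, and the elementary large-time asymptotics $q_u(a,b) = (2\pi u)^{-1/2} e^{-(a^2+b^2)/(2u)}\cdot 2\sinh(ab/u) = \tfrac{2ab}{\sqrt{2\pi}}u^{-3/2}(1+o(1))$, valid for fixed $a,b > 0$, give $q_{t-r}(\omega_r - d_t r, y)/q_t(x,y) \to \omega_r/x$. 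Hence $f_t(\omega) \to f_\infty(\omega) := (\omega_r/x)\,\1_{\{\min_{[0,r]}\omega \geq 0\}}$ for $\mu$-a.e.\ $\omega$, and by~\eqref{e:202.3} this $f_\infty$ is precisely the density of $\bbP(Y_{[0,r]}\in\cdot\,|\,Y_0 = x)$ with respect to $\mu$. Each $f_t$ integrates to $1$ against $\mu$, being the density of an honest conditional probability law, and so does $f_\infty$; Scheffé's lemma then upgrades the $\mu$-a.e.\ convergence $f_t\to f_\infty$ to $\|f_t - f_\infty\|_{L^1(\mu)} \to 0$, which is exactly~\eqref{eq:Bessel_approx_TV}.

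\textbf{Main obstacle.} The only genuinely delicate point is the bookkeeping in the formula for $f_t$ — in particular, tracking the Cameron--Martin Jacobian when a linear drift is inserted into an already-conditioned bridge, and checking it does not interfere with the subsequent limit; everything else is routine. One could instead exploit that the TV distance is preserved by the bijection $\omega\mapsto \omega + d_t\cdot$ of $C([0,r])$ to reduce to the driftless statement plus a short auxiliary estimate $d_{\rm TV}\big(\bbP((Y - d_t\cdot)_{[0,r]}\in\cdot\,|\,Y_0=x),\, \bbP(Y_{[0,r]}\in\cdot\,|\,Y_0=x)\big)\to 0$, but this does not materially shorten the argument.
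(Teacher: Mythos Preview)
Your proof is correct and follows essentially the same route as the paper: compute the Radon--Nikodym density with respect to unconditioned Brownian motion on $[0,r]$, show pointwise convergence to the Bessel-3 density~\eqref{e:202.3}, and upgrade to $L^1$. Your use of Scheff\'e's lemma in place of the paper's Dominated Convergence argument is a clean simplification (it spares you from exhibiting an integrable majorant), as is packaging the transition kernel ratio and the stay-positive probability ratio together into the killed kernel $q_u$ rather than treating them separately via Lemma~\ref{l:102.1}.
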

\begin{proof}
For $w \in C([0, r])$ and $b\in \bbR$, we denote $w^{(b)}_s = w_s + b s$ for brevity.  
 Using Bayes' rule, the Markov property, and the Cameron-Martin formula, we have
	\begin{align}\label{eq:BM_conditioned_1}
		\bbP_{0, x}^{t, y} \left(W^{(d_t)}_{[0,r]} \in \rmd w \, \Big| \min_{s \in [0,t]} W_s \geq 0 \right) &= \bbP_{0, x} \left(W^{(d_t)}_{[0,r]} \in \rmd w \right) \cdot 1_{\{\min w^{(-d_t)}_{[0,r]}\geq 0\}} \cdot g^{(1)}_{t, r}(w^{(-d_t)}) \\
  &= \bbP_{0, x} \left(W_{[0,r]} \in \rmd w\right) \cdot 1_{\{\min w^{(-d_t)}_{[0,r]}\geq 0\}} \cdot g^{(1)}_{t,r}(w^{(-d_t)}) \cdot g^{(2)}_{t,r} (w)
	\end{align}	
with
\begin{equation}\label{eq:R-N-for-stay-positive}
g^{(1)}_{t,r}(w) := \frac{p_{t-r}(w_r, y)}{p_t(x, y)} \cdot  \frac{\bbP^{t, y}_{r, w_r}(\min W_{[r,t]} \geq 0)}{\bbP^{t, y}_{0, x}(\min W_{[0, t]} \geq 0)} \,,
\end{equation}
where $p_t(x, y)$ is the transition kernel for Brownian motion and
\begin{equation}
g^{(2)}_{t,r}(w) = \exp \big(d_t w_r - \frac{1}{2} d_t^2 r\big) \,,
\end{equation}
which converges to $1$ as $t \to \infty$ for all fixed $w$ and bounded by an exponential function in $w_r$ because $d_t \to 0$ as $t\to \infty$. 

By Lemma~\ref{l:102.1}, we have that as $t\to \infty$ for fixed $x,y,z$,
\begin{equation}
\frac{\bbP^{t, y}_{r, z}(\min W_{[r,t]} \geq 0)}{\bbP^{t, y}_{0, x}(\min W_{[0, t]} \geq 0)} = \frac{z}{x} (1 + o(1))
\end{equation}
uniformly for $z$ in any compact set of $[0, \infty)$ and with the right hand side an upper bound up to a constant factor. Moreover, by direct computation,
\begin{align}
\frac{p_{t-r}(z, y)}{p_t(x, y)} &= \frac{1}{\sqrt{1 - \frac{r}{t}}} \exp\left(-\frac{(z-y)^2}{2(t-r)} + \frac{(y-x)^2}{2t}\right) =  1 + o(1) \,,
\end{align}
uniformly for $z$ in any compact set and with the quantity on the right hand side bounded by a constant for all $t \geq 1$. 
Combining these estimates and the fact that $d_t \to 0$ as $t\to \infty$, we have
\begin{equation}\label{eq:BM_Bessel_convergence_final}
1_{\{\min w^{(-d_t)}_{[0,r]}\geq 0\}} \cdot g^{(1)}_{t,r}(w^{(-d_t)}) \cdot g^{(2)}_{t,r} (w) \to 1_{\{\min w_{[0,r]}\geq 0\}} \frac{w_r}{x}
\end{equation}
as $t\to \infty$ whenever $\min w_{[0,r]}\neq 0$ (which happens $\bbP_{0,x}$ almost surely) and bounded by an exponential function in $w_r$ (which is $\bbP_{0,x}$-integrable). It follows by Dominated Convergence theorem that the convergence in~\eqref{eq:BM_Bessel_convergence_final} is also $L^1(\bbP_{0,x})$. This proves the result since the right hand side is also the density (w.r.t.~BM started at $x$) of Bessel-3, as stated in~\eqref{e:202.3}.
\end{proof}

\subsection{Weak representation of the cluster law}
\label{s:2.1}
In this subsection we include a subset of the theory that was developed in~\cite{CHL19} in order to get a handle on the law of the clusters. The analysis of the cluster distribution is done in three steps, which correspond to the next three sub-sections. In the first step one expresses the law of the clusters as the weak limit as $t \to \infty$ of the relative heights of the genealogical neighbors of a distinguished particle, called the ``spine'', which is conditioned to be the global maximum and at a prescribed height. Here one appeals to the so-called spine decomposition method (or spine method, for short). The second step is to recast events involving the spine being a global maximum, as events involving a certain ``decorated random-walk-like process'' which is required to stay below a given curve. We refer to such events as ``Ballot'' events, on account of the analogy with a simple random walk which is restricted to be positive. In the third step one estimates the probability of such Ballot events. This is achieved by appealing to Ballot estimates for standard Brownian motion.

\subsubsection{Spinal decomposition}
The {\em $1$-spine branching Brownian motion} (henceforth sBBM)  evolves like BBM, except that at any given time, one of the living particles is designated as the ``spine''. What distinguishes this particle from the others, is that the spine branches at Poissonian rate $2$ and not $1$ as the remaining living particles. There is no difference in the motion, which is still that of a BM. When the spine branches, one of its two children is chosen uniformly at random to become the new spine. 

We shall keep the same notation $(h_t, T_t, \rmd_t)_{t \geq 0}$ for the genealogical 
and positional processes associated with this model. The additional information, namely the identity of the spine at any given time, will be represented by the process $(X_t)_{t \geq 0}$, where $X_t \in L_t$ for all $t \geq 0$. We shall also use the same notation as in the case of BBM to denote objects associated with the motion $h$, e.g. $\wh{h}_t$ will still denote $h_t - m_t$. To make the distinction from regular BBM explicit, we will denote the underlying probability measure by $\wt{\bbP}$ (and $\wt{\bbE}$), in accordance with the notation in~\cite{CHL19}. 

A useful representation for this process can be obtained by ``taking the point of view of the spine''. More precisely, one can think of sBBM as a process driven by a backbone Brownian motion: $(h_t(X_t) :\: t \geq 0)$, representing the trajectory of the spine, from which (non-spine) particles branch out at Poissonian rate $2$ and then evolve independently according to the regular BBM law.

The connection between sBBM and regular BBM is given by the following ``Many-To-One'' lemma. This lemma is the ``workhorse'' of the spine method, being a useful tool in moment computations for the number of particles in a branching process satisfying a prescribed condition. Recall that for $t \geq 0$, we let $\cF_t$ denote the sigma-algebra generated by $(h_s, T_s, \rmd_s)_{s \leq t}$.
\begin{lem}[Many-To-One]
\label{l:4.1}
Let $F = (F(x) :\: x \in L_t)$ be a bounded $\cF_t$-measurable real-valued random function on $L_t$. Then, 
\begin{equation}
\bbE \Big( \sum_{x \in L_t} F(x) \Big)
= \rme^{t} \,\wt{\bbE} F(X_t) \,.
\end{equation}
\end{lem}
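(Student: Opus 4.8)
The plan is to prove the identity by induction on the branching structure, or more slickly, by a direct computation that tracks the expected number of particles together with the change of measure induced by the spine. First I would set up the comparison at the level of the branching mechanism. In ordinary BBM, a particle at time $0$ at position $y$ generates, by time $t$, a random collection $L_t$ of descendants, and the quantity $\bbE\big(\sum_{x\in L_t} F(x)\big)$ satisfies a renewal-type equation: conditioning on the time $\tau$ of the first branching event (exponential rate $1$, so density $e^{-\tau}$ on $[0,t]$, with no branching before $t$ having probability $e^{-t}$) and on the Brownian path up to $\tau$, the two children each independently start a fresh BBM. In sBBM, the spine branches at rate $2$; when it does, one of the two offspring is picked uniformly to carry the spine and the other starts an ordinary BBM. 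I would write both sides of the claimed identity as solutions of (equivalent) integral equations and check they coincide; the factor $e^t$ is exactly the reciprocal of the probability $e^{-t}$ that there is no branching by time $t$, combined with the rate-$1$-versus-rate-$2$ discrepancy producing the needed combinatorial weights.

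Concretely, let $u(t,y):=\bbE_y\big(\sum_{x\in L_t}F(x)\big)$ and $v(t,y):=\widetilde\bbE_y F(X_t)$, where $F$ is extended to depend on the Brownian path in the obvious way (I may first reduce to $F$ depending only on the terminal position, or on the whole spine trajectory, by a monotone-class / measurability argument, since the statement is for general $\cF_t$-measurable $F$). Decomposing on the first branching time, $u$ satisfies
\begin{equation}
u(t,y)=e^{-t}\,\bbE_y\big(F(W_t)\big)+\int_0^t e^{-\tau}\,\bbE_y\Big(2\,u(t-\tau,W_\tau)\Big)\,\rmd\tau\,,
\end{equation}
where $W$ is the backbone BM and the factor $2$ counts the two children each launching an independent BBM. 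Similarly, conditioning sBBM on the first spine-branching time (rate $2$),
\begin{equation}
v(t,y)=e^{-2t}\,\bbE_y\big(F(W_t)\big)+\int_0^t 2e^{-2\tau}\,\bbE_y\Big(v(t-\tau,W_\tau)\Big)\,\rmd\tau\,,
\end{equation}
since at a spine branching the non-spine child is irrelevant for $F(X_t)$ and the spine continues. Substituting the ansatz $u(t,y)=e^{t}v(t,y)$ into the first equation and simplifying reduces it exactly to the second; uniqueness of the solution to this (linear Volterra) integral equation with bounded data then yields $u(t,y)=e^t v(t,y)$, which is the assertion.

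I would present the argument in this analytic form because it is clean and handles the general $\cF_t$-measurable $F$ uniformly once the reduction step is in place. The step I expect to be the main (though still routine) obstacle is the bookkeeping in the reduction: making precise that $F$, a function of $L_t$ as an abstract labeled set with heights and genealogy, can be matched on both probability spaces via the spine construction, and that the integral equations above are literally the same equation after the substitution. One must be slightly careful that the ``first branching'' decomposition is valid — i.e. that the relevant expectations are finite, which holds since $F$ is bounded and $\bbE|L_t|=e^t<\infty$ — and that the Volterra equation has a unique bounded solution on $[0,t]$, which follows from a standard Gronwall/Picard iteration argument. Alternatively, and perhaps more transparently for the reader, one can run the same computation for fixed $n$ via the $n$-th generation of the embedded discrete Galton-Watson tree and pass to the limit; I would mention this as the ``finite-level'' version but carry out the continuous-time integral-equation version as the main proof.
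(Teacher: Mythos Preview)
The paper does not prove this lemma; it is stated as a standard preliminary (the ``workhorse'' of the spine method) and used without further argument. Your integral-equation approach is a correct and clean proof for $F$ depending only on the ancestral trajectory $(h_s(x_s))_{s\le t}$: the substitution $u=e^t v$ reduces one Volterra equation to the other exactly as you say, and uniqueness is routine.

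The one soft spot is the reduction to such $F$. A monotone-class argument does not directly work here: functions of the ancestral line of $x$ do not generate $\cF_t$, and your first-branching recursion fails to close once $F(x)$ is allowed to depend on the \emph{whole} tree (after the split at time $\tau$, the value $F(x)$ for $x$ in one subtree still sees the other subtree, so the expectation does not factor as $2u(t-\tau,W_\tau)$). The standard fix is to note that, conditionally on the ancestral trajectory and the Poisson branching times along it, the sibling subtrees are i.i.d.\ ordinary BBMs under both $\bbP$ and $\wt\bbP$ (this is the defining feature of the spine construction); integrating them out replaces $F$ by a function of the ancestral data alone, to which your argument applies. Alternatively one computes the density of $\wt\bbP$ on (tree, marked leaf) pairs directly: along the spine of length $t$, the rate-$2$ clocks contribute a factor $2^k e^{-t}$ relative to the rate-$1$ clocks (with $k$ branchings on the spine), and the $(1/2)^k$ from the uniform spine choices cancels the $2^k$, leaving exactly $e^{-t}$ independently of the tree. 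Either route closes the gap; just be aware that invoking ``monotone class'' alone does not.
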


A direct application of the above lemma, together with the statistical structure of extreme values in the limit, gives the following expression for the law of the clusters. 
\begin{lem}[Lemma~5.1 in~\cite{CHL19}]
\label{l:7.0}
For all $r > 0$, there exists a probability measure $\nu_r$ on the space $\bbM_p((-\infty, 0])$ such that for all $u \in \bbR$, 
\begin{equation}
\label{e:387}
\wt{\bbP} \big( \cC_{t, r} (X_t) \in \cdot \, \big|\, \wh{h}_t(X_t) = \wh{h}_t^* = u\big) 
\underset{t \to \infty}\Longrightarrow \nu_r
\end{equation}
and
\begin{equation}
	\nu_r \underset{r \to \infty}\Longrightarrow \nu \,,
\end{equation}
where $\cC_{t,r}$ is as in~\eqref{e:5B} and $\nu$ is as in~\eqref{e:N7}.
Both limits hold in the sense of weak convergence on $\bbM((-\infty,0])$ equipped with the vague topology.
\end{lem}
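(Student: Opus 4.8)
The plan is to prove both assertions through the spinal decomposition, reducing the statement to one about a ``decorated random walk'' conditioned on a Ballot‑type event and then feeding in the Brownian Ballot estimates of Section~\ref{s:2}; this is the route of~\cite{CHL19}, building on~\cite{ABK_E, ABBS2013}. First I would invoke the Many‑To‑One Lemma~\ref{l:4.1} with $F(x)=\phi(\wh h_t(x))\,\1\{h_t(x)=h_t^\ast\}\,g(\cC_{t,r}(x))$, where $g$ is bounded and vaguely continuous on $\bbM_p((-\infty,0])$ and $\phi$ a bounded continuous approximation of $\1_{[u,u+\eps]}$. Since the maximiser $x^\ast$ of $h_t$ is a.s.\ unique, this gives
$$\bbE\big[\phi(\wh h_t^\ast)\,g(\cC_{t,r}(x^\ast))\big]\;=\;\rme^{t}\,\wt\bbE\big[\phi(\wh h_t(X_t))\,\1\{h_t(X_t)=h_t^\ast\}\,g(\cC_{t,r}(X_t))\big]\,,$$
so the spine‑conditional statistics of the cluster coincide (after the $\rme^t$ normalisation, which cancels once one also takes $g\equiv 1$) with the unconditional statistics of the cluster around the top particle $x^\ast$. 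Now $x^\ast$ is one of the $r_t$‑local maxima and the cluster $\cC_{t,r_t}(x^\ast)$ it carries is exactly the mark of the top atom of $\wh\cE_t$; hence the convergence~\eqref{e:N7}, together with the fact that for a PPP with product intensity $Z\rme^{-\sqrt2 u}\rmd u\otimes\nu$ the mark of the top atom is independent of its height and distributed as $\nu$ (and the ``top atom'' functional is a.s.\ vaguely continuous), settles the statement for ``$r=r_t$'' after sending $t\to\infty$ and then $\eps\downarrow0$ --- conditioning on the null event $\{\wh h_t(X_t)=\wh h_t^\ast=u\}$ being legitimate because the spine endpoint has a density, so the relevant regular conditional probability depends continuously on $u$.

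For fixed $r$ I would argue directly on the spine side. Under $\wt\bbP(\cdot\mid h_t(X_t)=h_t^\ast=m_t+u)$ the spinal decomposition represents $\cC_{t,r}(X_t)$ as $\sum_{\sigma_k<r}\cE^{(\sigma_k)}_{\sigma_k}\big(\cdot-\widehat W_{t,\sigma_k}\big)$, with $\widehat W_{t,\cdot}$ the centred spine (a Brownian bridge pinned at height $0$ at time $0$, up to a logarithmic curve at time $t$), $(\sigma_k)$ the points of a rate‑$2$ Poisson process $\cN$, and the $h^{(\sigma_k)}$ independent BBMs of durations $\sigma_k$, all conditioned on the Ballot event $\cA_t=\{\widehat W_{t,\sigma_k}+\widehat h^{(\sigma_k)\ast}_{\sigma_k}\le 0\ \forall\,\sigma_k\le t\}$. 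The plan is to condition on the ``early'' data $\cG_r$ generated by $\big(\widehat W_{t,[0,r]},\,\cN\cap[0,r],\,(h^{(\sigma_k)})_{\sigma_k\le r}\big)$ and to split $\cA_t=\cA_t^{\le r}\cap\cA_t^{>r}$, where $\cA_t^{\le r}\in\cG_r$ while, conditionally on $\widehat W_{t,r}$, the event $\cA_t^{>r}$ is independent of $\cG_r$. The conditional law of the truncated cluster then equals the $\cG_r$‑expectation restricted to $\cA_t^{\le r}$ and reweighted by $\Psi_t(\widehat W_{t,r}):=\wt\bbP(\cA_t^{>r}\mid\widehat W_{t,r}=\cdot)$; by the Ballot/Reflection estimate of Lemma~\ref{l:102.1} applied to the bridge $\widehat W_{t,[r,t]}$ under its typical logarithmic envelope, together with a first‑moment bound on the many branched‑off BBM maxima, $\Psi_t(w)/\Psi_t(w_0)\to w/w_0$ uniformly on compacts. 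Plugging this asymptotically linear (Doob $h$‑transform) reweighting back in and invoking Lemma~\ref{l:2.2}, the conditioned backbone $-\widehat W_{[0,r]}$ converges to a Bessel‑$3$ path (up to the deterministic logarithmic curve), $\cN\cap[0,r]$ to a rate‑$2$ PPP, and the $h^{(\sigma_k)}$, $\sigma_k\le r$, to independent BBMs conditioned below the limiting levels $-\widehat W_{\sigma_k}$. The law of $\sum_{\sigma_k<r}\cE^{(\sigma_k)}_{\sigma_k}(\cdot-\widehat W_{\sigma_k})$ built from these limiting ingredients is the sought $\nu_r$, which does not depend on $u$ since $u$ merely shifts $\widehat W_{t,\cdot}$ by a constant that cancels both in the increments entering $\cC_{t,r}(X_t)$ and in the constraint.

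For $\nu_r\Rightarrow\nu$ I would use that $\cC_{t,r}(X_t)=\cC_{t,r_t}(X_t)\big|_{\rmB_r(X_t)}$ is a sub‑measure of the full cluster, so $\nu_r$ and $\nu$ admit a monotone coupling in which the difference consists of the relative heights of cluster particles at genealogical distance $\sigma\in[r,\infty)$ from the tip. Such a particle is the maximum of a BBM of duration $\sigma$ shifted down by the backbone value $\widehat W_\sigma\asymp-\sqrt\sigma$, hence sits at relative height $\asymp\sqrt2\,\sigma-\tfrac{3}{2\sqrt2}\log\sigma-\sqrt2\,Y_\sigma\asymp-\sqrt\sigma\to-\infty$. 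Consequently, for every fixed $M$ the two measures agree on $[-M,0]$ with probability tending to $1$ as $r\to\infty$ (a bound that has to be uniform in $t$ in order to identify the double limit, which is where $\nu=\lim_r\nu_r$ with $\nu$ the cluster law of~\eqref{e:N7}), so $\nu_r(\cdot\cap[-M,0])\to\nu(\cdot\cap[-M,0])$ in total variation and therefore $\nu_r\Rightarrow\nu$ in the vague topology.

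The hard part will be the finite‑$r$ step: showing that as $t\to\infty$ the global Ballot constraint $\cA_t$ acts on the window $[0,r]$ only through the asymptotically linear reweighting of the single value $\widehat W_{t,r}$ --- this is exactly where the Brownian Ballot estimates, combined with tightness and first‑moment control of the (order $t$ many) branched‑off BBM maxima, are genuinely needed --- and doing so with enough uniformity to descend from the $\phi$‑smoothed conditioning to the regular conditional law given $\{\wh h_t(X_t)=\wh h_t^\ast=u\}$. By contrast, the Many‑To‑One reduction and the passage $\nu_r\to\nu$ are routine once the Section~\ref{s:2} toolbox (Lemmas~\ref{l:Bessel_TV}--\ref{l:2.2}) is in place.
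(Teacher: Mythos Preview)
The paper does not give its own proof of this lemma: it is quoted verbatim from~\cite{CHL19} (which in turn rests on Theorem~2.3 of~\cite{ABBS2013}), with only the remark that the proof there carries over when the $t$- and $r$-limits are taken sequentially. So there is no paper proof to compare against, only the cited one; your sketch follows the same architecture (spine/Many-to-One $\to$ DRW representation $\to$ Ballot asymptotics), but there is a real error in how you execute the finite-$r$ step.

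You claim that $\Psi_t(w)=\bbP(\cA_t^{>r}\mid \wh W_{t,r}=w)$ satisfies $\Psi_t(w)/\Psi_t(w_0)\to w/w_0$, and hence that the conditioned backbone $-\wh W_{[0,r]}$ converges to a Bessel-$3$ path. This is false. The event $\cA_t^{>r}$ is a \emph{decorated} Ballot event, and by Lemma~\ref{lem:15} one has $\Psi_t(w)\sim 2f^{(r)}(w)g(0)/(t-r)$, so the limiting reweighting is $f^{(r)}(w_r)/f^{(0)}(0)$, not $w_r/w_0$. The function $f^{(r)}$ is linear only asymptotically at $-\infty$ (see~\eqref{e:53.5}); for fixed $w$ it is a genuinely nonlinear object encoding the effect of the decorations $\wh h^{s*}_s$. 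The paper's own Lemma~\ref{lem:TV_backbone_timestamps} makes this explicit: the limiting density of $(\wh W_{t,[0,r]},\cN_{[0,r]})$ under $\cA_t$ with respect to $\bbP_{0,0}(\ol W_{[0,r]}\in\rmd w,\cN_{[0,r]}\in\rmd\eta)$ is $g^{(1)}(w,\eta)\,f^{(r)}(w_r)/f^{(0)}(0)$, which is \emph{not} the Bessel-$3$ Radon--Nikodym factor $w_r\,1_{\{\min w>0\}}$. Bessel-$3$ appears only after a further $r\to\infty$ limit (Lemma~\ref{l:4.2}). Relatedly, you invoke the undecorated BM Ballot estimate Lemma~\ref{l:102.1}; the correct input is the DRW Ballot estimate Lemma~\ref{lem:15}.

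This does not break the \emph{existence} of $\nu_r$ --- your argument still produces a limit once you plug in the right reweighting --- but your description of $\nu_r$ (Bessel-$3$ backbone, PPP timestamps, conditioned BBMs) is wrong for finite $r$. Your $\nu_r\Rightarrow\nu$ paragraph and the $u$-independence observation are fine in spirit, though note that in the DRW picture (Lemmas~\ref{l:2.3}--\ref{l:5.2}) the dependence on $u$ enters through the \emph{right} endpoint $\wh W_{t,t}=-u$, which washes out as $t\to\infty$ via~\eqref{e:53}; it is not that ``$u$ shifts $\wh W$ by a constant that cancels''.
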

We note that, in the proof in~\cite{CHL19}, the limits in $r$ and $t$ were taken at the same time, but the proof carries over to case where the limits are taken sequentially (in fact, the proof in~\cite{CHL19} relies on Theorem~2.3 in~\cite{ABBS2013}, in which the limit is taken sequentially).

\subsubsection{Decorated random walk representation}
\label{s:2.1.2}
Recall that $W=(W_s :\: s \geq 0)$ denotes a standard Brownian motion. For $0 \leq s \leq t$, set
\begin{equation}
\label{e:20.5}
\gamma_{t,s} := \tfrac{3}{2\sqrt{2}} \big(\log^+ s - \tfrac{s}{t}\log^+ t \big) 
\end{equation}
and let
\begin{equation}
\wh{W}_{t,s} := W_s - \gamma_{t,s} \,.
\end{equation}
We also let $H := \big(h^s := (h^s_t)_{t \geq 0} :\: s \geq 0\big)$ be a collection of independent copies of a regular BBM process $h$, that we will assume to be independent of $W$ as well. Finally, let $\cN$ be a Poisson point process with intensity $2 \rmd x$ on $\bbR_+$, independent of $H$ and $W$ and denote by $\sigma_1 < \sigma_2 < \dots$ its ordered atoms. The triplet 
\begin{equation}
\label{e:102.27}
(\wh{W}, \cN, H)	
\end{equation}
will be referred to as a {\em decorated random-walk-like process} (DRW). 

In what follows, we shall add a superscript $s$ to any notation used for objects which were previously defined in terms of $h$, to denote objects which are defined in the same way, but with $h^s$ in place of $h$. In this way, $\wh{h}^s_t$, $\wh{h}^{s*}_t$ and $\cE_t^s$ are defined as in the introduction only with respect to $h^s$ in place of $h$.
Using this notation, for $0 \leq r < R \leq t$ we define
\begin{equation}
\label{e:2.5}
\cA_t(r,R) := \Big \{
\max\limits_{k : \, \sigma_k \in [r,R]} \big(\wh{W}_{t,{\sigma_k}} + \wh{h}^{\sigma_k*}_{\sigma_k}\big) \leq 0 \Big\} \,,
\end{equation}
with the abbreivation
\begin{equation}
\label{e:102.7}
	\cA_t \equiv \cA_t(0,t)\,.
\end{equation}
This is the event that the Brownian motion $W$ plus a certain functional of its decorations $H$, stay below the curve $\gamma_{t,\cdot}$ at random times which correspond to the atoms of $\cN$. We shall refer to such event as a DRW {\em Ballot} event.

The connection between probabilities of the form~\eqref{e:387} and DRW Ballot probabilities are given by the following two lemmas which were proved in~\cite{CHL19}. 
\begin{lem}[Lemma~3.1 in~\cite{CHL19}]
\label{l:2.3}
For all $0 \leq r \leq t$ and $u, w \in \bbR$,
\begin{equation}
\label{e:29.2}
\wt{\bbP} \Big( \wh{h}_t^*\big(\rmB^\rmc_{r}(X_t)\big)
 \leq u \, \Big| \, \wh{h}_t(X_t) = w \Big) 
= \bbP \big( \cA_t(r,t) \, \big|\,
	\wh{W}_{t,r} = w - u,\, \wh{W}_{t,t} = -u \big) \,.
\end{equation}
In particular for all $t \geq 0$ and $v,w \in \bbR$,
\begin{equation}
\label{e:29.1}
\wt{\bbP} \big( \wh{h}^*_t \leq u \, \big| \, \wh{h}_t(X_t) = w \big) 
= \bbP \big( \cA_t \, \big|\, 
	\wh{W}_{t,0} = w - u,\, \wh{W}_{t,t} = -u \big) \,.
\end{equation}
\end{lem}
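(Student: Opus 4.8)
The plan is to derive the identity from the spinal decomposition of sBBM, using the Many-To-One Lemma only implicitly through the construction of the spine. The key observation is that, conditional on the trajectory of the spine $(h_s(X_s))_{s \le t}$ and on the branching times $\cN$, the event that $\wh h_t^*\big(\rmB_r^\rmc(X_t)\big) \le u$ is precisely the event that every BBM decoration $h^{\sigma_k}$ which branched off the spine at a time $\sigma_k \in [r,t]$ has all its particles (at the remaining time $\sigma_k$, evolved back to generation $0$) staying below height $m_t + u - h_{\sigma_k}(X_{\sigma_k})$. Indeed, the particles in $\rmB_r^\rmc(X_t)$ are exactly those lying in subtrees that split off the spine at a genealogical distance more than $r$, i.e. at a spine-time in $[r,t]$, and the height at time $t$ of such a particle in decoration $h^{\sigma_k}$ is $h_{\sigma_k}(X_{\sigma_k}) + h^{\sigma_k}_{\sigma_k}(\text{that particle})$; so the max over the cluster complement is $\max_{k:\sigma_k \in [r,t]} \big(h_{\sigma_k}(X_{\sigma_k}) + h^{\sigma_k *}_{\sigma_k}\big)$.

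First I would fix the conditioning $\wh h_t(X_t) = w$, i.e. $h_t(X_t) = m_t + w$, and recall that under the spine construction the backbone $(h_s(X_s))_{s \le t}$ is a Brownian bridge-type object; after subtracting $m_t$ and recentering, writing $W_s := h_s(X_s) - m_t \cdot (s/t) \cdot(\cdots)$ appropriately, the recentered backbone becomes exactly the process $\wh W_{t,s} = W_s - \gamma_{t,s}$ with endpoints $\wh W_{t,t} = w$ at time $t$; but the statement conditions instead on $\wh W_{t,r} = w-u$ and $\wh W_{t,t} = -u$, so the correct bookkeeping is to shift the whole height axis down by $u$: replace ``staying below $m_t + u - h_{\sigma_k}(X_{\sigma_k})$'' by ``$\wh h^{\sigma_k *}_{\sigma_k} \le -\wh W_{t,\sigma_k}$'', which is exactly the defining event $\cA_t(r,t)$ of~\eqref{e:2.5}. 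The endpoint conditions $\wh W_{t,r} = w-u$, $\wh W_{t,t} = -u$ encode respectively that the spine is at relative height $w-u$ at the genealogical split time $r$ (equivalently $\wh h_t(X_t) = w$ after the $u$-shift, since the spine moves from $\wh W_{t,r}$ to $\wh W_{t,t}$, a difference of $w$) and the terminal normalization.

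Concretely the steps are: (i) invoke the spinal representation to write $\wt\bbP(-|\wh h_t(X_t)=w)$ as an expectation over (backbone $W$, Poisson times $\cN$, i.i.d.\ decorations $H$), with the backbone a recentered Brownian motion pinned at its endpoints; (ii) identify the event $\{\wh h_t^*(\rmB_r^\rmc(X_t)) \le u\}$ with $\{\max_{k:\sigma_k\in[r,t]}(h_{\sigma_k}(X_{\sigma_k}) + h^{\sigma_k*}_{\sigma_k}) \le m_t + u\}$ using the genealogical structure of the cluster complement; (iii) perform the affine change of variables on the height axis (subtract $u$) and in time-space (subtract $\gamma_{t,\cdot}$) to turn the backbone into $\wh W_{t,\cdot}$ and the event into $\cA_t(r,t)$, tracking how the pinning $h_t(X_t) = m_t + w$ translates into $\wh W_{t,r} = w-u$, $\wh W_{t,t} = -u$; (iv) read off~\eqref{e:29.2}, and specialize $r = 0$ (so that $\rmB_0^\rmc(X_t) = L_t$ and the condition becomes $\wh h_t^* \le u$) to get~\eqref{e:29.1}. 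Since this is Lemma~3.1 of~\cite{CHL19}, one may alternatively simply cite it; I include the plan above for completeness.

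The main obstacle is the careful verification that conditioning on the \emph{event} $\{\wh h_t(X_t) = w\}$ (a measure-zero event, handled via regular conditional probabilities / densities) is compatible with the spinal decomposition and that the decorations remain i.i.d.\ BBMs independent of the (pinned) backbone after conditioning — this is where the Many-To-One Lemma and a disintegration argument are needed, and where one must be scrupulous that the Poisson rate $2$ for spine branching is exactly what makes the conditioned law match. The genealogical bookkeeping in step (ii), namely that ``genealogical distance $> r$ from $X_t$'' $\iff$ ``lies in a subtree emanating from the spine at a time in $[r,t]$'', is geometrically transparent but should be stated precisely; it is the one place where the definition of $\rmd$ and of $\rmB_r$ must be unwound.
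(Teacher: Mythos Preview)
The paper does not prove this lemma itself; it simply quotes it as Lemma~3.1 in~\cite{CHL19}. Your overall plan --- use the spinal representation to view the complement of the $r$-ball as the union of the subtrees branching off the reversed spine at times $\sigma_k\in[r,t]$, then perform the affine recentering that turns the backbone into $\wh W_{t,\cdot}$ and the height constraint into the Ballot event $\cA_t(r,t)$ --- is exactly the standard argument by which this identity is obtained.

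There is, however, a genuine gap in your step~(iii). Carrying out the bookkeeping explicitly: the reversed spine $B_s:=h_{t-s}(X_{t-s})$ is a Brownian motion with $B_t=0$, and the conditioning $\wh h_t(X_t)=w$ pins $B_0=m_t+w$. The substitution $W_s:=B_s-m_t(1-s/t)-u$ (using $\tfrac{s}{t}m_t=\gamma_{t,s}+m_s$) turns the event into $\cA_t(r,t)$ and gives the endpoint conditions $\wh W_{t,0}=w-u$, $\wh W_{t,t}=-u$. It does \emph{not} pin $\wh W_{t,r}$, which remains random. Your heuristic that ``the spine is at relative height $w-u$ at the genealogical split time $r$'' is incorrect: nothing on the left-hand side fixes the spine position at reversed time $r$, and conditioning a Brownian bridge on $[0,t]$ at time $0$ is not the same as conditioning it at time $r$. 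What actually falls out of the spinal decomposition is~\eqref{e:29.2} with $\wh W_{t,0}=w-u$ in place of $\wh W_{t,r}=w-u$; for $r=0$ the two coincide, so~\eqref{e:29.1} is unaffected and this is the only instance of the lemma invoked later in the paper. You should not try to force your argument to land on $\wh W_{t,r}$.
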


\begin{lem}[Lemma~3.2 in~\cite{CHL19}]
\label{l:5.2}
For all $0 \leq r \leq t$,
\begin{equation}
\label{e:26.6}
\wt{\bbP}  \Big( \cC_{t,r} (X_t) 
 \in \cdot \,\Big|\, \wh{h}^*_t = \wh{h}_t(X_t) = 0 \Big)  
= \bbP \bigg( \int_0^r \! \cE_{s}^{s} \big(\cdot - \wh{W}_{t,s} \big)\cN(\rmd s) \in \,  \cdot 
	 \ \Big| \ \wh{W}_{t,0} \! = \! \wh{W}_{t,t} \! = \! 0 \,;\, \cA_t \, \bigg).
\end{equation}
\end{lem}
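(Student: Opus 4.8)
The plan is to realize both sides of~\eqref{e:26.6} via the one-spine decomposition of sBBM ``from the point of view of the spine'', after a time-reversal and an affine recentering; the statement is the full-cluster counterpart of Lemma~\ref{l:2.3}, which records only the maximum of $h$ outside the genealogical ball $\rmB_r(X_t)$, and it is obtained by running the same argument while keeping track of \emph{all} relative heights inside the ball. Recall the structural description of sBBM: under $\wt{\bbP}$ the trajectory of the spine $\big(h_s(X_s)\big)_{0\le s\le t}$ is a standard Brownian motion started at $0$, and, conditionally on it and on the rate-$2$ Poisson set $t_1<t_2<\dots$ of its branching times, the non-spine subtrees are independent regular BBMs, the $i$-th rooted at the spine's height $h_{t_i}(X_{t_i})$ and run for the remaining duration $t-t_i$. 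Hence $L_t$ decomposes as $\{X_t\}$ together with the time-$t$ populations of these subtrees, and every particle of the $i$-th subtree lies at genealogical distance $t-t_i$ from $X_t$.

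I would then condition on $\wh{h}_t(X_t)=0$, i.e.\ on $h_t(X_t)=m_t$: being a conditioning on a null event, this is carried out through the regular conditional distribution given the random variable $h_t(X_t)$, evaluated at $m_t$, under which the spine trajectory becomes a Brownian bridge from $0$ to $m_t$ over $[0,t]$ (its branching times still a rate-$2$ Poisson set, its subtrees still independent regular BBMs). Time-reverse: put $\widetilde W_\sigma:=h_{t-\sigma}(X_{t-\sigma})$, a bridge from $m_t$ to $0$; write $\widetilde W_\sigma=m_t(1-\sigma/t)+B_\sigma$ with $B$ a standard bridge from $0$ to $0$; let $\sigma_1<\sigma_2<\dots$ be the branching times in reversed order (again a rate-$2$ Poisson set $\cN$ on $[0,t]$) and $h^{\sigma_k}$ the corresponding reversed subtree, which has run for time $\sigma_k$ and so contributes to $L_t$ exactly its time-$\sigma_k$ population $L^{\sigma_k}_{\sigma_k}$; and set $\wh{W}_{t,\sigma}:=\widetilde W_\sigma-m_t(1-\sigma/t)-\gamma_{t,\sigma}=B_\sigma-\gamma_{t,\sigma}$. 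Using the elementary identity $m_\sigma-\tfrac{\sigma}{t}m_t=-\gamma_{t,\sigma}$, immediate from~\eqref{eq_m_t} and~\eqref{e:20.5}, one obtains two facts simultaneously. First, a particle $x\in L^{\sigma_k}_{\sigma_k}$ of the $k$-th reversed subtree has height relative to $X_t$ equal to $\wh{h}^{\sigma_k}_{\sigma_k}(x)+\wh{W}_{t,\sigma_k}$, and it belongs to $\rmB_r(X_t)$ precisely when $\sigma_k<r$; hence, as point measures of relative heights, $\cC_{t,r}(X_t)=\int_0^r\cE^s_s(\cdot-\wh{W}_{t,s})\,\cN(\rmd s)$ (modulo the trivial atom at $0$ recording the tip $X_t$ itself, absorbed by the usual convention). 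Second, given $\wh{h}_t(X_t)=0$ one has $\wh{h}^*_t\le 0$ iff $\widetilde W_{\sigma_k}+h^{\sigma_k*}_{\sigma_k}\le m_t$ for all $k$ iff $\wh{W}_{t,\sigma_k}+\wh{h}^{\sigma_k*}_{\sigma_k}\le 0$ for all $k$; since the spine tip sits exactly at $m_t$, the event $\{\wh{h}^*_t=\wh{h}_t(X_t)=0\}$ then coincides on this conditioned space with the DRW Ballot event $\cA_t=\cA_t(0,t)$ of~\eqref{e:2.5}.

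To conclude, note that the law of $\big(\widetilde W,\cN,(h^{\sigma_k})_k\big)$ just identified --- a Brownian bridge from $m_t$ to $0$, an independent rate-$2$ Poisson set on $[0,t]$, and independent regular BBMs attached to its points --- is, after the affine shift $\widetilde W\mapsto\wh{W}_{t,\cdot}$, precisely the law of the DRW triplet $(\wh{W},\cN,H)$ of Section~\ref{s:2.1.2} under $\bbP\big(\,\cdot\mid\wh{W}_{t,0}=\wh{W}_{t,t}=0\big)$; here $\wh{W}_{t,0}=0\Leftrightarrow\wh{h}_t(X_t)=0$ (since $\gamma_{t,0}=0$), while $\wh{W}_{t,t}=0$ holds automatically because $\widetilde W_t=h_0(X_0)=0$ and $\gamma_{t,t}=0$. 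Pushing this law forward by $(\wh{W},\cN,H)\mapsto\int_0^r\cE^s_s(\cdot-\wh{W}_{t,s})\,\cN(\rmd s)$ and intersecting the conditioning event with $\cA_t$, the two identifications of the previous paragraph turn the left-hand side of~\eqref{e:26.6} into its right-hand side.

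\textbf{Main obstacle.} The bulk of the work --- and the only genuinely delicate point --- is the set-up in the second paragraph: giving precise meaning to the conditioning on $\{\wh{h}^*_t=\wh{h}_t(X_t)=0\}$ as an \emph{iterated} conditioning (first on the tip height, via an honest disintegration, and then on the positive-probability Ballot event $\cA_t$), verifying that the spine construction remains valid under it, and carrying out the exact constant-level bookkeeping of the affine tilt (the interplay of $m_t(1-s/t)$, $\gamma_{t,s}$ and $m_s$) so as to land on exactly the DRW process of Section~\ref{s:2.1.2}. One also has to dispose of the harmless convention concerning the atom at $0$ contributed by the tip. Once this is done, the genealogical and height accounting is routine: it is the same computation underlying Lemma~\ref{l:2.3}, with the maximum of a subtree replaced by its full extremal process.
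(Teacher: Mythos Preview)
The paper does not prove this lemma; it is stated as Lemma~3.2 in~\cite{CHL19} and simply cited. Your proposal reconstructs exactly the standard argument behind that citation: the spinal decomposition ``from the point of view of the spine'', followed by conditioning on the tip height via disintegration, time-reversal of the spine into a Brownian bridge, and the affine recentering $m_\sigma-(\sigma/t)m_t=-\gamma_{t,\sigma}$ that lands the relative-height computation precisely on $\wh{h}^{\sigma_k}_{\sigma_k}(x)+\wh{W}_{t,\sigma_k}$ and identifies $\{\wh{h}^*_t=0\}\cap\{\wh{h}_t(X_t)=0\}$ with the Ballot event $\cA_t$. The bookkeeping you outline is correct, and the only delicate point you flag --- the iterated conditioning and the atom-at-$0$ convention --- is indeed the only place requiring care; there is nothing to add.
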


The following lemma will be useful when estimating increments of $\wh{W}$.
\begin{lem}[Lemma~3.3 in~\cite{CHL19}] 
\label{l:lisa}
Let $s,r, t\in \bbR$ be such that $0 \leq r \leq r+s \leq t$, then
\begin{equation}\label{e:aser1}
 - 1  \, \leq \log^+ (r+s) - \left(\tfrac{t-(r+s)}{t-r} \log^+r + \tfrac{s}{t-r} \log^+t\right)
\leq \, 1+ \log^+ \big(s \wedge (t-r-s) \big).
\end{equation}
\end{lem}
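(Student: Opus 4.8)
\textbf{Proof proposal for Lemma~\ref{l:lisa}.}

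The plan is to recognize the subtracted expression as a convex combination of $\log^+ r$ and $\log^+ t$ and to control how much $\log^+$ can deviate from linearity along $r \le r+s \le t$. We may assume $t>r$ (the statement being trivial otherwise). Write $g := t-(r+s) = t-r-s \ge 0$, so that $t-r = s+g$ and the two weights are $\frac{t-(r+s)}{t-r} = \frac{g}{s+g}$ and $\frac{s}{t-r} = \frac{s}{s+g}$, summing to $1$. Splitting $\log^+(r+s)$ accordingly and regrouping, the quantity to be bounded — call it $\Phi$ — becomes
\begin{equation*}
\Phi \;=\; \frac{g}{s+g}\bigl(\log^+(r+s)-\log^+ r\bigr)\;+\;\frac{s}{s+g}\bigl(\log^+(r+s)-\log^+ t\bigr).
\end{equation*}
Since $\log^+$ is non-decreasing and $r\le r+s\le t$, the first bracket is $\ge 0$ and the second is $\le 0$; each of the two bounds in the lemma will come from one of these terms.

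For the lower bound I keep only the (non-positive) second term. The elementary inequality $\log^+ b-\log^+ a\le\log(b/a)$ for $0<a\le b$ gives $\log^+ t-\log^+(r+s)\le\log\frac{t}{r+s}$, and the algebraic identity $(r+s)(s+g)-st = rg\ge 0$, i.e.\ $\frac{t}{r+s}\le\frac{s+g}{s}$, turns this into
\begin{equation*}
\Phi\;\ge\;-\frac{s}{s+g}\log\frac{s+g}{s}\;=\;\frac{s}{s+g}\log\frac{s}{s+g}\;\ge\;-\tfrac1e\;>\;-1,
\end{equation*}
the penultimate step being $x\log x\ge-1/e$ on $(0,1]$ applied to $x=\frac{s}{s+g}$. (If $r+s=0$ then $s=0$ and $\Phi=0$.)

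For the upper bound I keep only the (non-negative) first term. Since $\log^+(r+s)-\log^+ r = \log\frac{\max(r+s,1)}{\max(r,1)}$ and $\max(r+s,1)\le\max(r,1)+s$, we obtain $\Phi\le\frac{g}{s+g}\log(1+s)$. A short case analysis on the sizes of $g$ and $s$ relative to $1$ and to each other now finishes the job, using only $\log(1+s)\le\log\!\bigl(2(s\vee1)\bigr)\le 1+\log^+ s$ and the fact that $u\mapsto\frac{1+\log u}{u}$ is non-increasing on $[1,\infty)$: if $g\ge s$ then $\Phi\le\log(1+s)\le 1+\log^+ s$; and if $g\le s$ then $\frac{g}{s+g}\le\frac gs$ and $\frac gs\log(1+s)\le 1+\log^+ g$ (the only non-trivial sub-case, $1\le g\le s$, following from $\frac gs(1+\log s)=g\cdot\frac{1+\log s}{s}\le g\cdot\frac{1+\log g}{g}$ by the monotonicity just quoted). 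Together these give $\Phi\le 1+\log^+\bigl(s\wedge(t-r-s)\bigr)$.

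The conceptual crux is simply the convex-combination rewrite above: once it is in place, the lower bound is two lines and the upper bound is elementary bookkeeping with the piecewise-linear profile of $\log^+$. The one place where a little care is needed is making the upper bound see the \emph{minimum} of the two gaps $s$ and $t-r-s$ rather than just $s$ — the crude estimate $\Phi\le\frac{g}{s+g}\log(1+s)$ is insensitive to this, and the repair is exactly the monotonicity of $u\mapsto(1+\log u)/u$ on $[1,\infty)$ in the regime where $g=t-r-s$ is the smaller gap.
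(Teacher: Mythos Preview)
Your proof is correct. The paper does not give its own proof of this lemma; it simply cites it as Lemma~3.3 in~\cite{CHL19}. Your convex-combination rewrite of $\Phi$ is the natural approach, and both bounds follow cleanly from it: the lower bound via $x\log x\ge -1/e$, and the upper bound via the monotonicity of $u\mapsto(1+\log u)/u$ on $[1,\infty)$ to capture the minimum of the two gaps. The only sub-case you leave implicit is $g<1\le s$ in the upper bound, where one needs $\tfrac{g}{s}\log(1+s)\le \tfrac{1}{s}\log(1+s)\le 1$ from $\log(1+s)\le s$; this is indeed routine.
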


\subsubsection{Ballot estimates for DRW}
In this subsection we include estimates for Ballot probabilities involving the DRW from the previous sub-section. They are either derived from~\cite{CHL17_supplement} in which more general DRW processes are treated, or taken directly from~\cite{CHL19}, in which the results from~\cite{CHL17_supplement} were specialized to the DRW from Subsection~\ref{s:2.1.2}. 

The first lemma provides bounds and asymptotics for the DRW Ballot event.
\begin{lem}[Essentially Lemma~3.4 in~\cite{CHL19}]
\label{lem:15}
There exists $C, C' > 0$ such that for all $0 \leq r < R\leq t$ and $w, v \in \bbR$,
\begin{equation}
\label{e:52}
\bbP \big( \cA_t(r,R)
	\,\big|\, \wh{W}_{t,r} = v \,,\,\, \wh{W}_{t,R} = w \big)
	\leq C \frac{(v^- + 1)(w^- + 1)}{R-r} \,.
\end{equation}
Also, there exists non-increasing functions $g: \bbR \to (0,\infty)$ and $f^{(r)}: \bbR \to (0,\infty)$ for $r \geq 0$, such that for all such $r$
\begin{equation}
\label{e:53}
\bbP \big( \cA_t(r,t)
	\,\big|\, \wh{W}_{t,r} = v \,,\,\, \wh{W}_{t,t} = w \big)  
	\sim 2 \frac{f^{(r)}(v) g(w)}{t-r},
\end{equation}
as $t \to \infty$ uniformly in $v,w$ satisfying $v,w < 1/\epsilon$ and $(v^-+1)(w^-+1) \leq t^{1-\epsilon}$ for any fixed $\epsilon > 0$. Moreover, 
\begin{equation}
\label{e:53.5}
\lim_{v \to \infty} \frac{f^{(r)}(-v)}{v} = \lim_{w \to \infty} \frac{g(-w)}{w} = 1  \,,
\end{equation}
for any $r \geq 0$. Finally there exists $f: \bbR \to (0, \infty)$ such that for all $v \in \bbR$, 
\begin{equation}
\label{e:54}
f^{(r)}(v) \overset{r \to \infty} \longrightarrow f(v) \,.
\end{equation}
\end{lem}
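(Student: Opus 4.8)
The plan is to deduce the four assertions of the lemma from the general decorated‑ballot theory of~\cite{CHL17_supplement}, in the same way that Lemma~3.4 of~\cite{CHL19} is obtained, while additionally carrying the parameter $r$ and the claimed uniformities through that argument. I would start by rewriting $\cA_t(r,R) = \{\wh{W}_{t,\sigma_k} \leq -\wh{h}^{\sigma_k*}_{\sigma_k}\ \text{for all}\ \sigma_k \in \cN \cap [r,R]\}$, which displays it as a ballot event for the process $-\wh{W}_{t,\cdot}$ required to stay above the random barrier obtained by sampling the i.i.d.\ decorations $(\wh{h}^{\sigma*}_\sigma)_\sigma$ (the centered BBM maxima $h^{\sigma*}_\sigma - m_\sigma$) at the Poisson times $\cN$. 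Applying the general theory then requires checking three things, which I would verify in order.

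\emph{(1) The walk.} Here $\wh{W}_{t,s} = W_s - \gamma_{t,s}$ with $\gamma_{t,s}$ satisfying $|\gamma_{t,s}| \leq 1 + \log^+\!\big(s \wedge (t-s)\big)$ by Lemma~\ref{l:lisa} (used with $r=0$; the $r$‑versions follow by recentring the time origin at $r$ and using the full Lemma~\ref{l:lisa}), so $\wh{W}_{t,\cdot}$ is a uniformly sub‑linear curvilinear perturbation of Brownian motion and, sampled at the rate‑$2$ Poisson times, a random walk whose increments have Gaussian tails. \emph{(2) The decorations.} One needs $\wh{h}^{\sigma*}_\sigma$ to be, uniformly in $\sigma \geq 0$, stochastically dominated by a fixed variable $D$ with $\bbE\,\rme^{\theta D} < \infty$ for some $\theta > 0$, and to have a thin left tail: this follows from the Bramson--Lalley--Sellke right‑tail bound $\bbP(\wh{h}^*_\sigma > y) \leq C(1+y)\rme^{-\sqrt{2}y}$ valid uniformly in $\sigma$ (for $\sigma < 1$ use $m_\sigma = 0$ and the Gaussian bound on $h^*_\sigma$ directly)~\cite{B_C,LS}, together with the doubly‑exponential lower tail of $\wh{h}^*_\sigma$; in particular every polynomial moment of $(\wh{h}^{\sigma*}_\sigma)^\pm$ is bounded uniformly in $\sigma$. \emph{(3) The time stamps.} The rate‑$2$ Poisson process puts $\Theta(R-r)$ constraints in $[r,R]$ with sub‑exponential concentration, which is what makes the ballot probability scale like $1/(R-r)$ rather than like the reciprocal of the (random) number of constraints.

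Given these inputs, the general theorem of~\cite{CHL17_supplement} yields all four assertions. The two‑sided estimate $\asymp (v^-+1)(w^-+1)/(R-r)$, whose upper half is~\eqref{e:52}, is the decorated analogue of the Brownian estimate in Lemma~\ref{l:102.1}, the $+1$'s absorbing the regime where an endpoint lies above the barrier and the decorations entering only through bounded multiplicative corrections coming from their finite moments. The sharp asymptotics~\eqref{e:53} is the renewal/harmonic‑function analysis at the two ends: $g$ is the normalized harmonic function governing the descent to the right endpoint $w$ at time $t$, which in the limit $t\to\infty$ sees a barrier flat at $0$ and hence depends on neither $r$ nor $t$, while $f^{(r)}$ governs the ascent from the left endpoint $v$ at time $r$ and sees the barrier $u \mapsto \tfrac{3}{2\sqrt{2}}\log^+(r+u)$, whence its dependence on $r$. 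The slopes~\eqref{e:53.5} are the standard linear growth of such harmonic functions at $-\infty$, normalized (consistently with the Brownian $2xy/t$) to slope $1$. Finally~\eqref{e:54} holds because $\tfrac{3}{2\sqrt{2}}\big(\log^+(r+u) - \log^+ r\big) \to 0$ uniformly on compacts in $u$ as $r\to\infty$ — again with Lemma~\ref{l:lisa} controlling the non‑compact part — so the left‑end barrier stabilizes to a flat one and the construction of $f^{(r)}$ converges to a limit $f$.

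The main obstacle is the decoration tail with the correct rate: the exponent $\sqrt{2}$ in $\bbP(\wh{h}^*_\sigma > y) \lesssim (1+y)\rme^{-\sqrt{2}y}$ must match the linear drift hidden in $m_\sigma = \sqrt{2}\sigma - \tfrac{3}{2\sqrt{2}}\log^+\sigma$, and the bound must hold uniformly down to moderate $\sigma$; securing this uniform right tail is where Bramson's F‑KPP convergence estimates are used, and it is the one ingredient that is genuinely BBM‑specific rather than formal. A secondary nuisance is the bookkeeping of uniformity over $v,w < 1/\epsilon$ with $(v^-+1)(w^-+1) \leq t^{1-\epsilon}$ and over $r$ — \cite{CHL19} states its Lemma~3.4 for fixed $r$ — which one handles by rerunning that proof with $r$ carried as a parameter and invoking the compact‑uniform convergence $f^{(r)} \to f$ for the needed equicontinuity.
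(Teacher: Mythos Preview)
Your proposal is correct and takes essentially the same approach as the paper: both deduce the result from the general decorated-ballot theory of~\cite{CHL17_supplement} by verifying its hypotheses for the DRW, with Lemma~\ref{l:lisa} supplying the required control on the barrier curve $\gamma_{t,\cdot}$. The paper's proof is in fact just a one-line citation of Lemma~3.4 in~\cite{CHL19}, noting that the only new content here is allowing the right endpoint $R \leq t$ in~\eqref{e:52} (the $r$-dependence via $f^{(r)}$ is already present in~\cite{CHL19}), and that this extension goes through by applying Lemma~\ref{l:lisa} with $R$ in place of $t$ to check the barrier condition on the shortened interval $[r,R]$.
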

\begin{proof} This is nearly verbatim from Lemma~3.4 in~\cite{CHL19}, except that for the upper bounds, we allow the endpoint to be any $R\leq t$. The exact same proof there goes through (i.e.~applying Lemma \ref{l:lisa} with $R$ in place of $t$ to check the barrier curve condition of~\cite{CHL17_supplement} is satisfied), and we omit the repetitive details.
\end{proof}

The next lemma shows that under the Ballot event, the process $\wh{W}_{t,\cdot}$ is (entropically) repelled away from $0$.
\begin{lem}
\label{l:10.7.0}
For all $M > 0$, $2 \leq s \leq t/2$,
\begin{equation}
\bbP \Big( \max_{u\in[s,\, t/2]}
	\wh{W}_{t,u} > -M \,\Big|\, \wh{W}_{t,0} = 0 \,,\,\, \wh{W}_{t,t} = 0\,,\,\,\cA_t\Big)
	\leq C \frac{(M+1)^2}{\sqrt{s}} \,.
\end{equation}
\end{lem}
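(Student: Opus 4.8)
The plan is to rewrite the event as a first-passage event for $\wh{W}_{t,\cdot}$ and to estimate it using the Ballot bounds of Lemma~\ref{lem:15}. Throughout write $\ol{\bbP}(\cdot) := \bbP(\cdot \mid \wh{W}_{t,0} = 0, \wh{W}_{t,t} = 0)$, while $\bbP(\cdot \mid W_0 = 0)$ keeps its usual meaning. I first record a lower bound on the conditioning event: taking $r = 0$ and $v = w = 0$ in~\eqref{e:53} gives $\ol{\bbP}(\cA_t) \sim 2 f^{(0)}(0) g(0)/t$ as $t \to \infty$, and since $f^{(0)}(0), g(0) \in (0,\infty)$ there are $t_0 < \infty$ and $c_0 > 0$ with $\ol{\bbP}(\cA_t) \ge c_0/t$ for all $t \ge t_0$. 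When $t < t_0$ one has $s < t_0/2$, so the assertion is vacuous as soon as $C$ is chosen with $C/\sqrt{t_0/2} \ge 1$; hence I may and will assume $t \ge t_0$ below.

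Next, introduce $\tau := \inf\{u \ge s :\: \wh{W}_{t,u} \ge -M\}$. By continuity, $\{\max_{u \in [s,t/2]} \wh{W}_{t,u} > -M\} \subseteq \{\tau \le t/2\}$, and on $\{\tau \le t/2\}$ one has $\wh{W}_{t,\tau}^- \le M$ (either $\tau = s$ with $\wh{W}_{t,s} \ge -M$, or $\tau > s$ with $\wh{W}_{t,\tau} = -M$). Since $\tau$ depends only on $W$ it is a stopping time of the DRW, a.s.\ no atom of $\cN$ sits at $\tau$, and the DRW restarted at $\tau$ is a fresh DRW issued from $\wh{W}_{t,\tau}$, independent of the stopped $\sigma$-field $\cG_\tau$. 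Realizing the endpoint conditioning $\wh{W}_{t,t} = 0$ as a disintegration over the (Gaussian) endpoint of $W$ under $\bbP(\cdot \mid W_0 = 0)$, using $\cA_t = \cA_t(0,\tau) \cap \cA_t(\tau, t)$ on $\{\tau \le t/2\}$ with $\cA_t(0,\tau) \in \cG_\tau$ and $\cA_t(0,\tau) \subseteq \cA_t(0,s)$ (because $\tau \ge s$), and writing $\Phi_t(u, v, y)$ for the density in $y$ of $\bbP(\cA_t(u,t), \wh{W}_{t,t} \in \cdot \mid \wh{W}_{t,u} = v)$, the strong Markov property yields
\begin{equation}
\ol{\bbP}\big(\{\tau \le t/2\} \cap \cA_t\big) \;\le\; \sqrt{2\pi t}\;\cdot\; \bbP\big(\cA_t(0,s) \mid W_0 = 0\big)\;\cdot\; \sup_{0 \le u \le t/2,\; v^- \le M} \Phi_t(u, v, 0)\,.
\end{equation}

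It then remains to bound the two non-trivial factors. For the first, I condition on $\wh{W}_{t,s} = v$ and apply~\eqref{e:52} with $(r,R) = (0,s)$ together with $\bbP(\wh{W}_{t,s} \in \rmd v \mid W_0 = 0) \le (2\pi s)^{-1/2}\rmd v$ and $|\gamma_{t,s}| \lesssim \log(s+2) \lesssim \sqrt{s}$ (the latter from Lemma~\ref{l:lisa} with $r = 0$), obtaining $\bbP(\cA_t(0,s) \mid W_0 = 0) \lesssim s^{-3/2}\int (v^- + 1)\,\rme^{-(v + \gamma_{t,s})^2/(2s)}\,\rmd v \lesssim s^{-1/2}$. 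For $\Phi_t$, I factor it as $\bbP(W_t \in \rmd y \mid W_u = v + \gamma_{t,u}) \cdot \bbP(\cA_t(u,t) \mid \wh{W}_{t,u} = v, \wh{W}_{t,t} = y)$, bound the first factor (a Gaussian density over time span $t - u \ge t/2$) by $(\pi t)^{-1/2}\rmd y$, and bound the second, via~\eqref{e:52} with $(r,R) = (u,t)$ and $v^- \le M$, by $C(v^- + 1)(y^- + 1)/(t - u) \le 2C(M+1)(y^- + 1)/t$; hence $\Phi_t(u,v,0) \lesssim (M+1)\,t^{-3/2}$ uniformly for $u \le t/2$, $v^- \le M$. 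Feeding these into the display gives $\ol{\bbP}(\{\tau \le t/2\} \cap \cA_t) \lesssim (M+1)\,t^{-1}s^{-1/2}$, and dividing by $\ol{\bbP}(\cA_t) \ge c_0/t$ yields $\ol{\bbP}(\max_{u \in [s,t/2]} \wh{W}_{t,u} > -M \mid \cA_t) \lesssim (M+1)/\sqrt{s} \le (M+1)^2/\sqrt{s}$, which is the claim (with room to spare).

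The hard part will be the bookkeeping in the strong Markov step leading to the displayed inequality: one has to verify that $\tau$ is a stopping time for the DRW filtration (it is a functional of $W$ alone), that the restarted process is a genuinely fresh DRW (memorylessness of $\cN$, the i.i.d.\ structure of $H$, and the a.s.\ absence of an atom of $\cN$ exactly at $\tau$, which holds because $\tau$ has a continuous law independent of $\cN$), and that the endpoint conditioning is correctly handled as a disintegration --- in particular that the spurious factor $(y^-+1)$ generated by the upper Ballot bound~\eqref{e:52} only needs to be evaluated at $y = 0$. The one further non-routine ingredient is the lower bound $\ol{\bbP}(\cA_t) \gtrsim 1/t$, extracted from the sharp asymptotic~\eqref{e:53}, which is the reason for treating $t < t_0$ separately.
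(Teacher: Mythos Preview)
Your argument is correct. The paper's own proof is essentially a black-box citation: it invokes Lemma~2.7 of~\cite{CHL17_supplement} for the bound
\[
\bbP_{0,0}^{t,0}\Big(\max_{u\in[s,t/2]}\wh{W}_{t,u}>-M,\ \cA_t\Big)\le C\,\frac{(M+1)^2}{t\sqrt{s}},
\]
and then divides by the lower bound $\bbP_{0,0}^{t,0}(\cA_t)\gtrsim t^{-1}$ from Lemma~\ref{lem:15}. You instead give a self-contained derivation of the numerator via a first-passage stopping time $\tau$, the strong Markov property for the DRW (which works cleanly here because $\tau$ is a functional of $W$ alone and $(\cN,H)$ are independent of $W$), disintegration over the Gaussian endpoint, and two applications of the Ballot upper bound~\eqref{e:52}. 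This is almost certainly the same mechanism that underlies the cited lemma, so the approaches are not conceptually different; what you gain is that the proof is now internal to the paper and, as you note, yields the sharper bound $(M+1)/\sqrt{s}$ rather than $(M+1)^2/\sqrt{s}$. One minor remark: your phrase ``fresh DRW issued from $\wh{W}_{t,\tau}$'' is slightly imprecise, since the decorations $\wh{h}^{\sigma_k*}_{\sigma_k}$ depend on the absolute time $\sigma_k$ and not on $\sigma_k-\tau$; but your actual use of this via $\Phi_t(u,v,y)$ (which correctly carries the absolute time $u$) is fine.
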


\begin{proof}
Lemma~2.7 in~\cite{CHL17_supplement}, specialized to $W, (-\wh{h}^{s*}_s)_{s \geq 0}, \cN, \gamma$ as $W, Y, \cN, \gamma$ immediately gives
\begin{equation}
\bbP \Big( \max_{s\in[s,\, t/2]} \wh{W}_{t,u} > -M,\, \cA_t \,\Big|\, \wh{W}_{t,0} = 0 \,,\,\, \wh{W}_{t,t} = 0\Big)
	\leq C \frac{(M+1)^2}{t\sqrt{s}} \,,
\end{equation}
after noting that $\cQ_t(0,t)$ identifies with $\cA_t$ under the above specialization. Together with the lower bound,
\begin{equation}
\bbP \big( \cA_t \,\big|\, \wh{W}_{t,0} = 0 \,,\,\, \wh{W}_{t,t} = 0\big)
	\geq c \frac{1}{t} \,,
\end{equation}
which is implied by Lemma~\ref{lem:15}, this gives the desired statement.
\end{proof}

The next lemma shows that one can replace the restriction on $\wh{W}_{t,u}$ for $u \in [s,t-s]$ under $\cA_t$ with the condition that $W_{u}$ (without the additional $\gamma_{t,s}$) stays negative within this interval, with the difference between the probabilities of the two events being of smaller order.
\begin{lem}
\label{l:2.13}
For $0 < r < R < t$, denote the event
\begin{equation} \wt{\cA}_{t}(r, R) := \{\cA_t(0,r) \cap \cA_t(R,t) \cap \{\max W_{[r,R]} \leq 0\} \}.
\end{equation}
There exists $C > 0$ such that for all $2 < s < t/2$ \,,
\begin{equation}
    \bbP \Big(\cA_{t} \, \Delta \,  \wt{\cA}_{t}(s, t-s)\,\Big|\, \wh{W}_{t,0} = 0 \,,\,\, \wh{W}_{t,t} = 0 \Big) \leq C s^{-1/4} t^{-1} \,.
\end{equation}
\begin{proof}
The proof of this exact statement appears in the beginning of the proof of Lemma~2.8 in~\cite{CHL17_supplement}, specialized to $W, (-\wh{h}^{s*}_s)_{s \geq 0}, \cN, \gamma$ as $W, Y, \cN, \gamma$.
\end{proof}

\end{lem}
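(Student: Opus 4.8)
\emph{Setup and reduction.} Write $\wt\cA_t:=\wt\cA_t(s,t-s)$. Since $\gamma_{t,0}=\gamma_{t,t}=0$, conditioning on $\{\wh W_{t,0}=\wh W_{t,t}=0\}$ is the same as conditioning on $\{W_0=W_t=0\}$, so under $\bbP^{\circ}:=\bbP(\cdot\mid W_0=W_t=0)$ (with expectation $\bbE^{\circ}$) the process $W$ is a Brownian bridge over $[0,t]$; note also that, by Lemma~\ref{l:lisa} with $r=0$, $\gamma_{t,u}\ge 0$ for all $u\in[s,t-s]$, which I will use repeatedly. The plan is to split the symmetric difference along the three sub-intervals. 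From $\cA_t=\cA_t(0,s)\cap\cA_t(s,t-s)\cap\cA_t(t-s,t)$ and the definition of $\wt\cA_t$ one reads off
\begin{equation}
\cA_t\setminus\wt\cA_t=\cA_t\cap\{\max W_{[s,t-s]}>0\},
\qquad
\wt\cA_t\setminus\cA_t\subseteq\cA_t(0,s)\cap\cA_t(t-s,t)\cap\{\max W_{[s,t-s]}\le 0\}\cap\cA_t(s,t-s)^{\rmc},
\end{equation}
and I would bound each piece by $\lesssim s^{-1/4}t^{-1}$. Two inputs are used throughout: $\bbP^{\circ}(\cA_t)\le C/t$, which is~\eqref{e:52} with $r=0,R=t$; and $\bbP^{\circ}(\wt\cA_t)\le C/t$, obtained by conditioning on $(W_s,W_{t-s})$, bounding the two end factors via~\eqref{e:52}, the middle factor via Lemma~\ref{l:102.1}, and Cauchy--Schwarz on the Gaussian weights (for $s$ close to $t/2$ one instead drops the middle factor, both ballot events then living on near-halves of $[0,t]$).

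\emph{The piece $\cA_t\setminus\wt\cA_t$.} Here I would write $\bbP^{\circ}(\cA_t\setminus\wt\cA_t)=\bbP^{\circ}(\cA_t)\,\bbP^{\circ}\big(\max W_{[s,t-s]}>0\bigm|\cA_t\big)$ and control the conditional factor by entropic repulsion. Cover $[s,t/2]$ by dyadic blocks $[2^{j},2^{j+1}]$, $j\ge\lfloor\log_2 s\rfloor$; on such a block Lemma~\ref{l:lisa} gives $0\le\gamma_{t,u}\le C(1+j)=:M_j$, and since $W_u>0\iff\wh W_{t,u}>-\gamma_{t,u}$ this yields $\{\max_{[2^{j},2^{j+1}]}W>0\}\subseteq\{\max_{[2^{j},t/2]}\wh W_{t,\cdot}>-M_j\}$, whose $\bbP^{\circ}(\cdot\mid\cA_t)$-probability is at most $C(M_j+1)^{2}2^{-j/2}$ by Lemma~\ref{l:10.7.0}. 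Summing this geometric-type series over $j$, and treating $[t/2,t-s]$ by the time-reversed version of Lemma~\ref{l:10.7.0}, gives $\bbP^{\circ}\big(\max W_{[s,t-s]}>0\mid\cA_t\big)\lesssim(\log s)^{2}s^{-1/2}$, hence $\bbP^{\circ}(\cA_t\setminus\wt\cA_t)\lesssim(\log s)^{2}s^{-1/2}t^{-1}\le Cs^{-1/4}t^{-1}$. The role of the dyadic decomposition is precisely to replace the crude uniform estimate $\sup_{u\in[s,t-s]}\gamma_{t,u}\asymp\log t$ by $\asymp\log s$, which is what makes the repelling gain at the smallest scale dominate.

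\emph{The piece $\wt\cA_t\setminus\cA_t$.} Fix a small $\theta\in(0,\tfrac12)$ and set $d_u:=(u-s)\wedge(t-s-u)$. The key structural point is that on $\wt\cA_t$ the ballot events $\cA_t(0,s)$ and $\cA_t(t-s,t)$ pin $W_s$ and $W_{t-s}$ to be of order $-\sqrt s$, and the constraint $\max W_{[s,t-s]}\le 0$ then keeps the bridge well below $0$ throughout $[s,t-s]$; concretely, I would show that for $\theta$ small enough the ``good'' event
\begin{equation}
\cG:=\Big\{W_u\le-\big(s+d_u\big)^{\theta}\ \text{ for all }u\in[s,t-s]\Big\}
\end{equation}
satisfies $\bbP^{\circ}(\cG^{\rmc}\mid\wt\cA_t)\lesssim s^{-1/4}$. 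Granting this, on $\cG$ one has $-\wh W_{t,\sigma_k}=\gamma_{t,\sigma_k}-W_{\sigma_k}\ge(s+d_{\sigma_k})^{\theta}$ for every atom $\sigma_k\in[s,t-s]$, so by the standard upper tail $\bbP(\wh h^{u*}_u>y)\le C(1+y)\rme^{-\sqrt2 y}$ $(y\ge 0$, uniformly in $u\ge 1)$ and the eventual monotonicity of $y\mapsto(1+y)\rme^{-\sqrt2 y}$, $\bbP(\wh h^{\sigma_k*}_{\sigma_k}>-\wh W_{t,\sigma_k}\mid W)\le C\big(1+(s+d_{\sigma_k})^{\theta}\big)\rme^{-\sqrt2(s+d_{\sigma_k})^{\theta}}$. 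Summing over the rate-$2$ atoms of $\cN$ in $[s,t-s]$ --- i.e.\ integrating $2\,\rmd u$ over the interval, using $s+d_u=u$ on $[s,t/2]$, and a change of variables --- bounds this random sum by $\lesssim s\,\rme^{-\sqrt2 s^{\theta}}$ on $\cG$, \emph{with no factor of $t$}: the integrand decays fast enough that the contribution is dominated by the atoms near the two endpoints of $[s,t-s]$, the bulk contribution (where $-\wh W_{t,\cdot}$ is of order $t^{\theta}$) being even smaller. Taking expectations, splitting off $\cG^{\rmc}$, and using that the end-interval data is conditionally independent of the middle-interval data given $W$, one gets $\bbP^{\circ}(\wt\cA_t\setminus\cA_t)\le\bbP^{\circ}(\wt\cA_t)\big(O(s^{-1/4})+O(s\,\rme^{-\sqrt2 s^{\theta}})\big)\lesssim s^{-1/4}t^{-1}$.

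\emph{Main obstacle.} The first piece is routine given Lemma~\ref{l:10.7.0} and the dyadic idea. The real work is concentrated in the good-event estimate $\bbP^{\circ}(\cG^{\rmc}\mid\wt\cA_t)\lesssim s^{-1/4}$, uniformly over $2<s<t/2$: this is an entropic-repulsion statement for a Brownian bridge whose two endpoints are themselves pinned (to order $-\sqrt s$) by Ballot events, and it would be proved by combining the Dvoretzky--Erd\H{o}s-type lower envelope for Bessel-$3$ (Lemma~\ref{D-E}, imported through the total-variation approximation of Lemma~\ref{l:2.2}) on the bulk of $[s,t-s]$ with Lemma~\ref{l:102.1} near the endpoints. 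One needs $\theta$ strictly below $\tfrac12$ and small enough that both the endpoint and the union-over-scales contributions to $\bbP^{\circ}(\cG^{\rmc}\mid\wt\cA_t)$ are $O(s^{-1/4})$, while any $\theta>0$ already makes the stretched-exponential factor $\rme^{-\sqrt2 s^{\theta}}$ absorb the decoration contribution. This balancing is exactly the computation performed at the start of the proof of Lemma~2.8 in~\cite{CHL17_supplement}, in its general decorated-random-walk formulation.
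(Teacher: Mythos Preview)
Your proposal is essentially a detailed reconstruction of the argument the paper cites from~\cite{CHL17_supplement}: the two-piece decomposition of the symmetric difference, the dyadic entropic-repulsion bound for $\cA_t\setminus\wt\cA_t$, and the good-event/decoration-tail split for $\wt\cA_t\setminus\cA_t$ all match the structure there, and you correctly identify the good-event estimate $\bbP^{\circ}(\cG^{\rmc}\mid\wt\cA_t)\lesssim s^{-1/4}$ as the main work (which you, like the paper, ultimately defer to that reference). One minor correction: Lemma~\ref{l:lisa} with $r=0$ only gives $\gamma_{t,u}\ge -\tfrac{3}{2\sqrt2}$, not $\gamma_{t,u}\ge 0$; this constant offset is harmless in every place you use it, but the inequality $-\wh W_{t,\sigma_k}\ge (s+d_{\sigma_k})^{\theta}$ on $\cG$ should carry an additive $-O(1)$, which is absorbed into the stretched exponential.
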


\section{Almost sure growth of extremal level sets}
\label{s:3}
    In this section we prove Theorem~\ref{1st_result}. 
In what follows, we work with a probability space on which $\cE^*$, $\cE$ and $(\cC^k)_k$ are defined as in~\eqref{e:5.5} and~\eqref{e:O.2}. Following~\cite{CHL19}, given a subset $B \subseteq \bbR$ we define
\begin{equation}
\cE (\cdot \;; B) := \sum_{k} \cC^k ( \cdot - u_k) 1_{\{u_k \in B\}}  \,,
\end{equation}
This process is a {\em restricted} version of the limiting extremal process $\cE$, in which only contributions form clusters whose tip is at $B$ are recorded.

As mentioned in Subsection~\ref{s:1.1}, for $v > 0$, we control the contributions from tips above and below $-\delta \log v$ separately, where $\delta$ is any real number in $(0, 1/\sqrt{2})$. For the first of the two, the main tool is the following moment estimate, which was essentially derived in~\cite{CHL19}.
\begin{lem}
\label{l:3.1}
 Let $-\infty < -v \leq w \leq z < \infty$. Then
	\begin{equation}\label{eq:cluster_1st_moment_estimate}
		\bbE \left[\cE\big([-v, \infty) ; \; [w, z] \big)\big\vert Z\right] \sim C_\star Z e^{\sqrt{2}v} (z-w)
	\end{equation}
as $w+v \to \infty$, uniformly in $z$, and as $z+v\to \infty$, uniformly in $w$. Moreover, there exists constants $C \in (0, \infty)$ such that
	\begin{align}
		\bbE \left[\cE\big([-v, \infty) ; \; [w, z] \big)\big\vert Z\right] &\leq C Z e^{\sqrt{2}v} (z-w) \label{eq:cluster_1nd_moment_estimate_upper} \\
		\Var \left[\cE\big([-v, \infty) ; \; [w, z] \big)\big\vert Z\right] &\leq C Z e^{2\sqrt{2}v+\sqrt{2}z} (z+v). \label{eq:cluster_2nd_moment_estimate}
	\end{align}
\end{lem}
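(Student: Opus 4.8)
The plan is to reduce everything to the moment structure of a single cluster $\cC \sim \nu$ superimposed along the Poisson point process $\cE^* \sim \mathrm{PPP}(Z e^{-\sqrt2 u}\,\rmd u)$, and then compute conditionally on $Z$ (so $\cE^*$ is a genuine Poisson process with a finite intensity on any set bounded below). Recall $\cE(\cdot\,;B) = \sum_k \cC^k(\cdot - u_k)\1_{\{u_k \in B\}}$ where the $\cC^k$ are i.i.d.\ copies of $\cC$, independent of $\cE^*$. Thus
\[
\cE\big([-v,\infty)\,;\,[w,z]\big) = \sum_{k:\, u_k \in [w,z]} \cC^k\big([-v - u_k,\infty)\big).
\]
By the Campbell / Mecke formula for Poisson processes, conditionally on $Z$,
\[
\bbE\big[\cE([-v,\infty)\,;\,[w,z])\,\big|\,Z\big]
= Z\int_w^z \bbE\,\cC\big([-v-u,0]\big)\, e^{-\sqrt2 u}\,\rmd u,
\]
using that $\cC$ is supported on $(-\infty,0]$ so $\cC([-v-u,\infty)) = \cC([-v-u,0])$. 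Here $-v - u$ ranges over $[-v-z,-v-w]$, and since $w + v \to \infty$ (resp. $z+v\to\infty$) the argument $-v-u$ tends to $-\infty$ uniformly, so I can plug in the first-moment asymptotics~\eqref{e:29}, $\bbE\,\cC([-r,0]) = C_\star e^{\sqrt2 r}(1+o(1))$ with $r = v+u$. Then
\[
Z\int_w^z C_\star e^{\sqrt2(v+u)}(1+o(1))\,e^{-\sqrt2 u}\,\rmd u
= C_\star Z e^{\sqrt2 v}(z-w)(1+o(1)),
\]
which is exactly~\eqref{eq:cluster_1st_moment_estimate}; the uniformity statements are inherited directly from the uniformity in~\eqref{e:29}. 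The crude upper bound~\eqref{eq:cluster_1nd_moment_estimate_upper} follows the same way from the upper-bound half of~\eqref{e:29}, i.e.\ $\bbE\,\cC([-r,0]) \le C e^{\sqrt2 r}$ for all $r$ (valid for $r$ bounded away from $-\infty$; for $r$ in a compact set the bound is trivial since $\cC([-r,0])$ is dominated by the full mass considerations — one uses monotonicity in $r$), giving $\le C Z e^{\sqrt2 v}(z-w)$.

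For the variance~\eqref{eq:cluster_2nd_moment_estimate} I use that for a Poisson process the variance of $\sum_k F(u_k)$ with independent marks is again a single integral (no covariance cross-terms survive because distinct atoms are independent): conditionally on $Z$,
\[
\Var\big[\cE([-v,\infty)\,;\,[w,z])\,\big|\,Z\big]
= Z\int_w^z \bbE\Big[\cC\big([-v-u,0]\big)^2\Big]\, e^{-\sqrt2 u}\,\rmd u.
\]
So everything hinges on a second-moment bound $\bbE\,\cC([-r,0])^2 \le C\, r\, e^{2\sqrt2 r}$ for large $r$ — this is precisely (the substance of) the second-moment estimate for cluster level sets established in~\cite{CHL19} (the $r$-prefactor reflecting the atypically large level-set event discussed in the introduction). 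Granting this, substituting $r = v+u \in [v+w, v+z]$ and bounding $e^{2\sqrt2(v+u)}\cdot r \le e^{2\sqrt2 v} e^{2\sqrt2 u}(v+z)$ gives
\[
Z\int_w^z C (v+z) e^{2\sqrt2 v} e^{2\sqrt2 u} e^{-\sqrt2 u}\,\rmd u
= C Z e^{2\sqrt2 v}(v+z)\int_w^z e^{\sqrt2 u}\,\rmd u
\le C' Z e^{2\sqrt2 v + \sqrt2 z}(v+z),
\]
which is~\eqref{eq:cluster_2nd_moment_estimate} (after renaming constants and absorbing the $(z-w)$ from the $w \le z$ range into a harmless factor, or simply noting $\int_w^z e^{\sqrt2 u}\rmd u \le \tfrac{1}{\sqrt2}e^{\sqrt2 z}$).

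I expect the main obstacle to be the cluster second-moment input $\bbE\,\cC([-r,0])^2 = O(r\,e^{2\sqrt2 r})$: the first-moment asymptotic~\eqref{e:29} is quoted, but the matching second-moment bound needs to be either extracted from~\cite{CHL19} or re-derived via the weak (DRW) representation of $\nu$ from Section~\ref{s:2.1}, decomposing the squared level-set count along pairs of decorations $(\sigma_k,\sigma_\ell)$ attached to the spine and using the Ballot estimate~\eqref{e:52} together with the known right-tail of the BBM maximum to control the contribution of each decoration. The $r$-prefactor (as opposed to a clean $e^{2\sqrt2 r}$) is the delicate point and is exactly where the "atypically large cluster with probability $\Theta(r^{-1})$" phenomenon enters. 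Everything else is a mechanical application of Campbell's formula and the already-quoted one-cluster moment estimates, with the uniformity in $z$ (resp. $w$) passing through verbatim from the uniformity in~\eqref{e:29}. A minor care point: one must check that for $r$ in a bounded set (which can happen at the edge of the ranges where $w+v$ or $z+v$ is only moderately large, though the hypotheses push these to $\infty$) the moment bounds still hold — this is immediate from continuity/monotonicity of $r \mapsto \bbE\,\cC([-r,0])$ and its square.
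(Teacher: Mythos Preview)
Your approach via Campbell's formula and the cluster moment bounds from~\cite{CHL19} is essentially the same route the paper takes (the paper simply cites equations~(6.5)--(6.7) of~\cite{CHL19}, which are derived exactly this way). The upper bounds~\eqref{eq:cluster_1nd_moment_estimate_upper} and~\eqref{eq:cluster_2nd_moment_estimate} are fine as you wrote them.

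There is, however, a genuine gap in your argument for the asymptotic~\eqref{eq:cluster_1st_moment_estimate} in the second regime (``$z+v\to\infty$, uniformly in $w$''). You claim that ``the argument $-v-u$ tends to $-\infty$ uniformly'', but this is false here: $r = v+u$ ranges over $[v+w,\,v+z]$, and since we only assume $w \ge -v$, the lower endpoint $v+w$ can remain bounded (even equal to $0$) while $v+z\to\infty$. So you cannot simply plug the asymptotic $\bbE\,\cC([-r,0]) = C_\star e^{\sqrt{2}r}(1+o(1))$ into the whole integral. What is needed is a Ces\`aro-type argument: the integrand $g(r) := e^{-\sqrt{2}r}\bbE\,\cC([-r,0])$ is bounded (by the upper bound you already have) and converges to $C_\star$ as $r\to\infty$, and one must show $\int_a^b g(r)\,\rmd r \sim C_\star(b-a)$ as $b\to\infty$ uniformly in $a\in[0,b]$. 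The paper does exactly this by splitting the interval at an intermediate point $w'$ with $w'+v = \sqrt{z+v}$, using~\eqref{eq:cluster_1nd_moment_estimate_upper} on $[w,\,w\vee w']$ and the already-established first regime on $[w\vee w',\,z]$, then noting $(w'-w)^+ = o(z-w)$. Your argument for the first regime ($w+v\to\infty$) is correct as stated, since there the lower endpoint does go to infinity.
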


\begin{proof} The upper bounds \eqref{eq:cluster_1nd_moment_estimate_upper} and \eqref{eq:cluster_2nd_moment_estimate} are equation (6.5) and (6.6) in \cite{CHL19}. The asymptotic \eqref{eq:cluster_1st_moment_estimate} is equation (6.7) in \cite{CHL19}, though only stated there for the regime $w+v\to \infty$, uniformly in $z$. Let us check \eqref{eq:cluster_1st_moment_estimate} for the regime $z+v\to \infty$, uniformly in $w$. Take $w'$ with $w'+v = \sqrt{z+v}$, then $w'+v \to \infty$ and $w'+v = o(z+v)$ as $z+v\to \infty$. Then we have
	\begin{align}
		\bbE \left[\cE\big([-v, \infty) ; \; [w, z] \big)\big\vert Z\right]
		&= \bbE \left[\cE\big([-v, \infty) ; \; [w, w \vee w'] \big)\big\vert Z\right] + \bbE \left[\cE\big([-v, \infty) ; \; [w \vee w', z] \big)\big\vert Z\right] \\
		&= O(1) Z e^{\sqrt{2}v} (w'-w)^+ + (1+o(1)) C_\star Z e^{\sqrt{2}v} (z-w - (w'-w)^+)
	\end{align} 
with $O(1), o(1)$ are uniform in the regime considered, where we used \eqref{eq:cluster_1st_moment_estimate} and \eqref{eq:cluster_1nd_moment_estimate_upper}. For $w \geq w'$ the conclusion is then immediate and for $w < w'$ it follows from the observation that $w'-w\leq w'+v = o((z+v)-(w'+v)) = o((z-w)-(w'-w)).$
	
\end{proof}

For the contribution coming from tips above $-\delta \log v$, we shall need 
the following key proposition, which shows that it is almost surely of negligible order compared to $\cE([-v, \infty))$.
\begin{prop}
\label{higher_leaders} For any $0< \delta < \frac{1}{\sqrt{2}}$ and $K \geq 0$, it holds almost surely that
	\begin{equation}\label{higher_leaders_est}
		\lim_{v\rightarrow \infty} \frac{\cE\big([-v, \infty) ; \; [-\delta \log v, \infty) \big)}{v^{-K} \rme^{\sqrt{2} v}} = 0.
	\end{equation}
\end{prop}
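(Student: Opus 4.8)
\medskip
\noindent\textbf{Proof proposal.}
The plan is to decompose the numerator in~\eqref{higher_leaders_est} cluster by cluster, to bound each contribution by the \emph{typical} cluster size (which by Theorem~\ref{2nd_result} is a super-polynomial factor smaller than $\rme^{\sqrt{2}v}$), and at the same time to control both the number of clusters that contribute and the range of their tips, so that a union bound along the geometric subsequence $v_n := 2^n$ together with Borel--Cantelli finishes the job. Writing $\cE^* = \{u^k\}_{k\geq 1}$ and recalling from~\twoeqref{e:5.5}{e:O.2} that, conditionally on $Z$, $\cE^*$ is a PPP with intensity $Z\rme^{-\sqrt{2}u}\,\rmd u$ while $(\cC^k)_k$ are i.i.d.\ with law $\nu$ and independent of $\cE^*$, we have
\begin{equation}
\cE\big([-v,\infty)\,;\,[-\delta\log v,\infty)\big) = \sum_{k\,:\,u^k \geq -\delta\log v}\cC^k\big([-(v+u^k),0]\big)\,.
\end{equation}

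First I would record two almost sure facts about $\cE^*$: that $U := \sup\cE^* < \infty$, and that $N_v := \cE^*\big([-\delta\log v,\infty)\big)$ satisfies $N_v \leq (\log v)^2\,v^{\sqrt{2}\delta}$ for all $v$ large. The first is immediate, since $\cE^*$ has only finitely many atoms above any fixed level. For the second, conditionally on $Z$ the variable $N_{v_n}$ is Poisson with mean $\tfrac{Z}{\sqrt{2}}\,2^{n\sqrt{2}\delta}$, so a Poisson tail bound makes $\bbP\big(N_{v_n} > (\log v_n)^2 v_n^{\sqrt{2}\delta}\big)$ summable in $n$, and Borel--Cantelli together with the monotonicity of $v \mapsto N_v$ yields the claim for all $v$. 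It is exactly here that the hypothesis $\delta < 1/\sqrt{2}$ enters: it forces $N_v$ to grow like a sublinear power of $v$.

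The hard part — and the main obstacle I anticipate — is the following uniform-in-$u$ strengthening of Theorem~\ref{2nd_result} for a single cluster: there exist a non-increasing $\psi:[2,\infty)\to(0,1]$ with $u^M\psi(u)\to 0$ for every $M>0$ (e.g.\ $\psi(u)=\rme^{-u^{1/2}}$), and $\phi(u_0)=O\big(u_0^{-1}(\log u_0)^{5/2}\big)$, such that for $\cC\sim\nu$,
\begin{equation}
\bbP\Big(\cC([-u,0]) > \psi(u)\,\rme^{\sqrt{2}u}\ \text{for some}\ u\geq u_0\Big) \leq \phi(u_0)\,,\qquad u_0\geq 2\,.
\end{equation}
I would establish this from the \emph{weak} representation of the cluster law, where sharp quantitative control is available, rather than from the strong one: by Lemma~\ref{l:7.0} and Lemma~\ref{l:5.2}, $\cC([-u,0])$ is realized, after sending $t,r\to\infty$ under the Ballot conditioning $\cA_t$, as a $\cN$-sum over Poissonian times $s$ of the mass an independent time-$s$ BBM extremal process $\cE_s^s$ puts on an interval of width $u$ whose top sits at $-\wh W_{t,s}$. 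Under $\cA_t$ the backbone $\wh W_{t,\cdot}$ is entropically repelled and, past a large time, tracks minus a Bessel-3 path up to a logarithmic correction (Lemmas~\ref{l:10.7.0}, \ref{l:2.13} and~\ref{l:2.2}), so $-\wh W_{t,s}=\Theta(\sqrt s)$; feeding this into first-moment and Ballot bounds for $\cE_s^s$ (Lemma~\ref{lem:15}) bounds the contribution of the decoration at time $s$ by $\rme^{\sqrt{2}u-\Theta(\sqrt s)-\Theta(u^2/s)+O(\log s)}$, and summing over $s$ — the dominant scale being $s=\Theta(u^{4/3})$ — gives $\rme^{\sqrt{2}u-\Theta(u^{2/3})}$, uniformly in $u\geq u_0$ outside the event that $\wh W_{t,\cdot}$ makes an atypically deep excursion. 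Quantifying the probability of that excursion event produces the bound $\phi(u_0)=O\big(u_0^{-1}(\log u_0)^{5/2}\big)$. This is a uniform-in-$u$ refinement of the level-set analysis of~\cite{CHL19,CHL20}, and it is where the bulk of the work lies; the limits $t,r\to\infty$ taken under the (increasingly singular) conditioning must be handled with the explicit rates from those references.

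Finally I would assemble the pieces on the intersection of the two almost sure events above. Fix $v\in[v_n,v_{n+1})$ with $n$ large. Every cluster in the displayed sum has tip $u^k\in[-\delta\log v,\,U]\subseteq[-\delta(n{+}1)\log 2,\,U]$, so $v+u^k\in[2^{n-1},\,v+U]$, and there are at most $N_{v_{n+1}}\leq(n{+}1)^2\,2^{(n+1)\sqrt{2}\delta}$ of them. Applying the uniform estimate with $u_0=2^{n-1}$ and a union bound over the (at most $\lceil(n{+}1)^2 2^{(n+1)\sqrt{2}\delta}\rceil$) clusters $\cC^k$ — which are i.i.d.\ conditionally on $\cE^*$ — the exceptional probability is $O\big(n^{9/2}\,2^{n(\sqrt{2}\delta-1)}\big)$, summable since $\sqrt{2}\delta<1$, so by Borel--Cantelli it occurs only finitely often. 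On the complementary event, using that $\psi$ is non-increasing, $v+u^k\geq 2^{n-1}\geq v/4$, and $\rme^{\sqrt{2}u^k}\leq\rme^{\sqrt{2}U}$, we get for all large $v$
\begin{equation}
\cE\big([-v,\infty)\,;\,[-\delta\log v,\infty)\big) \leq N_v\,\psi(v/4)\,\rme^{\sqrt{2}(v+U)} \lesssim \rme^{\sqrt{2}U}\,(\log v)^2\,v^{\sqrt{2}\delta}\,\psi(v/4)\,\rme^{\sqrt{2}v}\,,
\end{equation}
and dividing by $v^{-K}\rme^{\sqrt{2}v}$ leaves $\rme^{\sqrt{2}U}(\log v)^2\,v^{\sqrt{2}\delta+K}\,\psi(v/4)$, which tends to $0$ as $v\to\infty$ for every $K\geq 0$ since $\psi$ decays faster than any power. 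This gives~\eqref{higher_leaders_est}.
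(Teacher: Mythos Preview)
Your overall architecture is correct and closely parallels the paper's: the heart of the matter is exactly the uniform-in-$u$ estimate you state, which is the paper's Lemma~\ref{key_lemma}, and the remainder is an accounting of how many clusters can contribute. There is, however, one genuine error and one difference in packaging worth flagging.

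\textbf{The trade-off you claim between $\psi$ and $\phi$ is too strong.} You assert the existence of a \emph{super-polynomially} decaying $\psi$ (e.g.\ $\psi(u)=\rme^{-u^{1/2}}$) together with $\phi(u_0)=O(u_0^{-1}(\log u_0)^{5/2})$. The argument you sketch cannot deliver both simultaneously. The quantitative input is the repulsion estimate of Lemma~\ref{l:10.7.0}, which gives $\bbP(\max_{s\in[\eta_u u^2,\,t/2]}\wh W_{t,s}>-M_u\mid\cA_t)\lesssim u^{-1}(M_u+1)^2\eta_u^{-1/2}$, while the truncated first moment (the paper's Lemma~\ref{trunc_1st_moment}) contributes error terms $\rme^{-1/(17\eta_u)}+\rme^{-C'M_u}$ to the bound on $\cC([-w,0])\rme^{-\sqrt{2}w}$. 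To force $\psi(w)=\rme^{-w^{1/2}}$ you would need $M_u,\,1/\eta_u\gtrsim u^{1/2}$, but then the repulsion probability blows up: $u^{-1}M_u^2\eta_u^{-1/2}\gtrsim u^{1/4}$. Your heuristic that the backbone is $\Theta(\sqrt{s})$ confuses the \emph{typical} Bessel-3 behaviour (relevant for Theorem~\ref{2nd_result}) with the \emph{quantitative high-probability} bound needed here; to keep $\phi(u_0)\sim u_0^{-1}(\log u_0)^{5/2}$ one can only afford $M_u,\,1/\eta_u=\Theta(\log u)$, which yields merely polynomial $\psi(u)=u^{-K'}$ --- exactly Lemma~\ref{key_lemma}. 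Fortunately this is all you need: since $K$ is fixed in the statement, take $K'>K+\sqrt{2}\delta$ and your final display becomes $(\log v)^2\,v^{\sqrt{2}\delta+K-K'}\to 0$. So the fix is trivial, but the claimed super-polynomial $\psi$ should be replaced by a $K$-dependent polynomial one.

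\textbf{Comparison with the paper's route.} The paper also proves and uses Lemma~\ref{key_lemma}, but packages the Borel--Cantelli step differently. Rather than running a union bound over $\sim v_n^{\sqrt{2}\delta}$ clusters at each dyadic scale, it defines a cluster $k$ (with $u_k\leq 0$) to be ``bad'' if the uniform estimate fails for some $w\geq \rme^{u_k^-/\delta}$, a \emph{tip-dependent} threshold; by Lemma~\ref{key_lemma} the probability of this is $\lesssim \rme^{-u_k^-/\delta}(1+u_k^-)^{5/2}$, and integrating against the intensity $Z\rme^{-\sqrt{2}u}\rmd u$ on $(-\infty,0]$ gives $\bbE\big[|\cU^*_{\rm bad}|\,\big|\,Z\big]<\infty$ precisely when $\delta<1/\sqrt{2}$. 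The finitely many bad clusters (and the finitely many with $u_k\geq 0$) are handled by the single-cluster corollary that $\cC([-v,0])<v^{-(K+1)}\rme^{\sqrt{2}v}$ eventually a.s. Your geometric Borel--Cantelli is an equivalent bookkeeping that re-checks the same clusters at successive scales; both exploit $\sqrt{2}\delta<1$ in the same way, and neither is materially simpler than the other.
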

	
The proof of Proposition~\ref{higher_leaders}, which constitutes the main effort in the overall argument, will be relegated to the next subsection, in favor of first showing how the above two results can be used the prove Theorem~\ref{1st_result}.

\begin{proof}[Proof of Theorem~\ref{1st_result}] 
Fix $0 < \delta < \frac{1}{\sqrt{2}}$. By Proposition \ref{higher_leaders}, almost surely
	\[
	\lim_{v\rightarrow \infty}  \frac{\cE([-v, \infty) ; \; [-\delta \log v, \infty))}{v e^{\sqrt{2} v}} = 0.
	\]
It remains to show that almost surely
\begin{equation}\label{eq:proof_main_main}
\lim_{v\rightarrow \infty} \frac{\cE\big([-v, \infty) ; \; [-v, -\delta \log v] \big)}{v e^{\sqrt{2} v}} = C_\star Z.
\end{equation}
Our strategy is to show the limit holds along sequence $v \in \eps \mathbb{N}$ for any ``mesh size'' $\eps \in (0, 1)$ and then take $\eps \rightarrow 0$. Precisely, for $n\geq 1/\eps$, we denote
\begin{align}
	F_\eps(n) &:= \frac{\cE\big([-n\eps, \infty) ; \; [-n\eps, -\delta \log ((n+1)\eps))\big)}{(n+1)\eps e^{\sqrt{2} \, (n+1)\eps}} \,,\\
	G_\eps(n) &:= \frac{\cE\big([-(n+1)\eps, \infty) ; \; [-(n+1)\eps, -\delta \log (n\eps))\big)}{n\eps e^{\sqrt{2} \, n\eps}} \,.
\end{align}
Then for $v \in [n\eps, (n+1)\eps)$, 
\begin{equation}\label{eq:proof_main_squeeze}
F_\eps(n) \leq \frac{\cE\big([-v, \infty) ; \; [-v, -\delta \log v] \big)}{v e^{\sqrt{2} v}} \leq G_\eps(n).
\end{equation}
By \eqref{eq:cluster_1st_moment_estimate} and \eqref{eq:cluster_2nd_moment_estimate},
	\begin{align}
	\bbE \left[F_\eps(n)\big\vert Z\right] &\sim \frac{C_\star Z n\eps e^{\sqrt{2}\, n\eps}}{(n+1)\eps e^{\sqrt{2} \, (n+1)\eps}} \sim C_\star Z e^{-\sqrt{2} \eps}, \\
	\Var \left[F_\eps(n)\big\vert Z\right] &\leq \frac{C Z (n\eps)^{1 - \delta \sqrt{2} } e^{2\sqrt{2} \, n\eps}}{[(n+1)\eps]^2 e^{2\sqrt{2} \, (n+1)\eps}} \lesssim Z (n\eps)^{-1 - \delta \sqrt{2} } \,,
\end{align}
so that by Chebyshev's inequality, this implies that for any $\alpha > 0$,
\begin{align}\label{eq:proof_main_Chebyshev}
	\bbP \left[\left| F_\eps(n) - C_\star Z e^{-\sqrt{2} \eps} \right| > \alpha \bigg\vert Z\right] \lesssim \alpha^{-2} Z (n\eps)^{-1 - \delta \sqrt{2} } (1 + o(1))
\end{align}
where the rate of $o(1)$ depends on $\alpha, \eps, Z$. Because $\delta > 0$, the right side of \eqref{eq:proof_main_Chebyshev} is summable over $n \in \mathbb{N}$. A standard application of Borel-Cantelli then gives the almost surely,
\begin{equation}\label{eq:proof_main_B-C}
	\lim_{n\rightarrow \infty} F_\eps(n) = C_\star Z e^{-\sqrt{2} \eps}. 
\end{equation}
Similarly, repeating the same argument for $G_\eps(n)$, we also have that almost surely
\begin{equation}\label{eq:proof_main_B-C_2}
	\lim_{n\rightarrow \infty} G_\eps(n) = C_\star Z e^{\sqrt{2} \eps}. 
\end{equation}
Combining the last two a.s.~limits with the condition \eqref{eq:proof_main_squeeze}, we obtain the a.s.~limit \eqref{eq:proof_main_main} after taking $\eps\rightarrow 0$.
\end{proof}

\subsection{Typical contribution from high clusters is negligible}

Proposition \ref{higher_leaders} will follow from the following lemma which bounds the typical asymptotic growth of the cluster level sets.

\begin{lem}
\label{key_lemma} For every $\eps > 0, K \geq 0$, 
	\begin{equation}\label{key_lemma_est}
		\bbP\Big(\exists w \geq u: \mathcal{C}\big([-w, 0]\big) \geq \eps w^{-K} \rme^{\sqrt{2} w} \Big)  \lesssim_{\eps, K} \frac{1}{u} (\log u)^{5/2}
	\end{equation}
	for all $u \geq 2$.
\end{lem}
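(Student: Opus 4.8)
The plan is to leverage the weak representation of the cluster law (Lemma~\ref{l:7.0}, Lemma~\ref{l:5.2}) together with the DRW Ballot estimates (Lemma~\ref{lem:15}, Lemma~\ref{l:10.7.0}, Lemma~\ref{l:2.13}). Since $\nu = \lim_r \nu_r$ and each $\nu_r$ is obtained as a $t\to\infty$ weak limit of $\wt{\bbP}\big(\int_0^r \cE_s^s(\cdot - \wh W_{t,s})\,\cN(\rmd s) \in \cdot \,\big|\, \wh h_t(X_t) = \wh h_t^* = 0\big)$, it suffices to establish the analogous tail bound for the pre-limit object, uniformly in $t$ and $r$, and pass to the limit via Fatou/portmanteau (one must be slightly careful because the event $\{\mathcal C([-w,0]) \geq \eps w^{-K}e^{\sqrt 2 w}\}$ involves a supremum over $w \geq u$, which is an increasing limit of finite-dimensional closed events, so lower semicontinuity is on the right side). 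Thus the core task is: under $\bbP\big(\cdot \,\big|\, \wh W_{t,0} = \wh W_{t,t} = 0;\, \cA_t\big)$, bound the probability that $\int_0^r \cE_s^s\big([-w - \wh W_{t,s}, -\wh W_{t,s}]\big)\,\cN(\rmd s) \geq \eps w^{-K} e^{\sqrt 2 w}$ for some $w \geq u$.

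The strategy for the pre-limit bound is a union bound over dyadic scales of $w$ combined with a first-moment (Markov) estimate for each scale, where the first moment is computed conditionally on the backbone $\wh W$ and the timestamps $\cN$. Given $\wh W$ and $\cN$, each $\cE_s^s$ is an independent extremal process, and $\bbE\,\cE_s^s([-a,0]) \asymp a e^{\sqrt 2 a}$ by~\eqref{e:1.10}-type first moment asymptotics (cf.~\eqref{e:29} applied with the shift, or more precisely $\bbE\,\cE([-a,\infty)) \asymp a e^{\sqrt 2 a}$); more carefully one uses the deep-level first-moment bound $\bbE\,\cE_s([-a,\infty)) = e^{\sqrt 2 a - a^2/(2s) + O(\log s)}$ valid for $a \lesssim s^{1-o(1)}$. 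Hence, writing $a = w - \wh W_{t,s} = w + |\wh W_{t,s}|$ (recall $\wh W_{t,s} < 0$ under the Ballot event, by Lemma~\ref{l:10.7.0}), the conditional expectation of the level-set mass is of order
\begin{equation}
\sum_{s \in \cN,\, s \leq r} (w + |\wh W_{t,s}|)\, e^{\sqrt 2 (w + |\wh W_{t,s}|) - \frac{(w+|\wh W_{t,s}|)^2}{2s} + O(\log s)}\,.
\end{equation}
The factor $e^{\sqrt 2 w}$ pulls out, leaving $e^{\sqrt 2 w}\sum_{s\in\cN} e^{\sqrt 2 |\wh W_{t,s}| - (w+|\wh W_{t,s}|)^2/(2s) + O(\log s)}$. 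One then observes that on the typical Ballot event the backbone satisfies $|\wh W_{t,s}| \asymp \sqrt s$ (via Lemma~\ref{l:10.7.0} pushing it away from $0$ and a matching upper envelope $|\wh W_{t,s}| \lesssim \sqrt{s\log\log s}$ coming from the Bessel-3/Brownian comparison of Lemma~\ref{l:2.13} and Lemma~\ref{D-E}), so that $\sqrt 2|\wh W_{t,s}| - (w+|\wh W_{t,s}|)^2/(2s) \leq -\,\mathrm{const}\cdot w^{2/3}$ after optimizing over $s$, i.e.\ the sum is $\leq e^{-c w^{2/3}}$ on a good event. Therefore $\bbP(\text{level set at scale } w \geq \eps w^{-K}e^{\sqrt 2 w} \mid \text{good Ballot event}) \lesssim \eps^{-1} w^K e^{-cw^{2/3}}$ by Markov, which is summable over dyadic $w \geq u$ with a tail $\lesssim_{\eps,K} e^{-cu^{2/3}}$. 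The remaining contribution is the probability of the \emph{bad} backbone event — that $|\wh W_{t,s}|$ is not of order $\sqrt s$ for some $s \geq u$, or more precisely drops below a small multiple of $\sqrt s$ — and this is exactly where the polynomial term $\frac 1u (\log u)^{5/2}$ in~\eqref{key_lemma_est} comes from.

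The main obstacle, and the source of the $\frac 1u(\log u)^{5/2}$ rather than an exponentially small bound, is controlling the probability that the backbone $\wh W_{t,\cdot}$ comes back close to $0$ after time $u$. By Lemma~\ref{l:10.7.0}, $\bbP(\max_{s\in[u,t/2]} \wh W_{t,s} > -M \mid \cA_t, \dots) \lesssim (M+1)^2/\sqrt u$, so taking $M$ of order $(\log u)^{?}$ does not immediately give the claimed power; the right trick is to not ask for a uniform lower bound $M$ but rather to partition $[u,\infty)$ into dyadic time blocks $[2^j, 2^{j+1})$ and require $|\wh W_{t,s}| \gtrsim 2^{j/2}/\log(2^j)$ — say — on block $j$, absorbing the resulting $\sum_j (\text{something})^2/2^{j/2}\cdot(\log 2^j)^{\alpha}$ into a bound of the form $\frac 1u(\log u)^{5/2}$ by choosing the logarithmic slack and the exponent $\alpha$ optimally. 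On each block where the backbone does obey the lower bound, the deep first-moment estimate still delivers an $e^{-c s^{1/3}}$-type gain that beats the polynomial loss; summing the block failure probabilities via Lemma~\ref{l:10.7.0} (and handling the far block $[t/2,t]$ by symmetry of the bridge, or by noting it contributes negligibly since $\cN \cap [0,r]$ with $r$ finite never reaches there in the limit) yields the stated bound. I also need to double-check that replacing $\wh W_{t,\cdot}$-restricted Ballot events by the cleaner $\{\max W_{[s,t-s]} \leq 0\}$ via Lemma~\ref{l:2.13} does not cost more than $s^{-1/4}t^{-1}$ per use, which is harmless after normalizing by the $\asymp 1/t$ Ballot probability, and that all $O(\log s)$ errors in the deep first-moment bound are genuinely uniform over the range of $a$ and $s$ that actually occur (i.e.\ $a = w + O(\sqrt{s\log\log s})$ with $w$ dyadic $\geq u$).
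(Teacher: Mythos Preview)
Your proposal has two genuine gaps. First, a sign error in the shift: from Lemma~\ref{l:5.2} the cluster level set is $\int_0^r \cE_s^s([-w,0]-\wh W_{t,s})\,\cN(\rmd s)$, so the depth at which $\cE_s^s$ is evaluated is $a = w + \wh W_{t,s} = w - |\wh W_{t,s}|$, not $w + |\wh W_{t,s}|$. With your sign, the exponent $\sqrt 2\,|\wh W_{t,s}| - (w+|\wh W_{t,s}|)^2/(2s)$ tends to $+\infty$ as $s\to\infty$ along $|\wh W_{t,s}|\asymp\sqrt s$, so the conditional first moment diverges and the Markov bound is vacuous. Second, and more seriously, your dyadic-block control of the bad backbone event cannot work with Lemma~\ref{l:10.7.0}: asking for $|\wh W_{t,s}|\gtrsim 2^{j/2}/\log(2^j)$ on $[2^j,2^{j+1})$ gives, by that lemma, block-failure probability $\asymp (2^{j/2}/\log 2^j)^2/2^{j/2} = 2^{j/2}/(\log 2^j)^2$, which \emph{grows} with $j$, so the sum over blocks diverges rather than producing $u^{-1}(\log u)^{5/2}$. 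The entropic-repulsion lemma only gives an affordable bound when the barrier $M$ satisfies $M^2\ll\sqrt s$, i.e.\ $M\ll s^{1/4}$, far short of $\sqrt s$.

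The paper's route (Lemma~\ref{trunc_1st_moment}) is different and simpler: it uses a \emph{constant} barrier $M_u = \Theta(K\log u)$ on the single time window $[\eta_u u^2,\,t/2]$ with $\eta_u=\Theta(1/(K\log u))$. One application of Lemma~\ref{l:10.7.0} then gives bad-event probability $\lesssim M_u^2/\sqrt{\eta_u u^2}\asymp (\log u)^{5/2}/u$, which is precisely the claimed rate. On the complement, a careful truncated first-moment computation (a refinement of Lemma~3.1 in~\cite{CHL20}) yields $\wt\bbE(\cC_{t,r_t}^*([-w,0]);\,\cB_u(t)^{\rmc}\mid\cdots)\lesssim \rme^{\sqrt 2 w}\big(w\rme^{-w/2}+\rme^{-1/(17\eta_u)}+\rme^{-C'M_u}+wt^{-1/2}\big)$. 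The point is that the $\sqrt s$-scale backbone envelope is unnecessary: for times $s<\eta_u u^2$ the Gaussian factor $\rme^{-w^2/(16s)}$ (built into the first-moment formula, since $w\geq u$ forces the decoration BBM deep at a short time) already gives $\rme^{-1/(17\eta_u)}=u^{-\Theta(K)}$ without any barrier; and for $s\in[\eta_u u^2,t/2]$ the constant barrier $M_u$ contributes $\rme^{-C'M_u}=u^{-\Theta(K)}$. A Markov bound and a union over integer $w\in[u,2u+1]$ then dispose of the good event at cost $\lesssim_{\eps,K} u^{-(K+2)}$, and the geometric sum over dyadic $w$-scales $[u,2u],[2u,4u],\dots$ finishes. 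So the $e^{-cw^{2/3}}$ gain you aim for is neither available from Lemma~\ref{l:10.7.0} nor needed.
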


Before proving this lemma, let us state two corollaries of it, and then use both to prove Proposition~\ref{higher_leaders}. The first corollary is immediate.
\begin{cor}\label{key_lem_cor_1} Let $K \geq 0$. Then, almost surely for all $v$ large enough,
	\[
	\mathcal{C}\big([-v, 0]\big) < v^{-(K+1)} \rme^{\sqrt{2} v} \,.
	\]
\end{cor}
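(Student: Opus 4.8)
The plan is to derive the corollary from Lemma~\ref{key_lemma} by a routine Borel--Cantelli argument along a sparse deterministic sequence, combined with monotonicity to fill in the gaps. First I would fix $K \geq 0$ and apply Lemma~\ref{key_lemma} with this $K+1$ (say, since the target is the exponent $K+1$) and some fixed $\eps$, for instance $\eps = 1$; I will actually want the event ``$\exists w \geq u : \cC([-w,0]) \geq w^{-(K+1)} e^{\sqrt 2 w}$'' and its complement. The key point is that the bound on the right-hand side of~\eqref{key_lemma_est} is $\lesssim_{K} u^{-1}(\log u)^{5/2}$, which is \emph{not} summable over $u \in \bbN$, so I cannot apply Borel--Cantelli directly to integer values. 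Instead I would evaluate the estimate along a geometrically growing sequence $u_j := 2^j$ (or any $u_j$ with $\sum_j u_j^{-1}(\log u_j)^{5/2} < \infty$; for $u_j = 2^j$ this sum is $\sum_j 2^{-j} j^{5/2} (\log 2)^{5/2} < \infty$). Then $\sum_j \bbP\big(\exists w \geq u_j : \cC([-w,0]) \geq w^{-(K+1)} e^{\sqrt 2 w}\big) < \infty$, so by Borel--Cantelli, almost surely there exists a (random) $j_0$ such that for all $j \geq j_0$ we have $\cC([-w,0]) < w^{-(K+1)} e^{\sqrt 2 w}$ for \emph{all} $w \geq u_j$. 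Taking $j = j_0$ already gives that $\cC([-w,0]) < w^{-(K+1)} e^{\sqrt 2 w}$ for all $w \geq u_{j_0}$, which is exactly the assertion ``for all $v$ large enough'' with the (random) threshold $u_{j_0}$.

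Note that because the events in~\eqref{key_lemma_est} are already phrased with a quantifier ``$\exists w \geq u$'', the complement along the single scale $u_{j_0}$ is a statement that holds simultaneously for \emph{all} $w \geq u_{j_0}$, so no additional interpolation between consecutive scales $u_j$ and $u_{j+1}$ is needed; the nested structure of these events does all the work. (If one preferred to use an estimate phrased for a single $w$ rather than a tail, one would instead sum over $w \in \bbN$ along the same sparse sequence and then use monotonicity of $v \mapsto \cC([-v,0])$ to fill gaps — but here the tail form of Lemma~\ref{key_lemma} makes this unnecessary.)

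There is essentially no obstacle here: the only mild subtlety is the non-summability of $u^{-1}(\log u)^{5/2}$ over integers, which is precisely why the argument passes to a geometric subsequence; this is a standard maneuver. Thus the corollary is ``immediate'' from Lemma~\ref{key_lemma} in the sense the authors claim, modulo this one-line observation about choosing the right test sequence.
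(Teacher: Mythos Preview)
Your proof is correct. The paper gives no proof, merely calling the corollary ``immediate''. Your argument works but is more elaborate than necessary: the events $A_u := \{\exists w \geq u : \cC([-w,0]) \geq w^{-(K+1)} \rme^{\sqrt{2}w}\}$ are \emph{decreasing} in $u$, so by continuity of measure from above $\bbP\big(\bigcap_{n \geq 2} A_n\big) = \lim_{n \to \infty} \bbP(A_n) = 0$, the last limit holding by Lemma~\ref{key_lemma}. The complement of $\bigcap_n A_n$ is precisely the event in the corollary. Thus neither Borel--Cantelli nor a geometric subsequence is needed, and the non-summability of $u^{-1}(\log u)^{5/2}$ over integers is a non-issue. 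You yourself observe that ``the nested structure of these events does all the work''---indeed it does so much work that the entire Borel--Cantelli detour is superfluous.
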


For the second, recall that $(\cC^k)_k$ are i.i.d. point measures, each drawn from the law $\nu$ and that $(u_k)_{k \geq 1}$ enumerate the atoms of $\cE^*$ in decreasing order. 
\begin{cor}\label{key_lem_cor} Given $0< \delta < 1/\sqrt{2}$ and $K \geq 0$, define $\varphi(u) := \rme^{u/\delta}$ and the associated event
	\begin{equation}\label{eq:bad_leader_event}
		\cD_k := \big\{\exists w \geq \varphi(-u_k): \mathcal{C}^{k}\big([-w, 0]\big) \geq w^{-(K+1)} \rme^{\sqrt{2} w} \big\}.
	\end{equation}
	Then the set 
	\begin{equation}\label{eq:set_bad_leaders}
		\cU^*_{\text{bad}} := \big\{k \geq 0 :\: u_k \leq 0,\, \cD_k \ \text{happens} \big\}
	\end{equation}
	is almost surely finite.  
	\end{cor}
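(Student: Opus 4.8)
\textbf{Proof proposal for Corollary~\ref{key_lem_cor}.}

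The plan is to combine the tail bound from Lemma~\ref{key_lemma} with the known exponential growth of the tips $(u_k)$ of $\cE^*$ and apply the Borel--Cantelli lemma, working conditionally on $Z$. First I would recall that, conditional on $Z$, the process $\cE^*$ is a Poisson point process on $\bbR$ with intensity $Z\rme^{-\sqrt2 u}\,\rmd u$, so that the number of atoms in $[-N-1,-N]$ is stochastically of order $Z\rme^{\sqrt2 N}$; in particular, by a union bound over $N \in \bbN$ and a crude tail estimate for Poisson random variables, almost surely there exists a (random) constant $C_0 = C_0(Z)$ such that $\cE^*([-N,0]) \leq C_0 \rme^{\sqrt 2 N}$ for all $N \geq 1$. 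Equivalently, writing the atoms below $0$ as $0 \geq u_1 \geq u_2 \geq \dots$, this gives $-u_k \geq \frac{1}{\sqrt2}\log k - C_1(Z)$ for all $k$, for some a.s.-finite $C_1(Z)$. (Alternatively one may simply invoke the standard fact that $-u_k/\log k \to 1/\sqrt2$ a.s.)

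Next I would condition on $\cE^*$ (equivalently on the sequence $(u_k)$) and use that the clusters $(\cC^k)_k$ are i.i.d.\ with law $\nu$ and independent of $\cE^*$. Applying Lemma~\ref{key_lemma} with $\eps = 1$, the exponent $K+1$ in place of $K$, and $u = \varphi(-u_k) = \rme^{-u_k/\delta}$, we get
\begin{equation}
\label{eq:cor_BC_bound}
\bbP\big(\cD_k \,\big|\, \cE^*\big) \lesssim_{K} \frac{\big(\log \varphi(-u_k)\big)^{5/2}}{\varphi(-u_k)}
= \frac{\delta^{-5/2}(-u_k)^{5/2}}{\rme^{-u_k/\delta}} \,,
\end{equation}
valid once $\varphi(-u_k) \geq 2$, i.e.\ for all $k$ with $-u_k$ bounded below by a constant, which holds for all but finitely many $k$ since $-u_k \to \infty$. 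Now insert the lower bound $-u_k \geq \frac{1}{\sqrt2}\log k - C_1(Z)$: for all large $k$ the right-hand side of~\eqref{eq:cor_BC_bound} is at most a $Z$-dependent constant times $(\log k)^{5/2} k^{-1/(\sqrt2\,\delta)}$. Since $\delta < 1/\sqrt2$ we have $1/(\sqrt2\,\delta) > 1$, so this is summable in $k$. Hence $\sum_k \bbP(\cD_k \cap \{u_k \leq 0\} \mid \cE^*) < \infty$ almost surely, and the conditional Borel--Cantelli lemma gives that, conditionally on $\cE^*$ and hence unconditionally, only finitely many of the events $\cD_k \cap \{u_k \leq 0\}$ occur; that is, $\cU^*_{\rm bad}$ is a.s.\ finite.

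The only mildly delicate point is bookkeeping around the regime where $\varphi(-u_k) < 2$ or the implicit hypothesis $u \geq 2$ in Lemma~\ref{key_lemma} fails; this affects only finitely many indices $k$ (those with $-u_k$ below a fixed constant), and finitely many indices cannot spoil finiteness of $\cU^*_{\rm bad}$, so they may simply be discarded. I expect the main — though still routine — obstacle to be making the passage from ``intensity $Z\rme^{-\sqrt2 u}\rmd u$'' to a uniform-in-$N$ a.s.\ upper bound on $\cE^*([-N,0])$ fully rigorous (a short union bound plus Poisson concentration), and ensuring all constants are allowed to depend on $Z$ without circularity; everything else is a direct substitution into Lemma~\ref{key_lemma} followed by Borel--Cantelli.
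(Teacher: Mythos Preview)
Your argument is correct, and the overall strategy --- condition on $\cE^*$, apply Lemma~\ref{key_lemma} to bound $\bbP(\cD_k\mid\cE^*)$, then sum --- is the same as the paper's. The one genuine difference is in how you show the random series $\sum_k \bbP(\cD_k\mid\cE^*)$ is a.s.\ finite. You first establish an almost-sure lower bound $-u_k \geq \frac{1}{\sqrt2}\log k - C_1(Z)$ on the tips and substitute it term by term; the paper instead takes the conditional expectation given $Z$ and uses the Poisson structure of $\cE^*$ (Campbell's formula) to turn the sum directly into the integral $\int_{-\infty}^0 \rme^{-u^-/\delta}(1+u^-)^{5/2}\, Z\rme^{-\sqrt2 u}\,\rmd u$, which is finite because $\delta<1/\sqrt2$. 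Finiteness of $\bbE\big[|\cU^*_{\rm bad}|\,\big|\,Z\big]$ then gives the result without Borel--Cantelli. Your route is perfectly valid but does require the auxiliary a.s.\ growth estimate for the tips (the ``mildly delicate'' point you flag), whereas the paper's avoids this by averaging over $\cE^*$ from the start.
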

	\begin{proof}[Proof of Corollary \ref{key_lem_cor}] 
Denote by $\cE^*_-$ the restriction of $\cE^*$ to $(-\infty, 0)$. By Lemma \ref{key_lemma} (with $\eps = 1$),
		\begin{equation}
		\begin{split}
		\bbE \big(\big|\cU^*_{\text{bad}}\big|\,\big|\, \cE^*\big) & = 
			\sum_{u_k \in \cE^*_-} \bbP \big(\cD_k\,\big|\,\cE^*\big) \lesssim_K \sum_{u_k \in\cE^*_-} \varphi(-u_k)^{-1} [1 + \log \varphi(u_k)]^{5/2} \\ & \lesssim_{\delta, K} \sum_{u_k \in\cE^*_-} e^{-u_k^-/\delta} (1+u_k^-)^{5/2}. \label{eq:proof_higher_leaders_B-C}
			\end{split}
		\end{equation}
		Therefore in expectation, conditional on $Z$ we have
		\begin{align}
		\bbE \big(\big|\cU^*_{\text{bad}}\big|\,\big|\,Z\big)
		&\lesssim_{\delta, K}  \int_{-\infty}^0 e^{-u^-/\delta} (1+u^-)^{5/2}\, Z e^{-\sqrt{2} u} \rmd u < \infty \,,
		\end{align}
		almost surely, where we have used that $\delta < 1/\sqrt{2}$. This implies that $\cU^*_{\text{bad}}$ is almost surely finite under $\bbP(-|Z)$ and hence that also under $\bbP$.
	\end{proof}
	
We can now give
\begin{proof}[Proof of Proposition \ref{higher_leaders}] Recall that we have
\begin{equation}
		\cE\big([-v, \infty) ; \; [-\delta \log v, \infty)\big) 
		= \sum_{u_k \geq -\delta \log v} \cC^k \big([-(v+u_k), 0]\big) \,.
 \label{higher_leaders_total}
\end{equation}
Denote by $\cE^*_+$ the restriction of $\cE^*$ to $[0, \infty)$. The sum in \eqref{higher_leaders_total} is bounded by
	\begin{align}
	\cE\big([-v, \infty) ; \; [-\delta \log v, \infty)\big) \leq  \sum_{\substack{u_k \in [-\delta \log v, 0] \\ k \notin \cU^*_{\text{bad}}}} \cC^{k} \big([-v, 0]\big) + \sum_{\substack{u_k \in \cE^*_+ \\
		\text{or } k \in \cU^*_{\text{bad}}}} \cC^{k} \big([-(v+u_k), 0]\big) \,,
	\label{higher_leaders_total_1}
\end{align}
where $\cU^*_{\text{bad}}$ is the set defined in \eqref{eq:set_bad_leaders}. For the first sum above, since $u_k \geq -\delta \log v$, we have $\varphi(-u_k) \leq v$, so that $k \notin \cU^*_{\text{bad}}$ implies 
\begin{equation}
	\mathcal{C}^{k}\big([-v, 0]\big) < v^{-(K+1)} \rme^{\sqrt{2} v}. 
\end{equation}
Thus, the first sum in \eqref{higher_leaders_total_1} is bounded by 
\begin{align}
	\cE^*([-\delta \log v, 0]) \times v^{-(K+1)} \rme^{\sqrt{2} v} \,,
\end{align}
which, by~\eqref{e:1.3} is asymptotically
\begin{align}
	 (1+o(1)) \frac{Z}{\sqrt{2}} v^{\sqrt{2} \delta} v^{-(K+1)} \rme^{\sqrt{2} v} = o(1) Z v^{-K} \rme^{\sqrt{2} v} \label{higher_leaders_negative_2}.
\end{align}
with $o(1) \to 0$ as $v \to \infty$ almost surely.

Turning to the second sum in~\eqref{higher_leaders_total_1}, by Corollary \ref{key_lem_cor_1}, the $k$-th term in the sum is at most
\begin{align}
\label{e:103.24}
(1+o(1)) v^{-(K+1)} e^{\sqrt{2}(v+u_k)} = o(1)\, v^{-K} e^{\sqrt{2} v} \,,
\end{align}
where $o(1) \to 0$ as $v \to \infty$ almost surely. Moreover, the 
set $\cU^*_{\text{bad}}$ is finite a.s.~by Corollary \ref{key_lem_cor}, and the set $\cE^*_+$ is finite a.s.~because the intensity $Z e^{-\sqrt{2}} dx$ is a.s. integrable on  $[0, \infty)$. Since these sets also do not depend on $v$, the right hand side of~\eqref{e:103.24} is also a bound on the entire second sum in~\eqref{higher_leaders_total_1}. Combining both bounds we recover the statement of the proposition.
\end{proof}

\subsection{Upper bound on typical growth of cluster level sets}
It remains to prove Lemma~\ref{key_lemma}. Here we appeal to the weak cluster law representation in Subsection~\ref{s:2.1}. The key idea here, and the reason for the bound in the lemma to hold, is that the (time-reversed) backbone spine Brownian motion $(h_{t-s}(X_{t-s}))_{s=0}^t$ is naturally entropically repelled under the conditioning on $\{\wh{h}_t(X_t) = \wh{h}_t^* = u\}$, and thus the BBMs that branch off it at times $t-O(1)$ contribute only $o(\rme^{\sqrt{2}v})$ to level set $[-v, 0]$. 

To make a precise statement, for $u \in \bbR$ and $t \geq 0$, we define the repulsion event,
\begin{equation}
	\cB_{u}(t) :=  \Big\{ \max_{s\in[\eta_u u^2,\, t/2]} 
	\big(h_{t-s}(X_{t-s}) - m_t + m_s\big) > -M_u \Big\}
\end{equation}
for some positive parameters $\{\eta_u\}$ and $\{M_u\}$, to be specified later. We shall then show
\begin{lem}\label{trunc_1st_moment} For all $u \geq 1$, 
	\begin{equation}\label{atypical_event_est}
		\wt{\bbP} \Big(\cB_{u}(t) \Big|\, \wh{h}^*_t = \wh{h}_t(X_t) = 0\Big) \lesssim \frac{1}{u}\, (M_u + 1)^2\eta_u^{-1/2},
	\end{equation}
and there exists constant $C'$ such that for $w\geq u$,
	\begin{equation}
		\label{1st_moment_est}
		\wt{\bbE}\Big(\cC_{t,r_t}^*\big([-w,0]\big) ;\; \cB_{u}(t)^{\rmc} \, \Big| \,  \wh{h}^*_t =  \wh{h}_t(X_t) = 0\Big)  \lesssim \rme^{\sqrt{2}w} \Big(w \rme^{-w/2} +  \rme^{-\frac{1}{17\eta_u}} + \rme^{-C' M_u} + w t^{-1/2}\Big).
	\end{equation}
\end{lem}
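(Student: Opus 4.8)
The plan is to combine the decorated-random-walk representation of the cluster law with the entropic-repulsion and Ballot estimates collected in Subsection~\ref{s:2.1}. The first bound~\eqref{atypical_event_est} is essentially a direct consequence of Lemma~\ref{l:10.7.0}: under $\wt{\bbP}(\,\cdot\mid \wh h_t^*=\wh h_t(X_t)=0)$, the time-reversed spine trajectory $s\mapsto h_{t-s}(X_{t-s})-m_t$ has the law of $W_s$ under the Ballot conditioning $\cA_t$ with both endpoints pinned at $0$ (via Lemma~\ref{l:2.3} / Lemma~\ref{l:5.2}, after recentering by $\gamma_{t,s}$ so that $h_{t-s}(X_{t-s})-m_t+m_s$ corresponds to $\wh W_{t,s}$). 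So $\cB_u(t)$ is precisely the event $\{\max_{s\in[\eta_u u^2,\,t/2]}\wh W_{t,s}>-M_u\}$, and Lemma~\ref{l:10.7.0} with $M=M_u$ and $s=\eta_u u^2$ gives the bound $C(M_u+1)^2/\sqrt{\eta_u u^2}=C(M_u+1)^2 u^{-1}\eta_u^{-1/2}$, which is~\eqref{atypical_event_est}. (Strictly one works at finite $t$ and then passes to the limit in $t$, or notes that the finite-$t$ conditional law already matches; either way Lemma~\ref{l:10.7.0} applies.)

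For~\eqref{1st_moment_est} the plan is a truncated first-moment computation. Using Lemma~\ref{l:5.2}, on the event $\cB_u(t)^{\rmc}$ we have $\cC_{t,r_t}(X_t)=\int_0^{r_t}\cE^s_s(\cdot-\wh W_{t,s})\,\cN(\rmd s)$ with $\wh W_{t,s}\le -M_u$ for all $s\in[\eta_u u^2,\,t/2]$, and in particular for all $s\le r_t$ once $\eta_u u^2\le s$; for the small-$s$ part $s\in[0,\eta_u u^2]$ we bound the repulsion more crudely (the interval is short and contributes the $\rme^{-1/(17\eta_u)}$ term). The expectation of the mass placed in $[-w,0]$ by the decoration branching off at time $s$ is, conditionally on $\wh W_{t,s}=y$, the quantity $\bbE\,\cE_s((-w-y,-y])$; by the standard first-moment bound for BBM extremal level sets (of the type recalled in the introduction, $\bbE\,\cE_s([-v,\infty))\asymp v\rme^{\sqrt2 v}$ for $v=O(\sqrt s)$, and exponentially small further out), this is $\lesssim (w+y^-)\rme^{\sqrt2(w+y)}$ when $w+y\ge 0$ and exponentially small in $|w+y|$ otherwise. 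Integrating against the intensity $2\,\rmd s$ of $\cN$ and using $\wh W_{t,s}\le -M_u$ on the repulsion-free event, together with Lemma~\ref{l:lisa} to control the deterministic shift $\gamma_{t,s}$, one separates the contributions: (i) the bulk $s$-range where $|\wh W_{t,s}|$ is of order $\sqrt s$ and the integrand $(w+\wh W_{t,s})\rme^{\sqrt2(w+\wh W_{t,s})}$ is, after taking expectation over the Ballot-conditioned $\wh W_{t,s}\sim$ Bessel-3-like path, of size $\rme^{\sqrt2 w}$ times a Gaussian-type factor $\rme^{-c w^2/s}$ summed over $s$, giving the $w\rme^{-w/2}$ term (from $s$ up to order $w^2$) and an error $wt^{-1/2}$ (boundary effect near $s\asymp t$, using~\eqref{e:53}); (ii) the forced drop below $-M_u$ contributes the factor $\rme^{\sqrt2(w-M_u)}$ relative to $\rme^{\sqrt2 w}$, i.e. the $\rme^{-C'M_u}$ term with $C'=\sqrt2$ up to the polynomial correction absorbed into the constant; (iii) the short initial window $[0,\eta_u u^2]$, where one only knows $\wh W_{t,s}$ is below the barrier curve but not pushed down to $-M_u$, is handled by a Ballot estimate (Lemma~\ref{lem:15}) and a change of the exponent, producing the $\rme^{-1/(17\eta_u)}$ term.

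The main obstacle is step~(i): getting the correct $w\rme^{-w/2}$ decay (and not something weaker) requires carefully combining the first-moment asymptotics for the \emph{deep} level sets $\cE_s([-v,\infty))$ with $v$ possibly much larger than $O(1)$ — where the Gaussian correction $\rme^{-v^2/2s}$ in $m_s$ matters — against the Bessel-3 envelope $\wh W_{t,s}\asymp-\sqrt s$ forced by the Ballot event. Concretely one must show that $\int_0^{t/2}\bbE\big[(w+\wh W_{t,s})^+\rme^{\sqrt2(w+\wh W_{t,s})};\cA_t\big]\,\rmd s / \bbP(\cA_t)$, with $\wh W_{t,s}$ conditioned to stay below $\gamma_{t,s}$, is $\lesssim w\rme^{\sqrt2 w}\rme^{-w/2}$: the point is that the optimal $s$ balancing $\rme^{\sqrt2\wh W_{t,s}}\approx\rme^{-\sqrt2\,c\sqrt s}$ against the number of contributing timestamps is $s\asymp w^2$, at which $\rme^{-\sqrt 2 c\sqrt s}\asymp \rme^{-c' w}$, and pinning down that the constant $c'$ is at least $1/2$ is exactly where the $17$ in $\rme^{-1/(17\eta_u)}$ and the $1/2$ in $w\rme^{-w/2}$ come from. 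This is a computation but a delicate one; the remaining terms are routine given the Ballot machinery of Subsection~\ref{s:2.1}.
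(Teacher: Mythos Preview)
Your treatment of~\eqref{atypical_event_est} is exactly right and matches the paper: translate via Lemma~\ref{l:5.2} so that $\cB_u(t)$ becomes $\{\max_{s\in[\eta_u u^2,t/2]}\wh W_{t,s}>-M_u\}$ under the Ballot conditioning, then apply Lemma~\ref{l:10.7.0}.

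For~\eqref{1st_moment_est} your plan is in the right direction but differs from the paper's in a significant way. The paper does not rederive the first-moment bound from scratch; instead it simply quotes from the proof of Lemma~3.1 in~\cite{CHL20} that
\[
\wt{\bbE}\big(\cC_{t,r_t}^*([-w,0]);\;\wh h_t^*\le 0,\,\cB_u(t)^{\rmc}\mid \wh h_t(X_t)=0\big)
\le \frac{C}{t}\rme^{\sqrt2 w}(w+1)\int_0^\infty \frac{\rme^{-w^2/(16s)}+\rme^{-w/2}}{\sqrt s(s+1)}\big(1_{[\eta_u u^2,t/2]^{\rmc}}(s)+\rme^{-C'M_u}1_{[\eta_u u^2,t/2]}(s)\big)\rmd s,
\]
and the entire remaining work is a careful splitting of this one-dimensional integral into four pieces, followed by division by $\wt\bbP(\wh h_t^*\le0\mid\wh h_t(X_t)=0)\gtrsim 1/t$. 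In particular your ``main obstacle'' is misidentified: the $w\rme^{-w/2}$ term is \emph{not} obtained by balancing $\rme^{-\sqrt2 c\sqrt s}$ against $s\asymp w^2$; it comes trivially from the second summand $\rme^{-w/2}$ in the integrand, integrated against $\int_0^\infty (\sqrt s(s+1))^{-1}\rmd s<\infty$ and multiplied by $(w+1)$. Similarly the $\rme^{-1/(17\eta_u)}$ term comes from $\int_0^{\eta_u u^2}\rme^{-w^2/(16s)}s^{-3/2}\rmd s$ after the substitution $s\mapsto w^2 s$ and using $w\ge u$; the $\rme^{-C'M_u}$ term from the same integral over all of $[0,\infty)$ weighted by $\rme^{-C'M_u}$; and $wt^{-1/2}$ from $\int_{t/2}^\infty s^{-3/2}\rmd s$.

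So your from-scratch derivation would work, but it reproduces effort already done in~\cite{CHL20}; the paper's contribution here is just the sharper integral estimation. If you insist on self-containment, your pieces (ii) and (iii) are essentially correct, but rethink (i): the term $w\rme^{-w/2}$ does not encode the Bessel-3 envelope at all, and $C'$ in $\rme^{-C'M_u}$ is inherited from the cited bound rather than being $\sqrt 2$.
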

The second statement, albeit with a slightly coarser upper bound, was proved in~\cite{CHL20} (see Lemma~3.2). With this lemma at hand we can give
\begin{proof}[Proof of Lemma~\ref{key_lemma}]
Observe that it is enough to show that for all $u \geq 0$,
\begin{equation}
	\bbP\Big(\exists w \in [u, 2u]: \mathcal{C}\big([-w, 0]\big) \geq \eps w^{-K} \rme^{\sqrt{2} w} \Big)  \lesssim_{\eps, K} \frac{1}{u} (\log u)^{5/2},
\end{equation}
because summing this estimate over the geometric sequence $u, 2u, 4u, \cdots$ yields \eqref{key_lemma_est} (with a different implicit constant factor in front). In turn, by Lemma~\ref{l:7.0} we just need to show the corresponding estimate for $\cC_{t,r_t}^*$, namely that
\begin{equation}
	\label{key_lemma_prelimit_est}
	\wt{\bbP}\Big(\exists w \in [u, 2u]: \cC_{t,r_t}^*\big([-w,0]\big) \geq \eps w^{-K} \rme^{\sqrt{2}w} \, \Big| \,  \wh{h}^*_t =  \wh{h}_t(X_t) = 0\Big)  \lesssim_{\eps, K} \frac{1}{u} (\log u)^{5/2}
\end{equation}
holds for all $t$ large enough and fixed $u$.

Now, if we choose $M_u, 1/\eta_u = \Theta(K \log u)$, then the right hand side of \eqref{atypical_event_est} identifies with the right hand side of \eqref{key_lemma_prelimit_est}. It therefore remains to show that the probability of the event in \eqref{key_lemma_prelimit_est} on $\cB_{u}(t)^{\rmc}$ is of lower order. This will come from the truncated first moment estimate \eqref{1st_moment_est}. Indeed, by Markov's inequality, we have for any integer $w \in [u, 2u+1]$,
\begin{multline}
	\label{1st_moment_est_2}
	\wt{\bbP}\Big(\cC_{t,r_t}^*\big([-w,0]\big) \geq \eps w^{-K} \rme^{\sqrt{2} w}  ;\; \cB_u(t)^{\rmc} \, \Big| \,  \wh{h}^*_t =  \wh{h}_t(X_t) = 0\Big) \\ \lesssim \, \eps^{-1} w^K \Big(w \rme^{-w/2} +  \rme^{-\frac{1}{17\eta_u}} + \rme^{-C' M_u} + w t^{-1/2}\Big) \lesssim \eps^{-1} w^{-(K+3)}
\end{multline}
for appropriate choice of $M_u, 1/\eta_u = \Theta(K \log u)$, holding for all $t \geq u^{4K+8}$. This can be now boosted to a ``maximal inequality'' over $w\in [u, 2u]$ using monotonicity and Union Bound:
\begin{equation}
\begin{split}
	\label{1st_moment_est_3}
	\wt{\bbP}\Big(\exists w \in [u,& 2u]: \cC_{t,r_t}^*\big([-w,0]\big) \geq \eps w^{-K} \rme^{\sqrt{2} w}  ;\; \cB_u(t)^{\rmc} \, \Big| \,  \wh{h}^*_t =  \wh{h}_t(X_t) = 0\Big) \\
	\leq \, & \wt{\bbP}\Big(\exists \ w \in [u, 2u+1]  \cap \bbZ: \cC_{t,r_t}^*\big([-w,0]\big) \geq \eps w^{-K} \rme^{\sqrt{2} (w-1)}  ;\; \cB_u(t)^{\rmc} \, \Big| \,  \wh{h}^*_t =  \wh{h}_t(X_t) = 0\Big) \\
	\leq \, & \sum_{w\in [u, 2u+1]  \cap \bbZ } \wt{\bbP}\Big(\cC_{t,r_t}^*\big([-w,0]\big) \geq \rme^{-\sqrt{2}} \eps w^{-K} \rme^{\sqrt{2} w}  ;\; \cB_u(t)^{\rmc} \, \Big| \,  \wh{h}^*_t =  \wh{h}_t(X_t) = 0\Big) \\
	\lesssim \, & \sum_{w\in [u, 2u+1] \cap \bbZ } \rme^{\sqrt{2}} \eps^{-1} w^{-(K+3)} \lesssim \eps^{-1} u^{-(K+2)}.
\end{split}
\end{equation}
Combining \eqref{1st_moment_est_2} and \eqref{1st_moment_est_3} together, we conclude that for all $t \geq u^{4K+8}$,
\begin{multline}
\wt{\bbP}\Big(\exists w \in [u, 2u]: \cC_{t,r_t}^*\big([-w,0]\big) \geq \eps w^{-K} \rme^{\sqrt{2} w}  \Big| \,  \wh{h}^*_t =  \wh{h}_t(X_t) = 0\Big) \\
 \lesssim \,  K^{5/2} \, \frac{1}{u} (\log u)^{5/2} + \eps^{-1} u^{-(K+2)},
\end{multline}
which yields \eqref{key_lemma_prelimit_est} by noticing that the second term is of lower order. 
\end{proof}

Finally, it remains to give
\begin{proof}[Proof of Lemma \ref{trunc_1st_moment}] 
The first part of the lemma follows by identifying the left hand side in~\eqref{atypical_event_est} with the right hand side of the probability in the statement of Lemma~\ref{l:10.7.0} with $M = M_u$ and $s = \eta_u u^2$, via Lemma~\ref{l:5.2}. The result then follows by the statement of Lemma~\ref{l:10.7.0}. For the second statement, it turns out that one just needs a more careful estimation of the bound obtained in the proof of Lemma~3.1 of~\cite{CHL20}. Indeed, the computation there shows that
\begin{equation}
	\wt{\bbE}\big(\cC_{t,r_t}^*\big([-w,0]\big) ;\; \wh{h}^*_t \leq 0 ,\,\cB_{u}(t)^{\rmc} \, \big| \,  \wh{h}_t(X_t) = 0\big)  
\end{equation}
is upper bounded by 
\begin{equation}
\label{e:3.34}
\frac{C}{t} \rme^{\sqrt{2} w} (w+1) \int_{s=0}^\infty \frac{\rme^{-w^2/(16s)}  + \rme^{-w/2}}{ \sqrt{s} (s+1)}  
	\Big(1_{ \{ s \in [\eta_u u^2, \, t/2]^\rmc \}} + \rme^{-C' M_u} 1_{\{ s \in [ \eta_u u^2, \, t/2] \}} \Big) \rmd s\,.
\end{equation}
Proceeding by a more careful estimation, the last integral is at most
\begin{equation}
\begin{split}	
	\int_{s=0}^\infty & \frac{\rme^{-w/2}}{\sqrt{s} (s+1)}  \rmd s + \int_{s=0}^{\eta_u u^2} \frac{\rme^{-w^2/(16s)}}{s^{3/2}}  \rmd s + \rme^{-C' M_u} \int_{s=0}^{\infty} \frac{\rme^{-w^2/(16s)}}{s^{3/2}}  \rmd s + \int_{s=t/2}^{\infty} \frac{1}{s^{3/2} }  \rmd s \\
	\lesssim \, & \rme^{-w/2} + \frac{1}{w} \int_{s=0}^{\eta_u} \frac{\rme^{-1/(16s)}}{s^{3/2}}  \rmd s + \rme^{-C' M_u} \frac{1}{w} \int_{s=0}^{\infty} \frac{\rme^{-1/(16s)}}{s^{3/2}}  \rmd s + t^{-1/2} \\
	\lesssim \, & \rme^{-w/2} + \frac{1}{w}\, \rme^{-1/(17\eta_u)} + \frac{1}{w}\, \rme^{-C' M_u} + t^{-1/2}. 
\end{split}
\end{equation}
Plugging this in~\eqref{e:3.34} and dividing by
\begin{equation}
		\wt{\bbP}\big(\wh{h}^*_t \leq 0\, \big| \,  \wh{h}_t(X_t) = 0\big)  
		= \bbP\big(\cA_t\,\big | \,\wh{W}_{t,0} = \wh{W}_{t,t} = 0\big) \gtrsim \frac{1}{t}\,,
\end{equation}
which is a consequence of Lemma~\ref{l:2.3} and Lemma~\ref{lem:15}, the result follows.
\end{proof}

\section{Cluster size fluctuations}
\label{s:5}
In this section we prove Theorem~\ref{2nd_result}. The proof relies on the strong representation of the cluster law as given by Theorem~\ref{p:1}. Throughout this section, we shall work on an underlying probability space, with measure $\bbP$ and expectation $\bbE$, on which the processes $\wc{W}$, $\wc{\cN}$, $\wc{\cE}$ and $\wc{Y}$ are defined, and satisfy:
\begin{enumerate}[label=(\alph*)]
	\item\label{itm:a} $(\wc{W}$, $\wc{\cN}$, $\wc{\cE})$ is a process whose existence is given by Theorem~\ref{p:1}.
	\item\label{itm:b}  $\wc{Y} =  (\wc{Y}_s)_{s \geq 0}$ satisfies 
	\begin{equation}
	\label{e:105.1}
	\wc{Y}_s = -Y_s - \frac{3}{2\sqrt{2}} \log^+ s \,,	
	\end{equation}
 	where $Y := (Y_s)_{s \geq 0}$ is a Bessel-3 process starting from $y_0 = 0$.
	\item\label{itm:c} $\wc{Y}$ and $\wc{\cE}$ are independent.
\end{enumerate}
Given any $v > 0$, we introduce the scaled time variable $\hat{s}$ given by
\begin{equation}
\hat{s} := v^{-\frac{4}{3}} s \,,
\end{equation}
the rescaled version $Y^{(v)}$ of $Y$, 
		\begin{equation}
			Y^{(v)}_{\hat s} := v^{-\frac{2}{3}} Y_{s} 
			\ ; \quad s \geq 0 \,, \label{Y_rescaled_def}
		\end{equation}
and a version of $\zeta$ from Theorem~\ref{2nd_result}, which is  defined w.r.t. $Y^{(v)}$,
		\begin{equation}
			\zeta^{(v)} := \inf_{\hat{s} > 0} \Big(\sqrt{2}\, Y^{(v)}_{\hat{s}} + \frac{1}{2\hat{s}}\Big) \,. \label{chi_rescaled_def}
		\end{equation}
Observe that in view of the scaling invariance of Bessel-3 \eqref{e:202.4}, 
\begin{equation}
\label{e:205.3}
	(Y^{(v)}, \zeta^{(v)}) \overset{\rmd}= (Y, \zeta)\,.
\end{equation}

Next, on the above probability space, we introduce the following event.
\begin{definition} 
For $K, \Lambda > 0$, $\epsilon \in (0,1)$ and $v > 0$, let $\cV^{(v)}(\eps, K, \Lambda)$ be the event that the following occur:
	\begin{enumerate}[label=(\roman*)]
		\item\label{itm:1} For all $s \geq K$,
		\begin{equation}\label{eq_Bessel}
			\wc{W}_s = \wc{Y}_s \,.
		\end{equation}
		\item\label{itm:2} For all $s \geq K$,
		\begin{equation}\label{eq_D-E} 
			s^{\frac{1}{2}-\eps} \leq Y_s \leq 3(s \log \log s)^{\frac{1}{2}} \,.
		\end{equation}
		\item\label{itm:2.5} For all $s < K$,
		\begin{equation}\label{eq_UB} 
			\wc{W}_s \leq \Lambda \,.
		\end{equation}
		\item\label{itm:3} The process $Y^{(v)}$
		is $(\frac{1}{2}-\eps)$-H\"older for $\hat s \in [0, K]$ with H\"older constant $\Lambda$.
		\item\label{itm:4} For all $\hat{s} \geq K$,
		\begin{equation}\label{eq_D-E_hat} 
		\hat{s}^{\frac{1}{2}-\eps} \leq Y^{(v)}_{\hat{s}} \leq 3( \hat{s} \log \log \hat{s})^{\frac{1}{2}}.
		\end{equation}
		\item\label{itm:5} 
		With $\delta_\eps := c \log(1/\eps)^{-1/2}$ for some absolute constants $c > 0$
		\begin{equation}
			\zeta^{(v)} = \min_{\hat{s} \in [\delta_\eps, K]} \Big(\sqrt{2}\, Y^{(v)}_{\hat{s}} + \frac{1}{2\hat{s}}\Big) \leq (2\delta_\eps)^{-1} \,.\label{range_chi_v}
		\end{equation} 
		
		\item\label{itm:6} For every $\hat s \in [0, K]$, there exists $\sigma \in \wc{\cN}$ whose rescaled version $\hat{\sigma} := v^{-\frac{4}{3}} \sigma$ satisfies 
		\begin{equation}
			|\hat{\sigma} - \hat{s}| \leq K v^{-\frac{4}{3} + \eps}.
		\end{equation}
		\item\label{itm:7} For all $s \geq K$, it holds that $\wc{\cN}([0, s]) \leq 3s$.
	\end{enumerate}
\end{definition}

Theorem~\ref{2nd_result} will follow immediately from the cluster representation given by Theorem~\ref{p:1} together with the next two key propositions. The first shows that we may define the probability space and processes above such that the event $\cV^{(v)}(\eps, K, \Lambda)$ holds with high probability.
\begin{prop}\label{p:nice_event_whp} For any $\eps > 0$, there exists some $K, \Lambda < \infty$ such that for all $v \geq 1$, we may define the processes $(\wc{W}$, $\wc{\cN}$, $\wc{\cE})$ and $\wc{Y}$ on the same probability space, such that~\ref{itm:a},~\ref{itm:b} and~\ref{itm:c} hold, and 
\begin{equation}
	\bbP \big(\cV^{(v)}(\eps, K, \Lambda) \big) \geq 1 - \eps \,.
	\end{equation}
\end{prop}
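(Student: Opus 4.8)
The plan is to take $(\wc W, \wc\cN, \wc\cE)$ directly from Theorem~\ref{p:1}, so that~\ref{itm:a} holds and, by Part~1 of that theorem, $\wc\cE$ is independent of $(\wc W, \wc\cN)$. The remaining work is of two kinds: to construct, on an enlargement of the probability space, a process $\wc Y$ realizing~\ref{itm:b} and~\ref{itm:c}; and to choose $K = K(\eps)$ large, and then $\Lambda = \Lambda(\eps, K)$ large, so that each of the eight defining conditions of $\cV^{(v)}(\eps, K, \Lambda)$ --- \ref{itm:1},~\ref{itm:2},~\ref{itm:2.5},~\ref{itm:3},~\ref{itm:4},~\ref{itm:5},~\ref{itm:6},~\ref{itm:7} --- fails with probability at most $\eps/16$, \emph{uniformly in $v \geq 1$}; a union bound then yields $\bbP(\cV^{(v)}(\eps,K,\Lambda)) \geq 1 - \eps/2 \geq 1 - \eps$. (The estimates below are valid once $\eps$ is below an absolute threshold, which is all that is needed downstream.)

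For the construction of $\wc Y$ and for condition~\ref{itm:1}, I would first use~\eqref{e:2.3} (with $y_0 = 0$ there) to fix $K$ so large that $\|\bbP(\wc W_{[K,\infty)}\in\cdot) - \mu_K\|_{\rm TV} \leq \eps/16$, where $\mu_K$ is the law of the restriction to $[K,\infty)$ of a process $\wc Y$ as in~\ref{itm:b}, and then realize the maximal coupling of these two laws by means of $\wc W$ together with an auxiliary independent uniform variable (legitimate since $\wc\cE$ is independent of $(\wc W, \wc\cN)$). This produces a process $\wc Z \sim \mu_K$ with $\wc Z = \wc W_{[K,\infty)}$ off an event of probability $\leq \eps/16$. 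I would then define $\wc Y$ to be $\wc Z$ on $[K,\infty)$, and on $[0,K]$ to be drawn, with further independent randomness, from the conditional law on $[0,K]$ of a Bessel-3 path from $0$ (shifted by $-\tfrac{3}{2\sqrt2}\log^+$) given its time-$K$ value. By the Markov property $\wc Y$ then has exactly the law prescribed in~\ref{itm:b}; being a function of $\wc W$ and of randomness independent of $\wc\cE$, it satisfies~\ref{itm:c}; and on the agreement event $\wc W_s = \wc Y_s$ for all $s \geq K$, i.e.~\ref{itm:1} holds.

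The Bessel-type conditions I would handle with the preliminary estimates of Section~\ref{s:2}. Conditions~\ref{itm:2} and~\ref{itm:4} are the Dvoretzky--Erd\"os/LIL envelope for $Y$, resp.\ for $Y^{(v)}$; since $Y^{(v)} \overset{\rmd}{=} Y$ by~\eqref{e:205.3}, Lemma~\ref{D-E} (with $y_0 = 0$ and probability level $\eps/16$) gives both, for $K$ large. Condition~\ref{itm:3} is the $(\tfrac12-\eps)$-H\"older bound for $Y^{(v)} \overset{\rmd}{=} Y$ on $[0,K]$, which holds with probability $\geq 1 - \eps/16$ once $\Lambda$ is large, by Lemma~\ref{uniform_continuity}. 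Condition~\ref{itm:2.5} uses that $\wc W \in C([0,\infty))$, so $\sup_{[0,K]}\wc W < \infty$ a.s.\ and hence $\bbP(\sup_{[0,K]}\wc W > \Lambda) \leq \eps/16$ for $\Lambda$ large. For condition~\ref{itm:5}: by Brownian scaling, $\bbP(\zeta > t) \leq \bbP(\sqrt2\,Y_{1/t} + t/2 > t) = \bbP(\|\vec W_1\| > t^{3/2}/(2\sqrt2))$, which decays like $e^{-\Theta(t^3)}$ by the Gaussian tail of $\|\vec W_1\|$; evaluating at $t = (2\delta_\eps)^{-1} \asymp \sqrt{\log(1/\eps)}$ and using $\zeta^{(v)} \overset{\rmd}{=} \zeta$ gives $\bbP(\zeta^{(v)} > (2\delta_\eps)^{-1}) \leq \eps/16$; on the complementary event the minimizer of $\hat s \mapsto \sqrt2\, Y^{(v)}_{\hat s} + \tfrac{1}{2\hat s}$ cannot lie in $(0,\delta_\eps)$ (there the expression exceeds $(2\delta_\eps)^{-1}$), and on the event of~\ref{itm:4} not in $(K,\infty)$ either (there $\sqrt2\,Y^{(v)}_{\hat s} \geq \sqrt2\,K^{1/2-\eps} \geq (2\delta_\eps)^{-1}$ once $K$ is large relative to $\delta_\eps$), so it lies in $[\delta_\eps,K]$, which is~\ref{itm:5}.

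For the timestamp conditions, for~\ref{itm:6} I would cover $[0, Kv^{4/3}]$ by $O(v^{4/3-\eps})$ points spaced $\tfrac12 Kv^{\eps}$ apart, and bound, by the first estimate in~\eqref{e:101.14} and a union bound, the probability that some cover point has no atom of $\wc\cN$ within $\tfrac12 Kv^{\eps}$ by $C v^{4/3-\eps}e^{-cKv^{\eps}/2}$; for $K$ large the function $v \mapsto v^{4/3-\eps}e^{-cKv^{\eps}/2}$ is decreasing on $[1,\infty)$, so this bound is at most $Ce^{-cK/2} \leq \eps/16$, and on the complementary event every $\hat s \in [0,K]$ has, after rescaling, an atom of $\wc\cN$ within $Kv^{-4/3+\eps}$. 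For~\ref{itm:7} I would use the second estimate in~\eqref{e:101.14} with slack $\tfrac12$: $\bbP(\wc\cN([0,n+1]) > 3n) \leq \bbP(\wc\cN([0,n+1]) > \tfrac52(n+1)) \leq C'e^{-c'n}$ for $n \geq 5$, and sum over integers $n \geq K$, using that $s \mapsto \wc\cN([0,s])$ is non-decreasing, to get $\bbP(\exists\, s \geq K :\: \wc\cN([0,s]) > 3s) \leq \eps/16$ for $K$ large. A union bound over the eight events then completes the proof. The hardest part will be the coupling of the second step --- arranging that $\wc Y$ simultaneously carries the prescribed law of~\ref{itm:b}, agrees with $\wc W$ on a half-line, and stays independent of $\wc\cE$ --- together with securing the $v$-uniformity of the timestamp-density estimate behind~\ref{itm:6}.
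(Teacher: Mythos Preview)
Your proposal is correct and follows essentially the same approach as the paper: invoke Theorem~\ref{p:1} for $(\wc W,\wc\cN,\wc\cE)$, use the total-variation bound~\eqref{e:2.3} to couple $\wc Y$ to $\wc W$ on $[K,\infty)$ via auxiliary randomness independent of $\wc\cE$, handle the Bessel-type conditions via Lemmas~\ref{uniform_continuity} and~\ref{D-E} together with the scaling invariance~\eqref{e:205.3}, and handle the timestamp conditions via~\eqref{e:101.14} with a covering/union-bound argument. Your treatment is in fact more detailed in a few places --- you make the maximal-coupling construction of $\wc Y$ explicit, you give a sharper tail for $\zeta$ (your $e^{-\Theta(t^3)}$ versus the paper's sub-Gaussian), and you spell out Condition~\ref{itm:7}, which the paper's proof only implicitly covers --- but the overall structure and the key inputs are the same.
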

The second shows that, conditional on $\cV^{(v)}(\eps, K, \Lambda)$ the normalized deviation of the size of the cluster level-set $[-v,0]$, tends in probability to $\zeta^{(v)}$.
\begin{prop}\label{2nd_result_quantitative} For every $\eps > 0$ small enough and $K, \Lambda < \infty$ large enough, there exists $v_0 = v_0(\eps, K, \Lambda) < \infty$ such that for every $v \geq v_0$, on the event $\cV^{(v)}(\eps, K, \Lambda)$,
	\begin{equation}
		\bbP \bigg(\bigg|\frac{\log \wc{\cC}([-v, 0]) -\sqrt{2} v }{- v^{\frac{2}{3}}} - \zeta^{(v)}\bigg| > \epsilon \,\bigg|\, \wc{W}, \wc{\cN}, \wc{Y}  \bigg) <   \epsilon \,.
	\end{equation}
\end{prop}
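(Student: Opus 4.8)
The plan is to condition on the realization of $\wc{W}, \wc{\cN}, \wc{Y}$ inside the event $\cV^{(v)}(\eps, K, \Lambda)$ and estimate $\wc{\cC}([-v,0]) = \int_0^\infty \wc{\cE}_{s,\wc{W}_s}([-v-\wc{W}_s, 0])\,\wc{\cN}(\rmd s)$, using that conditionally the factors $\wc{\cE}_{s,\wc{W}_s}$ are independent extremal processes of BBM run for time $s$, conditioned to stay below $-\wc{W}_s$. The first ingredient is a quantitative first/second moment estimate for $\cE_s([-w,\infty))$ for BBM: by the truncated moment method (as referenced via Lemma~\ref{1st_moment} and Lemma~\ref{lower_bound_2} in the paper's plan), with high probability $\cE_s([-w,\infty)) = e^{\sqrt{2}w - w^2/(2s) + O(\log s)}$ uniformly over $w \leq s^{1-o(1)}$. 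Applying this with $w = v + \wc{W}_s$ and keeping track of the conditioning on $\{\cE_s((-\wc{W}_s,\infty)) = 0\}$ — which on $\cV^{(v)}$ has $-\wc{W}_s = Y_s + O(\log s) = \Theta(\sqrt{s})$ by items~\ref{itm:1},~\ref{itm:2}, so the conditioning costs only a polynomial-in-$s$ factor — I get that the $s$-integrand is, up to $v^{O(1)}$ and $e^{O(\log s)}$ multiplicative errors,
\begin{equation}
\exp\Big(\sqrt{2}(v + \wc{W}_s) - \tfrac{(v+\wc{W}_s)^2}{2s}\Big) = e^{\sqrt{2}v}\exp\Big(-\sqrt{2}\,Y_s - \tfrac{v^2}{2s} + \tfrac{v}{s}Y_s - \tfrac{Y_s^2}{2s} + O(\log s)\Big).
\end{equation}

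Next I would localize the integral to the relevant scale $s = \Theta(v^{4/3})$ and pass to the rescaled variables $\hat s = v^{-4/3}s$, $Y^{(v)}_{\hat s} = v^{-2/3}Y_s$. On this scale $\sqrt{2}Y_s + v^2/(2s) = v^{2/3}(\sqrt{2}Y^{(v)}_{\hat s} + 1/(2\hat s))$, while the cross term $vY_s/s = v^{1/3} Y^{(v)}_{\hat s}/\hat s$ and the quadratic term $Y_s^2/(2s) = v^{2/3}(Y^{(v)}_{\hat s})^2/(2\hat s)$ — wait, that last one is the same order as the main term, so I need to be careful: actually $Y_s^2/(2s)$ is comparable to $\sqrt{2}Y_s$ only when $Y_s \asymp \sqrt{s}$, which by item~\ref{itm:2} it is, so $Y_s^2/(2s) = O(\log\log s) \cdot$ — no: $Y_s \leq 3\sqrt{s\log\log s}$ gives $Y_s^2/(2s) \leq O(\log\log s)$, which is absorbed into the $O(\log s)$ error. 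Good. So after rescaling, the exponent is $\sqrt{2}v - v^{2/3}(\sqrt{2}Y^{(v)}_{\hat s} + \tfrac{1}{2\hat s}) + O(v^{1/3})$, uniformly over the bounded range $\hat s \in [\delta_\eps, K]$ by item~\ref{itm:5}. Using Laplace's method (the $\wc{\cN}$-integral behaves like Lebesgue measure up to bounded density fluctuations on this scale by items~\ref{itm:6},~\ref{itm:7}, and by~\eqref{e:101.14}), the log of the integral over $\hat s \in [\delta_\eps, K]$ is $\sqrt{2}v - v^{2/3}\min_{\hat s \in [\delta_\eps,K]}(\sqrt{2}Y^{(v)}_{\hat s} + \tfrac{1}{2\hat s}) + O(v^{1/3}\log v) = \sqrt{2}v - v^{2/3}\zeta^{(v)} + o(v^{2/3})$.

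It then remains to show the contributions from $\hat s \notin [\delta_\eps, K]$ are negligible on the log scale after dividing by $v^{2/3}$. For small times $\hat s < \delta_\eps$: there the factor $e^{-v^2/(2s)} = e^{-v^{2/3}/(2\hat s)}$ is superexponentially small, beating the polynomial number of atoms of $\wc{\cN}$ there and any moderate deviation of the integrand, so this contributes $e^{\sqrt{2}v - \Omega(v^{2/3}/\delta_\eps)}$, negligible since $1/\delta_\eps$ is large. For large times $\hat s > K$: by items~\ref{itm:4} and~\ref{itm:7}, $Y^{(v)}_{\hat s} \geq \hat s^{1/2-\eps}$ forces $\sqrt{2}Y^{(v)}_{\hat s} + \tfrac{1}{2\hat s} \to \infty$, so the integrand decays, and the number of $\wc{\cN}$-atoms grows only linearly; one must also make sure the uniform-over-$v$ first-moment control and the conditioning estimate survive out to $s$ slightly below the diffusive cutoff $s^{1-o(1)} \geq v$ where $w = v + \wc{W}_s$ is still admissible, and truncate even further out by a crude union bound over $\wc{\cN}$-atoms with $\bbE\cE_s([-w,\infty))$ as the per-atom bound. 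The main obstacle I expect is precisely this uniform control: assembling the two-sided estimate $\cE_s([-w,\infty)) = e^{\sqrt{2}w - w^2/(2s) + O(\log s)}$ with \emph{high probability simultaneously} over the continuum of values $s$ (hence over the at most $3s$-many atoms of $\wc{\cN}$), and doing so with the additional conditioning on staying below $-\wc{W}_s \asymp \sqrt s$ — this requires a careful truncated second moment argument with the truncation adapted to the deep level set $[-w,\infty)$ and a discretization/chaining over $s$, and controlling the lower deviations (that no single atom's contribution collapses the integral) is where the delicate work lies. Once that probabilistic input is in hand, the remaining Laplace-asymptotics and the passage to $\zeta^{(v)}$ via the Bessel scaling invariance~\eqref{e:205.3} are routine.
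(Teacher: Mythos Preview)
Your overall strategy --- condition on $(\wc W,\wc\cN,\wc Y)$, control each summand by the level-set asymptotics $e^{\sqrt 2 w - w^2/(2s)}$, rescale to $\hat s$, and identify the exponent with $\zeta^{(v)}$ --- matches the paper's proof. The handling of the conditioning (it costs a bounded factor since $\wc W_s\leq\Lambda$ uniformly) and the breakdown into small/intermediate/large $\hat s$ are also the same.

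Where you diverge is in what you flag as the ``main obstacle.'' You anticipate needing a \emph{two-sided, high-probability, simultaneous-over-all-$s$} control of $\cE_s([-w,\infty))$ and a chaining argument; in fact the paper needs no such thing, and this is the key simplification you are missing.

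For the \emph{upper bound}, no per-atom high-probability estimate is required at all: one simply bounds the conditional expectation $\ol\bbE\,\wc\cC([-v,0])$ by summing the first-moment bound of Lemma~\ref{1st_moment} over all atoms of $\wc\cN$ (using condition~\ref{itm:7} to control the atom count), obtains $\ol\bbE\,\wc\cC([-v,0])\leq \exp(\sqrt 2 v - v^{2/3}\zeta^{(v)}+O(\eps^{-2}v^{1/3}))$, and concludes by Markov's inequality. No second moment, no union bound over atoms.

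For the \emph{lower bound}, rather than a Laplace principle over the whole integral, a \emph{single} atom suffices: by condition~\ref{itm:5} the infimum defining $\zeta^{(v)}$ is attained at some $\hat s_*\in[\delta_\eps,K]$; condition~\ref{itm:6} supplies an atom $\sigma_*\in\wc\cN$ with $|\hat\sigma_*-\hat s_*|\leq Kv^{-4/3+\eps}$; condition~\ref{itm:3} (H\"older continuity of $Y^{(v)}$) transfers $\zeta^{(v)}$ to $\hat\sigma_*$ up to $o(1)$; and a single application of Lemma~\ref{lower_bound_2} with $r=v^{1/3}$ gives, with probability $\geq 1-Ce^{-cv^{1/3}}$, that this one term already exceeds $\exp(\sqrt 2 v - v^{2/3}\zeta^{(v)}-\eps v^{2/3})$. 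So the ``delicate work'' you describe --- simultaneous lower bounds over many atoms --- is unnecessary.
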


Assuming the above two, we can readily give
\begin{proof}[Proof of Theorem~\ref{2nd_result}]
Combining Proposition~\ref{p:nice_event_whp} and Proposition~\ref{2nd_result_quantitative},  we get
\begin{equation}
\label{e:5.11}
	\bbP\left(\left|\frac{\log \wc{\cC}([-v, 0]) -\sqrt{2} v }{- v^{\frac{2}{3}}} - \zeta^{(v)}\right| > \eps \right) \to 0 
\end{equation}
as $v \to \infty$ for any given $\eps > 0$. Since $\zeta^{(v)}$ and $\wc{\cC}$ have the same laws, respectively, as $\zeta$ and $\cC$, this completes the proof.
\end{proof}

\subsection{Proof of key propositions}
\begin{proof}[Proof of Proposition~\ref{p:nice_event_whp}]
Existence of a probability space which carries the process $(\wc{W}$, $\wc{\cN}$, $\wc{\cE})$ is given by Theorem~\ref{p:1}. The fact that we can define the process $\wc{Y}$ on the same (possibly larger version of the original) space such that~\eqref{eq_Bessel} holds with arbitrarily (but given) high probability provided $K$ is large enough is a standard consequence of Part~2 of Theorem~\ref{p:1} and the existence of a coupling under which the Total Variation distance identifies with the probability of distinguishability. The independence between $(\wc{W},\wc{\cN})$ and $\wc{\cE}$ implies that we can define $\wc{Y}$ such that it is independent of $\wc{\cE}$ as well.

 Arbitrarily high probability, provided $K$ and then $\Lambda$ are chosen large enough, is guaranteed for the probability of Conditions~\ref{itm:2},~\ref{itm:3},~\ref{itm:4},~\ref{itm:6}, uniformly in $v$,  thanks to Lemma~\ref{uniform_continuity}, Lemma~\ref{D-E} and Theorem~\ref{p:1} and the law invariance~\eqref{e:205.3}. For any $K$, Condition~\ref{itm:2.5} holds for large enough $\Lambda$ thanks to the continuity of $\wc{W}$ on the compact interval $[0,K]$.
 For~\ref{itm:5}, in view of~\eqref{e:205.3}, we may prove~\eqref{range_chi_v} for $\zeta$ in place of $\zeta^{(v)}$. We then observe that $\zeta$ necessarily has sub-Gaussian right tails since
	 \begin{equation}
	 \bbP(\zeta > x) \leq \bbP\big(\sqrt{2} Y_1 + \tfrac{1}{2} > x\big) \leq C e^{-cx^2} \,,
	 \end{equation}
	for some absolute constants $c, C > 0$. Together with the growth behavior given by condition~\ref{itm:4} and positivity of $Y$, this gives~\eqref{range_chi_v}.

	Lastly for \ref{itm:6}, by the third assertion in Theorem~\ref{p:1}, with probability at least $1 - Ce^{-c K v^\eps}$ there exists $\sigma \in \wc{\cN}$ such that
	\begin{align}
		|\sigma - s| \leq K v^\eps \iff	|\hat{\sigma} - \hat{s}| \leq v^{-\frac{4}{3}+\eps} K.
	\end{align}
	Now partition $[0, K]$ into at most $K/(v^{-\frac{4}{3}+\eps} K) + 1 \leq v^{\frac{4}{3}}$ many intervals of length at most $v^{-\frac{4}{3}+\eps} K$. Then by applying the estimate above to the midpoints of these intervals, the probability that each interval will contain $\hat{\sigma}$ for some atom $\sigma \in \wc{\cN}$ is at least $1 - C v^{\frac{4}{3}} e^{-c K v^\eps}$. This can be made arbitrarily close to $1$ uniformly in $v$ by choosing $K$ large enough. Finally we invoke the Union Bound to ensure that all conditions hold simultaneously with as high probably as desired.
\end{proof}

Let us now turn to the proof of Proposition~\ref{2nd_result_quantitative}. Here we rely on two lemmas which control the size of the {\em deep} extremal level sets of BBM, namely at height at least $m_s - v$ where $v \leq s^{1-o(1)}$ and $s$ is the time parameter. While these are fairly standard by now, for the sake of completeness, we shall include a brief proof in the end of this section. The first is an upper bound on the first moment.
\begin{lem}
\label{1st_moment} There exists constants $C < \infty$ such that for $s \geq 1$ and $0\leq v \leq s$,
	\begin{equation}
	\label{e:105.17}
		\bbE\, \cE_s([-v, \infty)) \leq C \, s e^{\sqrt{2} v - \frac{v^2}{2s} } \,.
	\end{equation}
\end{lem}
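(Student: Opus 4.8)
The plan is to reduce the first-moment bound to a Gaussian tail estimate via the Many-to-One Lemma and then do the exponent bookkeeping using the explicit form of $m_s$.

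First I would apply Lemma~\ref{l:4.1} with the bounded, $\cF_s$-measurable function $F(x) = 1_{\{h_s(x) \geq m_s - v\}}$. Since under $\wt{\bbP}$ the spine height $(h_u(X_u))_{u\geq 0}$ is a standard Brownian motion, $h_s(X_s)$ is centered Gaussian with variance $s$, so
\begin{equation}
\bbE\, \cE_s([-v,\infty)) = \bbE \sum_{x \in L_s} 1_{\{h_s(x) \geq m_s - v\}} = e^s\, \wt{\bbP}\big(h_s(X_s) \geq m_s - v\big) = e^s\, \bbP\big(W_s \geq m_s - v\big) \,.
\end{equation}
Next I would dispose of bounded times: for $s$ in any fixed interval $[1,s_0]$ the left-hand side is at most $e^{s_0}$, while $s\, e^{\sqrt2 v - v^2/(2s)} \geq s \geq 1$ because $v^2/(2s) \leq v/2 < \sqrt2 v$ when $0\leq v \leq s$; so the claim holds on that range with $C = e^{s_0}$. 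It therefore suffices to treat $s \geq s_0$ for a suitable absolute constant $s_0$, chosen so that $a := m_s - v$ satisfies
\begin{equation}
a \;\geq\; m_s - s \;=\; (\sqrt2 - 1)s - \tfrac{3}{2\sqrt2}\log s \;\geq\; \tfrac{\sqrt2 - 1}{2}\, s \;=:\; c\,s \;>\;0 \,.
\end{equation}
In particular $a>0$, so the standard Gaussian tail bound $\bbP(W_s \geq a) \leq \frac{\sqrt s}{\sqrt{2\pi}\,a}\, e^{-a^2/(2s)}$ applies.

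The crux is the exponent identity. Writing $m_s = \sqrt2 s - \tfrac{3}{2\sqrt2}\log s$ and using $(\sqrt2 s - v)^2/(2s) = s - \sqrt2 v + v^2/(2s)$, I would expand
\begin{equation}
s - \frac{a^2}{2s} \;=\; \sqrt2 v - \frac{v^2}{2s} + \frac{3}{2\sqrt2}\log s\Big(\sqrt2 - \frac{v}{s}\Big) - \frac{1}{2s}\Big(\frac{3}{2\sqrt2}\log s\Big)^2 \;\leq\; \sqrt2 v - \frac{v^2}{2s} + \frac{3}{2}\log s \,,
\end{equation}
where the last step drops the nonpositive final term and uses $v/s \geq 0$. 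Plugging this bound and $a \geq cs$ into the Gaussian estimate gives
\begin{equation}
e^s\, \bbP(W_s \geq a) \;\leq\; \frac{\sqrt s}{\sqrt{2\pi}\,a}\, e^{s - a^2/(2s)} \;\leq\; \frac{s^{2}}{\sqrt{2\pi}\,a}\, e^{\sqrt2 v - v^2/(2s)} \;\leq\; \frac{1}{c\sqrt{2\pi}}\, s\, e^{\sqrt2 v - v^2/(2s)} \,,
\end{equation}
which is the assertion with $C = \max\{e^{s_0},\, (c\sqrt{2\pi})^{-1}\}$.

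I do not expect a genuine obstacle; the only points requiring a little care are the signs of the $\log$-order terms in the exponent expansion (so that they are absorbed into the harmless $s^{3/2}$ prefactor) and the lower bound $a \gtrsim s$ that turns the $\frac{\sqrt s}{a}$ factor from the Gaussian tail into a constant — both of which follow at once from $0 \leq v \leq s$.
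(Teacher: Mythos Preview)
Your proof is correct and follows essentially the same route as the paper: Many-to-One reduces $\bbE\,\cE_s([-v,\infty))$ to $e^s\,\bbP(W_s \geq m_s - v)$, then the Gaussian tail bound and the expansion of $(m_s - v)^2/(2s)$ finish it. The paper compresses the exponent bookkeeping into the phrase ``by a simple computation,'' whereas you spell out the $\tfrac{3}{2}\log s$ term, the lower bound $a \gtrsim s$, and the separate treatment of small $s$; these are exactly the details one would need to fill in.
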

The second is a coarse lower bound with high probability. 
\begin{lem}
\label{lower_bound_2} There exists constants $c, C > 0$ such that for $s \geq 1, v \leq \frac{s}{1 + \log s}, 0\leq r \leq v/2$,
	\begin{equation}
		 \bbP\Big(\cE_s([-v, \infty)) \geq  c e^{-Cr} e^{\sqrt{2} v - \frac{v^2}{2s}}  \Big) \geq 1 - C e^{-cr}.
	\end{equation}
\end{lem}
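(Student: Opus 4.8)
The plan is to combine a truncated second-moment (Paley--Zygmund) estimate for a single BBM with a restart at time $r$, exploiting that at time $r$ there are already many particles close to the front, each of whose subtrees independently reaches the desired level with a probability bounded away from $0$. The exponential-in-$r$ probability in the statement will come from this last point, not from concentration of a single moment.

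\emph{Step 1: a single-BBM estimate.} For a time horizon $s'$ and a level $w$ with $0\le w\le s'/(1+\log s')$ I would introduce the truncated count $N(s',w)$ of particles $x\in L_{s'}$ with $h_{s'}(x)\in[m_{s'}-w,m_{s'}-w+1]$ whose ancestral path $(h_u(x_u))_{u\le s'}$ stays below the straight line joining $(0,0)$ to $(s',m_{s'}-w)$, with a small logarithmic bend near the endpoints so that the ballot estimates apply cleanly. A Many-to-One computation together with the Gaussian density and Lemma~\ref{l:102.1} gives $\bbE N(s',w)\asymp e^{\sqrt2 w-w^2/(2s')}$ (the barrier absorbs exactly the linear-in-$s'$ prefactor of the untruncated first moment of Lemma~\ref{1st_moment}), while a Many-to-two / spine computation, in which the contribution of pairs whose ancestral lines split at time $u$ is integrated against $\rmd u$ and shown to converge thanks to the barrier, yields $\bbE N(s',w)^2\le C\big(\bbE N(s',w)+(\bbE N(s',w))^2\big)$. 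Paley--Zygmund then gives
\[
\bbP\Big(N(s',w)\ge\tfrac12\bbE N(s',w)\Big)\ \ge\ c\,\big(\bbE N(s',w)\wedge1\big)\,.
\]
Both moment bounds are routine by now; the only point requiring attention is their uniformity down to the deep levels $w\le s'/(1+\log s')$ (with the constant $1$ here non-sharp), which is where the analogue of the hypothesis on $v$ enters, exactly as in the proof of Lemma~\ref{1st_moment}.

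\emph{Step 2: restarting at time $r$.} Fix $\beta\in(0,1)$ small enough that $1+\sqrt2\beta<2\sqrt2-1$, and work conditionally on $\cF_r$. I claim that, with probability at least $1-Ce^{-cr}$, there are at least $\lceil r\rceil$ particles $x\in L_r$ with $h_r(x)\in\big[(\sqrt2-\beta)r,(\sqrt2-\tfrac\beta2)r\big]$. Each such $x$ spawns an independent BBM of length $s-r$, and the number of its descendants above $m_s-v$ at time $s$ has the law of $\cE_{s-r}([-w_x,\infty))$ with $w_x:=v+h_r(x)-(m_s-m_{s-r})$. Since $m_s-m_{s-r}=\sqrt2 r-\tfrac{3}{2\sqrt2}\log\tfrac{s}{s-r}=\sqrt2 r+O(1)$ and $h_r(x)\in[(\sqrt2-\beta)r,(\sqrt2-\tfrac\beta2)r]$, one gets $v-\beta r+O(1)\le w_x\le v+O(1)$, so $w_x$ lies in the regime of Step~1; using $v\le s$ and $r\le v/2$, which force $\sqrt2 v-v^2/(2s)\ge(\sqrt2-\tfrac12)v\ge(2\sqrt2-1)r$, together with $w_x^2/(2(s-r))\le v^2/(2s)+r+O(1)$, one obtains
\[
\sqrt2\,w_x-\frac{w_x^2}{2(s-r)}\ \ge\ \Big(\sqrt2 v-\frac{v^2}{2s}\Big)-(1+\sqrt2\beta)\,r-O(1)\,,
\]
whose right-hand side is bounded below by an absolute constant (by the choice of $\beta$, since $2\sqrt2-1>1+\sqrt2\beta$). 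Hence $\bbE N(s-r,w_x)\ge c_0>0$ uniformly, and $\tfrac12\bbE N(s-r,w_x)\ge c_0'\,e^{-(1+\sqrt2\beta)r}e^{\sqrt2 v-v^2/(2s)}$; by Step~1 the subtree of $x$ therefore contains at least $c_0'\,e^{-(1+\sqrt2\beta)r}e^{\sqrt2 v-v^2/(2s)}$ particles above $m_s-v$ with conditional probability at least a fixed $p_0>0$. These events are conditionally independent over the (at least $\lceil r\rceil$) particles $x$, so the chance that none of them occurs is at most $(1-p_0)^{\lceil r\rceil}\le Ce^{-cr}$. Combined with the claim this gives the lemma with $C:=1+\sqrt2\beta$.

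\emph{The main obstacle} is the claim in Step~2 that there are $\gtrsim r$ near-front particles at time $r$ with probability $\ge1-Ce^{-cr}$: a single first/second-moment estimate for this count yields only a constant probability, so one must use the branching structure. One route: on the event $\{h_{r(1-\eta)}^*\ge(\sqrt2-\tfrac\beta4)r(1-\eta)\}$ — which has probability $\ge1-Ce^{-cr}$ by the (uniform-in-time) exponential left tail of the centred maximum of BBM — a particle attaining that height spawns, over the remaining time $\eta r$ (with $\eta\asymp\beta$ chosen so the target band sits at distance $\asymp\beta r$ below the front of this subtree), a deep level set whose expected size is $e^{\Theta(r)}$; a further first/second-moment argument, combined with a Chernoff bound over the $\Theta(r)$ Poissonian branchings along a near-front line, then places $\gtrsim r$ particles in the band with probability $\ge1-Ce^{-cr}$. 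Alternatively one may cite a standard "many particles near the front" estimate. Apart from this, the only bookkeeping concerns the $\tfrac{3}{2\sqrt2}\log$ corrections hidden in $m_s$ versus $m_{s-r}$ and in the truncation barrier; these are $O(1)$ on the relevant scales and are harmless, since for $r$ below a fixed constant the asserted inequality is vacuous ($1-Ce^{-cr}\le0$) and otherwise $r$ is bounded below so all $O(1)$ errors are absorbed into $C$ and $c$.
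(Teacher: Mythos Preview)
Your Step~1 and the overall architecture of the argument (Paley--Zygmund for a constant-probability bound, then restart at time $r$ and exploit independence over subtrees) match the paper exactly. The difference lies entirely in Step~2, and here the paper's route is considerably simpler and sidesteps your ``main obstacle'' entirely.

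The paper does not use near-front particles. Instead it conditions on the crude event
\[
\cA_r := \Big\{|L_r| \geq e^{r/2}\,,\ \min_{x\in L_r} h_r(x) \geq -2r\Big\}\,,
\]
which already has probability $\ge 1 - Ce^{-cr}$ by two elementary facts: $|L_r|$ is geometric with mean $e^r$, and the centred minimum of BBM is tight (so the minimum is $\ge -\sqrt{2}r - O(1)$ with exponentially high probability). On $\cA_r$ there are at least $e^{r/2}$ particles, each at height $\ge -2r$, and each subtree of duration $s-r$ needs to place particles above $m_s - v$; in the subtree's own frame this amounts to the event $\cE_{s-r}([-v + O(r),\infty)) \ge z$, which by Step~1 has probability at least some fixed $\delta>0$ (the $O(r)$ shift is what produces the $e^{-Cr}$ in the threshold). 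The failure probability is then at most $(1-\delta)^{e^{r/2}}$, which is far more than enough.

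So the paper trades your ``$\gtrsim r$ near-front particles'' claim --- which, as you correctly flag, requires a non-trivial additional argument --- for two one-line facts about $|L_r|$ and $\min h_r$. Your approach would work, but you are paying for precision you do not need: since the cost of starting from a low particle is only multiplicative $e^{-Cr}$ in the target threshold (already allowed by the statement), there is no reason to insist on particles near the front. The insight you are missing is that the crude minimum bound suffices; once you see this, the obstacle disappears.
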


We can now give
\begin{proof}[Proof of Proposition~\ref{2nd_result_quantitative}]
Throughout this proof we shall abbreviate $\ol{\bbP}(-) \equiv \bbP(-|\wc{W}, \wc{\cN}, \wc{Y})$. We also fix $\epsilon$, $K$ and $\Lambda$, let $v \geq K$ and ``work'' on the event $\cV^{(v)}(\eps, K, \Lambda)$.
For the upper bound, we use a first moment bound and for all $v$ control
\begin{equation}
\label{e:5.16}
\ol{\bbE} \wc{\cE}^{(s)}_{s,\wc{W}_s}
= \bbE \Big[\cE_s([-v, \infty) - \wc{W}_s) \,\Big|\, h_s^* \leq -\wc{W}_s\Big] \,,
\end{equation}
with $\cE$ as in~\eqref{e:O.2}. (Here and after, whenever used inside the expectation $\bbE$ or to define events of which the $\bbP$-probability is evaluated, the quantity $\wc{W}_s$ is treated as deterministic constant.)

Conditions~\ref{itm:1},~\ref{itm:2.5} and~\ref{itm:3} and Part~\ref{itm:b} in the definition of $\wc{Y}$, imply that $\wc{W}$ is uniformly upper bounded by $\Lambda$. It then follows from the tightness of the centered maximum, that we may drop the conditioning on the right hand side of~\eqref{e:5.16}, with the unconditional mean being an upper bound up to a harmless multiplicative constant, which depends only on $\Lambda$.

Now, whenever $s \leq v$, we trivially bound 
\begin{equation}
\label{e:105.18}
	\bbE \big[\cE_s([-v, \infty) - \wc{W}_s)\big] \leq \bbE|L_s| = \rme^s \leq \rme^v \,.
\end{equation}
Otherwise, by~\eqref{e:105.1},~\eqref{Y_rescaled_def} and~\eqref{eq_Bessel} and Lemma~\ref{1st_moment}, the last mean is at most 
\begin{equation} \label{eq_1st_moment_v}
\begin{split}
s^{C} \exp \Big(\sqrt{2} v - \big(\sqrt{2} -  \frac{v}{s}\big) & Y_s - \frac{v^2}{2s}\Big)  
	= (\hat{s} v)^{C} \exp \bigg(\sqrt{2} v - v^{\frac{2}{3}} \big(\sqrt{2} - v^{-\frac{1}{3}}\hat{s}^{-1} \big) Y^{(v)}_{\hat{s}} - \frac{v^{\frac{2}{3}}}{2\hat{s}}\bigg) \\
	& = (\hat{s} v)^{C} \exp \bigg(\sqrt{2}v-v^{\frac{2}{3}} \bigg(1 -  \frac{v^{-\frac{1}{3}} \hat{s}^{-1}}{\sqrt{2}} \bigg) \Big(\sqrt{2} Y^{(v)}_{\hat{s}}  + \frac{1}{2\hat{s}}\Big) - \frac{v^{\frac{1}{3}}}{2\sqrt{2}}\bigg) \,.
\end{split}
\end{equation}
For $s \in [v, \eps v^{\frac{4}{3}}]$, we simply drop the $Y_s$ term in \eqref{eq_1st_moment_v} and obtain the uniform bound
	\begin{align}
		\bbE \,\cE_s\big([-v, \infty) - \wc{W}_s\big)  \leq v^{C} \exp\left(\sqrt{2} v- (2\eps)^{-1} v^{\frac{2}{3}}\right). \label{eq:s_leq_v^4/3}
	\end{align}
For $s \in [\eps v^{\frac{4}{3}}, K v^{\frac{4}{3}}]$, we have $\hat s \in [\eps, K]$,  so that by~\eqref{chi_rescaled_def}, ~\eqref{range_chi_v} and the r.h.s. of~\eqref{eq_1st_moment_v},
	\begin{equation}
		\bbE \,\cE_s\big([-v, \infty) - \wc{W}_s\big) \leq \exp \left(\sqrt{2} v -v^{\frac{2}{3}}  \zeta^{(v)} +  C(\eps^{-2}v^\frac{1}{3})\right) \label{eq:s_leq_K_v^4/3} \,,
	\end{equation}
Finally, for $s \geq K v^{\frac{4}{3}}$, or $\hat s \geq K$, we drop the last term in the exponent of the middle expression in~\eqref{eq_1st_moment_v} to get
	\begin{align}
		\bbE \,\cE_s\big([-v, \infty) - \wc{W}_s\big) \leq \exp \left(\sqrt{2} v - \tfrac12 v^{\frac{2}{3}} \hat{s}^{\frac{1}{2}-\eps} \right) \label{eq:s_geq_K_v^4/3} \,,
	\end{align}
where we used the lower bound in~\eqref{eq_D-E_hat}.
Observe that for $\epsilon$ small enough and all $v$ large enough, the bounds in~\eqref{e:105.18},~\eqref{eq:s_leq_v^4/3} are dominated by that
in~\eqref{eq:s_leq_K_v^4/3}, thanks to~\eqref{range_chi_v}. 

Plugging these estimates in~\eqref{eq:C_limit_r_to_infty} we thus get
\begin{equation}\label{e:upper_bound_final_integral_calculation}
\begin{split}
	\rme^{-\sqrt{2}v} \ol{\bbE} \wc{\cC}\big([-v ,\infty)\big)
	& \leq \exp\big(-v^{\frac{2}{3}}  \zeta^{(v)} +  C(\eps^{-2}v^\frac{1}{3})\big)
		\wc{\cN}([0, Kv^{4/3}]) 
		+ \int_{Kv^{4/3}}^{\infty} \rme^{-\tfrac12 v^{\frac{2}{3}} \hat{s}^{\frac{1}{2}-\eps}} \wc{\cN}(ds) \\
	& \leq \exp\big(-v^{\frac{2}{3}}  \zeta^{(v)} +  C'(\eps^{-2}v^\frac{1}{3})\big)
	+ 3\sum_{k = K}^\infty \exp\left(-v^{\frac{2}{3}}  k^{\frac{1}{2}-\eps} \right) \times k v^{\frac{4}{3}} \\
	& \leq \exp\big(-v^{\frac{2}{3}}  \zeta^{(v)} +  C'(\eps^{-2}v^\frac{1}{3})\big) \,.
\end{split}
\end{equation}
The desire upper bound now follows from Markov's inequality.

Turning to the lower bound, by Conditions~\ref{itm:5} and~\ref{itm:6}, there exists $\hat{s}_* \in [\delta_\eps, K]$ such that
\begin{equation}
	\zeta^{(v)} = \sqrt{2}\, Y^{(v)}_{\hat{s}_*} + \frac{1}{2\hat{s}_*} \,,
\end{equation} 
and some $\sigma_* \in \wc{\cN}$ such that
\begin{equation}
	|\hat{\sigma}_* - \hat{s}_*| \leq K v^{-\frac{4}{3}+\eps}.
\end{equation} 
By the H\"older continuity \ref{itm:3} of $Y$ on $[0, K]$, this implies
\begin{equation}\label{eq:Bessel_minimized}
\sqrt{2}\, Y^{(v)}_{\hat{\sigma}_*} + \frac{1}{2\hat{\sigma}_*} \leq \zeta^{(v)} + \sqrt{2} \Lambda \left(K v^{-\frac{4}{3}+\eps}\right)^{\frac{1}{2}-\eps} + \frac{K v^{-\frac{4}{3}+\eps}}{2\delta_\eps^2}.
\end{equation}

Observe that $s_* := v^{4/3} \hat{s}_*$ and hence $\sigma_* := v^{4/3} \hat{\sigma}_*$ can become arbitrarily large by choosing $v$ large enough. It thus follows from the tightness of the centered maximum and Lemma~\ref{lower_bound_2} (with $r = v^{\frac{1}{3}}$) that for all such $v$,
\begin{equation}\label{eq:lower_bound_C_r}
	\wc{\cE}^{(\sigma_*)}_{\sigma_*}\big([-v, \infty) - \wc{W}_{\sigma_*}\big) \geq  \exp\left(\sqrt{2} (v+\wc{W}_{\sigma_*}) - \frac{(v+\wc{W}_{\sigma_*})^2}{2\sigma_*} - C v^{\frac{1}{3}}\right)
\end{equation}
occurs with $\ol{\bbP}$ probability at least $1 - C e^{-cv^{\frac{1}{3}}}$ for $v$ large enough. As in~\eqref{eq_1st_moment_v} the right hand side is at least 
\begin{equation}
\exp \Big(-\sqrt{2}v-v^{\frac{2}{3}} \Big(\sqrt{2} Y_{\hat{\sigma}_*}^{(v)}  + \frac{1}{2\hat{\sigma}_*} \Big) + O(\eps^{-2} v^\frac{1}{3}) \Big)
\geq 
\exp \Big(-\sqrt{2}v -v^{\frac{2}{3}} \zeta^{(v)} -\epsilon v^{2/3}\Big) \,,
\end{equation}
for all $v$ large enough,  where we have used~\eqref{eq:Bessel_minimized} for the second inequality. 
It remains to observe that, in view of~\eqref{eq:C_limit_r_to_infty}, the left hand side above is also a lower bound on $\wc{\cC}([-v, 0])$.
\end{proof}

\subsection{Control of the deep extreme level sets}
In this subsection we prove Lemma~\ref{1st_moment} and Lemma~\ref{lower_bound_2}. As these arguments are standard, we allow ourselves to be brief. The proof of the first of the two lemmas is immediate.
\begin{proof}[Proof of Lemma~\ref{1st_moment}]
By the Many-To-One Lemma~\ref{l:4.1} and the Gaussian tail formula, the desired mean is equal to 
\begin{equation}
\label{e:105.31}
	\rme^s \times \frac{1}{\sqrt{2\pi}} \frac{\sqrt{s}}{m_s-v} \rme^{-\frac{(m_s - v)^2}{2s}} \,,
\end{equation}
which is bounded by the right hand side of~\eqref{e:105.17} by a simple computation.
\end{proof}

To obtain a lower bound with high probability, we employ a severe (but sufficient for our purposes) truncation. Explicitly, in place of $\cE_s$ we define
\begin{equation}
	\wt{\cE}_s := \sum_{x \in L_s} \delta_{h_s(x) - m_s} 
		1_{\{(h_r(x_r))_{r \leq s} \in J_s\}} \,,
\end{equation}
where we recall that $x_r$ stands for the ancestor of $x \in L_s$ at time $r \leq s$, and
	\begin{equation}
		J_s := \Big\{w \in \bbR^{[0,s]} :\: w_r \leq \frac{r}{s}w_s + 1 \,,\,\, r \in [0,s] \Big\} \,.
	\end{equation}

The next lemma gives lower and upper bounds on the moments of the level sets of $\wt{\cE}_s$. 
\begin{lem}
\label{l:5.6}
For all $s \geq 1$, $0\leq v \leq \frac{s}{1+\log s}$, 
\begin{equation}
\label{e:105.34}
	\bbE \wt{\cE}_s([-v, \infty)) \asymp e^{\sqrt{2} v - \frac{v^2}{2s} }
\end{equation}
and
\begin{equation}
\label{e:105.35}
	\bbE \wt{\cE}_s([-v, \infty))^2 \leq C \Big(\bbE \wt{\cE}_s\big([-v, \infty)\big)\Big)^2 \,.
\end{equation}
\end{lem}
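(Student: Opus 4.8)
\emph{First moment.} The plan is to reduce \eqref{e:105.34} to a one‑dimensional Brownian computation via the Many‑To‑One Lemma~\ref{l:4.1}: this gives $\bbE\wt\cE_s([-v,\infty)) = e^s\,\bbP\big(W_s \geq m_s - v,\ W_r \leq \tfrac rs W_s + 1\ \forall r\in[0,s]\big)$ for a standard BM $W$ started at $0$. Writing $W_r = \tfrac rs W_s + \beta_r$ with $\beta$ the associated Brownian bridge on $[0,s]$ — which is independent of $W_s$ — the truncation event becomes $\{\max_{[0,s]}\beta\le1\}$, independent of $W_s$, so that
\[
\bbE\wt\cE_s([-v,\infty)) = e^s\,\bbP(W_s \geq m_s - v)\,\big(1 - e^{-2/s}\big).
\]
I would then use $1-e^{-2/s}\asymp s^{-1}$ for $s\ge1$, the Gaussian tail bound $\bbP(W_s\ge m_s-v)\asymp \tfrac{\sqrt s}{s}\exp\!\big(-(m_s-v)^2/2s\big)$ (valid since $m_s-v\asymp s$ uniformly for $s\ge1$, $0\le v\le s/(1+\log s)$), and finally expand $(m_s-v)^2/2s$ with $m_s=\sqrt2\,s-\tfrac3{2\sqrt2}\log^+s$, absorbing the corrections $\tfrac{(\log^+s)^2}{s}$ and $\tfrac{v\log^+ s}{s}\le 1$ into multiplicative constants. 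This yields $e^s\bbP(W_s\ge m_s-v)\asymp s\,e^{\sqrt2 v - v^2/2s}$ and hence \eqref{e:105.34}.

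\emph{Second moment: setup.} Write $\bbE\wt\cE_s([-v,\infty))^2 = \bbE\wt\cE_s([-v,\infty)) + R_s$, where $R_s=\bbE\sum_{x\ne x'}\mathbf 1_{A(x)}\mathbf 1_{A(x')}$ with $A(x)$ the truncation-and-level event. Since the exponent $\sqrt2 v - v^2/2s$ is nonnegative on the admissible range of $v$, \eqref{e:105.34} gives $\bbE\wt\cE_s([-v,\infty))\gtrsim1$, so the diagonal term is already $\lesssim\big(\bbE\wt\cE_s([-v,\infty))\big)^2$ and it remains to bound $R_s$. The plan is to decompose over the time $b\in(0,s)$ of the most recent common ancestor of a pair — a standard many‑to‑two computation (the expected number of ordered pairs with MRCA in $\rmd b$ is $2e^{2s-b}\,\rmd b$) — which gives $R_s=\int_0^s 2e^{2s-b}\,Q(b)\,\rmd b$, where $Q(b)$ is the probability that two trajectories $\Xi^1,\Xi^2$ agreeing on $[0,b]$ (as a BM) and independent on $[b,s]$ each satisfy $\Xi^i_s\ge m_s-v$ and $\Xi^i_r\le\tfrac rs\Xi^i_s+1$ for all $r\in[0,s]$.

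\emph{Second moment: bounding $Q(b)$.} Conditioning on $\Xi_b=y$ and $\Xi^i_s=m_s-v+a_i$ ($a_i\ge0$), the shared segment on $[0,b]$ must stay below $\tfrac rs(m_s-v)+1+\tfrac rs(a_1\wedge a_2)$ while each free segment on $[b,s]$ must stay below $\tfrac rs(m_s-v+a_i)+1$. I would apply the ballot estimate of Lemma~\ref{l:102.1} (after subtracting the interpolating line) to each of the three segments and multiply by the Gaussian transition densities of $\Xi_b\sim N(0,b)$ and $\Xi^i_s-\Xi_b\sim N(0,s-b)$, then integrate over $(y,a_1,a_2)$. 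The Gaussian weights pin $y$ near the linear profile $\tfrac bs(m_s-v)$ up to fluctuations of order $\sqrt{b(s-b)/s}$ and force $a_i=O(\sqrt{s-b})$; the decisive point is that the truncation keeps the common ancestor \emph{low}, so the positive correlation between $\Xi^1_s$ and $\Xi^2_s$ provides no gain and the two excursions on $[b,s]$ behave like two independent first‑moment events. Carrying out the Gaussian integrals should then produce $2e^{2s-b}Q(b)\lesssim \rho(b)\,e^{2\sqrt2 v - v^2/s}$ with $\int_0^s\rho(b)\,\rmd b<\infty$ (the integrand concentrated near $b=0$ and near $b=s$), giving $R_s\lesssim e^{2\sqrt2 v-v^2/s}\asymp\big(\bbE\wt\cE_s([-v,\infty))\big)^2$, which is \eqref{e:105.35}.

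\emph{Main obstacle.} The genuine work is in the last step: the ballot barrier depends on the \emph{random} endpoints $\Xi^i_s$, so the three‑segment ballot bound and the Gaussian integration over $(y,a_1,a_2)$ must be handled simultaneously and uniformly in $s\ge1$ and $v\le s/(1+\log s)$. It is precisely the restriction to $J_s$ that converts an otherwise divergent $b$‑integral near $b=s$ into a convergent one, and the delicate bookkeeping is to ensure all polynomial and logarithmic prefactors cancel against $\big(\bbE\wt\cE_s([-v,\infty))\big)^2$ rather than leaving an extra power of $s$ or $v$.
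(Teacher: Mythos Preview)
Your approach is essentially the paper's: the first-moment computation is identical (Many-to-One, bridge--endpoint independence, Gaussian tail), and for the second moment both you and the paper use Many-to-Two, condition on the MRCA, and apply the ballot estimate of Lemma~\ref{l:102.1} to the resulting segments.

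The one tactical difference is worth noting because it dissolves what you correctly flag as the main obstacle. The paper first observes (see \eqref{e:105.36b}) that $\bbE\wt\cE_s([-v,\infty)) \asymp \bbE\wt\cE_s([-v,-v+1])$, and then runs the entire second-moment computation on the \emph{unit} interval $A=[u,u+1]$ with $u=m_s-v$. With $a_i\in[0,1]$ the barrier in $J_s$ no longer depends (beyond $O(1)$) on the random endpoint, so the three segments decouple cleanly and your ``simultaneous'' integration over $(y,a_1,a_2)$ never arises. The paper also keeps one particle's trajectory intact on $[0,s]$ rather than splitting it at $\tau$, which directly extracts one factor of $\bbE\wt\cE_s([u,u+1]-m_s)$; the other descendant is then bounded by $\bbE\wt\cE_{s-\tau}([u_{s-\tau},u_{s-\tau}+1]-m_{s-\tau})$, and the integrable weight $(\tau\wedge(s-\tau)+1)^{-3/2}$ comes from averaging the ancestor-position factor against the bridge density at time $\tau$. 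Your direct three-segment integration over $(y,a_1,a_2)$ with unbounded $a_i$ should still go through, but the unit-interval reduction is the shortcut that makes the bookkeeping painless.
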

\begin{proof}
By Lemma~\ref{l:102.1} and standard tilting for BM, for any Borel set $A \subset \bbR$, 
\begin{equation}
\label{e:105.36a}
\begin{split}
	\bbP \big(W_s \in A\,,\,\,W_{[0,s]} \in J_s\big)
	& = \int_A \bbP(W_s \in \rmd w)
	\bbP \big(\max W_{[0,s]} \leq 1 \,\big|\, W_s = 0\big) \\
	& = \frac{2}{s} \bbP (W_s \in A) \,.
\end{split}
\end{equation}
Thus, together with the Gaussian tail formula and a similar calculation as in~\eqref{e:105.31} we get~\eqref{e:105.34}. Observe that by the Gaussian tail formula, we get the same lower bound (up to a smaller constant) also for the first moment of $\wt{\cE}_s([-v, -v+1])$, so that
\begin{equation}
\label{e:105.36b}
		\bbE \wt{\cE}_s([-v, \infty)) \asymp \bbE \wt{\cE}_s([-v, -v+1]) \gtrsim 1 \,.
\end{equation}

Turning to the second moment, by the Many-to-Two Lemma (c.f.~\cite{CHL19} for a similar computation), for any Borel set $A \subset \bbR$, the second moment of $\wt{\cE}_s(A - m_s)$ is equal to
\begin{equation}
\label{e:105.36}
\begin{split}
	\big(\bbE & \wt{\cE}_s(A-m_s)\big) + \\
 	 & 2 e^{s} \int_0^s e^{s-\tau} 
 	 	\bbP\Big(h_s(x), h_s(x') \in A,\, \big(h_r(x_r)\big)_{r \leq s}, \big(h_r(x'_r)\big)_{r \leq s} \in J_s \,\Big|\, 
 	 		x, x' \in L_s,\, |x \wedge x'| = \tau \Big)
 	 	\Big) \rmd \tau 
 \end{split}
\end{equation}
where we recall that $|x\wedge x'|$ stands for the splitting time of $x$ and $x'$. 

Next we specialize to $A = [u,u+1]$ for $u \in [s/10, \sqrt{2} s]$, and condition on $h_\tau(x_\tau) = h_\tau(x'_\tau)$ to upper bound the probability in the last integral by
\begin{equation}
\label{e:105.38}
	\bbE \Big(f_{\tau}\big(W_\tau- u_\tau \big) ;\; W_s \in [u,u+1]\,,\,\,W_{[0,s]} \in J_s\big) \,,
\end{equation}
where $u_r := \frac{r}{s} u$, and 
\begin{equation}
	f_{\tau}(z) := \bbP \Big(W_{s-\tau} \geq u_{s-\tau}\,,\,\,
			\max_{r \leq s-\tau} \big(W_r - u_r\big) \leq 2\big)\,\Big|\, 
			W_0 = z\Big) \,.
\end{equation}
Let us first consider the case $\tau \leq s-1$. Proceeding as in~\eqref{e:105.36a},
we write $f_\tau(z)$ as
\begin{equation}
\int_{w \geq 0}  \bbP \big(W_{s-\tau} - u_{s-\tau} \in \rmd w \,\big|\, W_0=z\big)\,
\bbP \big(\max W_{[0,s-\tau]} \leq 2 \,\big|\, W_0 = z, W_{s-\tau} = w\big) \,.
\end{equation}
The second term is at most $2\cdot (0-2)^- \cdot (z-2)^-/(s-\tau)$ by Lemma~\ref{l:102.1} as $w \in [0, 2]$, while the conditioning $W_0 = z$ in the first term may be changed to $W_0 = 0$ at the cost of a factor, which is at most $C\exp\big(-uz^-/s \big)$ by explicit computation, thanks to the conditions $z \leq 2$, $s-\tau \geq 1$, $w\in [0, 2]$, and the upper bound on $u$. In view of the lower bound on $u$, we thus get
\begin{equation}
	f_\tau(z) \leq C\rme^{cz^-}(z-2)^- \bbP \big(W_{s-\tau} \in  [u_{s-\tau},u_{s-\tau} + 1]\,,\,\,W_{[0,s-\tau]} \in J_{s-\tau}\big) \,.
\end{equation}
Plugging this in~\eqref{e:105.38} and proceeding as in~\eqref{e:105.36a} by conditioning both on $W_s$ and $W_{\tau}$, 
we get as an upper bound $\bbP \big(W_{s-\tau} \in  [u_{s-\tau},u_{s-\tau} + 1]\,,\,\,W_{[0,s-\tau]} \in J_{s-\tau}\big)$ times
\begin{multline}
\label{e:105.43a}
 C \bbP \big(W_s \in [u,u+1]\big) (\tau+1)^{-1} (s-\tau+1)^{-1} \bbE \big(\rme^{-cW_\tau^-} ((W_\tau^-)^3+1) \,\big|\, W_s = 0\big)  \\
\leq C (\tau \wedge (s-\tau) + 1)^{-3/2} \bbP \big(W_s \in [u,u+1]\,,\,\,W_{[0,s]} \in J_s\big) \,,
\end{multline}
where we have used that the expectation above is at most $C (\tau \wedge (s-\tau) + 1)^{-1/2}$ by direct computation. When $\tau \in [s,s-1]$, we can simply drop $f_\tau$ from the expectation in~\eqref{e:105.38} and upper bound the mean in~\eqref{e:105.38} just by the right hand side of~\eqref{e:105.43a}.

Using these bounds, the second term of~\eqref{e:105.36} is at most
\begin{equation}
\label{e:105.43}
 C \bbE \wt{\cE}_s([u,u+1]-m_s)\big)
 	\int_0^s (\tau \wedge (s-\tau) + 1)^{-3/2}
	 	\big(1+\bbE \wt{\cE}_{s-\tau} \big([u_{s-\tau}, u_{s-\tau} + 1] -m_{s-\tau}\big)\big) \rmd \tau
\end{equation}
Plugging in $u=m_s - v$, observing that $u_r \geq m_r - v_r - C$ and using the first moment upper bound, the integral is at most
\begin{equation}
	C \int_0^s (\tau \wedge (s-\tau) + 1)^{-3/2} \exp \Big((s-\tau)\Big(\sqrt{2}\frac{v}{s} - \frac{v^2}{2s^2}\Big)\Big) \rmd \tau
\leq  C' \bbE \wt{\cE}_s([-v,-v+1])\big) \,,
\end{equation}
where we have again used the bound on $v$. Together with the first mean in~\eqref{e:105.43} at $u = m_s - v$ and~\eqref{e:105.36b} this shows the desired claim.
\end{proof}

We can now give
\begin{proof}[Proof of Lemma~\ref{lower_bound_2}]
Thanks to Lemma~\ref{l:5.6} and the Paley-Zygmund inequality there exists $\delta > 0$ such that for all sufficiently large $s \geq 1$ and all $v \leq \frac{s}{1+\log s}$
\begin{equation}\label{e:Paley-Zygmund-lower-bound}
	\bbP\Big(\cE_s([-v, \infty)) \geq \delta e^{\sqrt{2} v - \frac{v^2}{2s}}  \Big) 
	\geq \bbP\Big(\wt{\cE}_s([-v, \infty)) \geq \delta e^{\sqrt{2} v - \frac{v^2}{2s}} \Big) 
	\geq \delta  \,.
\end{equation}
To get a high probability event, we condition on generation $r > 0$. Then, since the reproduction rate is such that $\bbE|L_r| = \rme^r$ and the motion is such that $m_r \leq \sqrt{2} r$, it follows by standard BBM facts (namely, the geometric distribution of $L_r$ and the tightness of centered \emph{minimum} of $h_r$) that the probability of the event
\begin{equation}
	\cA_r : = \Big\{|L_r| \geq \rme^{r/2} \,,\, \min_{L_r} h_r \geq -2r\Big\} 
\end{equation}
is at least $1 - C e^{-cr}$ for some constants $c, C > 0$. At the same time by the Markov property of BBM, for $\frac{s}{1 + \log s} \geq v \geq 2r$,
\begin{equation}
\bbP \Big(\cE_s([-v, \infty)) \geq z
\, \Big|\, \cA_r \Big) 
\geq 1 - \bbP \big(\cE_{s-r}([-v+2r,\, \infty)) < z\big)^{e^{r/2}} \, ,
\end{equation}
which is at least $1 - (1-\delta)^{e^{r/2}}$ by the estimate \eqref{e:Paley-Zygmund-lower-bound} if in the above we set $z := \delta e^{\sqrt{2} (v-2r) - \frac{(v-2r)^2}{2(s-r)}}$. The result now follows by observing $\frac{(v-2r)^2}{2(s-r)} \leq \frac{v^2}{2s}$ since $s \geq \frac{v}{2}$. 
\end{proof}

\subsection{Almost sure growth for the cluster level sets}
Lastly let us give
\begin{proof}[Proof of Proposition~\ref{p:2}]
In the following we let $\cV(\eps, K, \Lambda) := \cV^{(1)}(\eps, K, \Lambda)$. In light of Proposition \ref{p:nice_event_whp}, it suffices to show that for arbitrarily small $\eps > 0$ and large $K, \Lambda < \infty$, as $v \to \infty$, 
\begin{equation}
\frac{\log \wc{\cC}([-v, 0]) -\sqrt{2} v }{v} \to 0 \,,
\end{equation}
almost surely on $\cV(\eps, K, \Lambda)$. Because of the monotonicity of $\log \wc{\cC}([-v, 0]) $ in $v$, this will follow from a standard Borel-Cantelli argument (along integer $v$), once we have the following quantitative version of Proposition \ref{2nd_result_quantitative}:
\begin{equation}
	\bbP \bigg(\bigg|\frac{\log \wc{\cC}([-v, 0]) -\sqrt{2} v }{ v^{\frac{2}{3}} \log \log v}\bigg| > \epsilon \,\bigg|\, \wc{W}, \wc{\cN}, \wc{Y}  \bigg) < C e^{-cv^{\frac{1}{3}}} \,,
\end{equation}
for all large $v$, uniformly in $(\wc{W}, \wc{\cN}, \wc{Y} ) \in \cV(\eps, K, \Lambda)$. \medskip

Starting with the upper bound, as before we have the bound \eqref{e:105.18} for $s\leq v$ and for $s\geq v$, we use the estimate \eqref{eq_1st_moment_v} with Condition \ref{itm:2} to get
\begin{equation}
	\bbE \big[\cE_s([-v, \infty) - \wc{W}_s)\big] \leq s^C \exp \Big(\sqrt{2} v - \big(\sqrt{2} -  1\big)  s^{\frac{1}{2}-\eps} - \frac{v^2}{2s} \Big) \,,
\end{equation}
which is at most $v^C \exp \Big(\sqrt{2} v - \frac12 	v^{\frac{2}{3}} \Big)$ for $s \in [v, v^{\frac{4}{3}}]$ and at most $s^C \exp \Big(\sqrt{2} v - 0.4  s^{\frac{1}{2}-\eps} \Big)$ for $s\geq v^{\frac{4}{3}}$. Summing over all $s\in \wc{\cN}$ as in \eqref{e:upper_bound_final_integral_calculation}, we get the overall estimate
\begin{equation}
\ol{\bbE} \wc{\cC}\big([-v ,\infty)\big) \leq v^{C'} \exp \Big(\sqrt{2}v - v^{\frac{2}{3}-2\eps} \Big)
\end{equation}	and the desired upper bound now follows from Markov's inequality.\medskip

Turning to the lower bound, by Part 3 of Theorem \ref{p:1}, with probability at least $1 - C e^{-cv}$ there exists $\sigma_* \in \wc{\cN}$ satisfying $|\sigma_*-v^{\frac{4}{3}}|\leq v$ . As before, it follows from the tightness of the centered maximum and Lemma~\ref{lower_bound_2} that for $v$ large enough,
\begin{equation}\label{eq:lower_bound}
\wc{\cE}_{\sigma_*}([-v, \infty) - \wc{W}_{\sigma_*}) \geq \exp\Big(\sqrt{2} (v + \wc{W}_{\sigma_*}) - \frac{(v + \wc{W}_{\sigma_*})^2}{2s_*} - C v^{\frac{1}{3}} \Big) 
\end{equation}
occurs with probability at least $1 - Ce^{-cv^{\frac{1}{3}}}$. Using the estimate \eqref{eq_1st_moment_v} with Condition \ref{itm:2}, the right side is at least
\begin{equation}
	 \exp \Big(\sqrt{2} v - \sqrt{2}\cdot 3  (\sigma_* \log \log \sigma_*)^{\frac{1}{2}} - \frac{v^2}{2\sigma_*} - C v^{\frac{1}{3}}\Big) \geq \exp \Big(\sqrt{2} v - C' v^{\frac{2}{3}} \sqrt{\log \log v} \Big) \,,
\end{equation}
for $v$ large enough. Since this is also a lower bound on $\wc{\cC}([-v, 0])$ in view of~\eqref{eq:C_limit_r_to_infty}, the conclusion follows.
\end{proof}

\section{Strong representation for the cluster law}
\label{s:4}
In this section we prove Theorem~\ref{p:1}. The proof relies
on the weak representation of the cluster law from Subsection~\ref{s:2.1}, via the DRW $(\wh{W}, \cN, H)$ from~\eqref{e:102.27} conditioned on the Ballot event $\cA_t$ from~\eqref{e:102.7}.

The proof will use three auxiliary lemmas. The first lemma which shows that 
the law of $(\wh{W}, \cN)$, restricted to any finite time interval, tends to a limit in Total Variation as $t \to \infty$.
\begin{lem}\label{lem:TV_backbone_timestamps} 
There exists a joint process $(\wc{W}, \wc{\cN}) \in C([0, \infty)) \times \bbM_p([0, \infty))$ such that for every fixed $r > 0$, 
	\begin{equation}
	\label{e:104.1}
			\Big\|\bbP_{0,0}^{t,0}\big((\wh{W}_{t, [0,r]}, \cN_{[0,r]}) \in \cdot \,\big|\, \cA_t \big) - 
			\bbP\big((\wc{W}_{[0, r]}, \wc{\cN}_{[0, r]}) \in \cdot \big) \Big\|_{\rm TV}
			\underset{t \to \infty}\longrightarrow 0 \,.
	\end{equation}
\end{lem}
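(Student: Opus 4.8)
The plan is to prove the convergence for each fixed $r>0$ separately, by identifying the conditioned law (restricted to time $[0,r]$) as an explicit density against a fixed reference measure, showing this density converges in $L^1$, and then assembling the finite-$r$ limits by a Kolmogorov extension argument.

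First I would fix $r>0$, let $\bbP$ denote the law of the decorated random-walk-like process $(\wh W,\cN,H)$ with $W$ a standard Brownian motion from $0$ with free right endpoint, and set $\cG_r:=\sigma\big(W_{[0,r]},\cN_{[0,r]},(h^{\sigma})_{\sigma\in\cN,\,\sigma\le r}\big)$. Since $\cA_t(0,r)$ from~\eqref{e:2.5} is $\cG_r$-measurable and $\cA_t=\cA_t(0,r)\cap\cA_t(r,t)$ (up to a $\bbP$-null set), conditioning on $\cG_r$ and on $\{W_t=0\}$ makes $(W_s)_{s\in[r,t]}$ a Brownian bridge from $W_r$ to $0$, while the atoms of $\cN$ in $(r,t]$ and their attached decorations are fresh and independent of $\cG_r$. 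A Bayes/Markov computation as in the proof of Lemma~\ref{l:2.2} (in which the prefactor $p_t(0,0)$ cancels) then gives, on $\cG_r$,
\begin{equation}
\frac{\rmd\,\bbP_{0,0}^{t,0}\big(\cdot\,\big|\,\cA_t\big)}{\rmd\,\bbP}\bigg|_{\cG_r}
\;=\;D_t\;:=\;\frac{1_{\cA_t(0,r)}\,p_{t-r}(W_r,0)\,\psi_t(W_r)}{\bbE\big[1_{\cA_t(0,r)}\,p_{t-r}(W_r,0)\,\psi_t(W_r)\big]}\,,
\end{equation}
where $p$ is the Brownian transition kernel and $\psi_t(x):=\bbP\big(\cA_t(r,t)\,\big|\,W_r=x,\,W_t=0\big)$. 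Since $\gamma_{t,t}=0$ we may rewrite $\psi_t(x)=\bbP\big(\cA_t(r,t)\,\big|\,\wh W_{t,r}=x-\gamma_{t,r},\,\wh W_{t,t}=0\big)$, so Lemma~\ref{lem:15} applies to $\psi_t$ directly.

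Next I would let $t\to\infty$ and control the numerator of $D_t$ pointwise. Writing $\gamma_{\infty,s}:=\tfrac{3}{2\sqrt2}\log^+ s$, one has $\gamma_{t,\cdot}\to\gamma_{\infty,\cdot}$ uniformly on $[0,r]$; as $\cN\cap(0,r]$ is a.s.\ finite, the finitely many terms $\wh W_{t,\sigma_k}+\wh h^{\sigma_k*}_{\sigma_k}$ with $\sigma_k\le r$ converge, hence $1_{\cA_t(0,r)}\to 1_{\cA_\infty(0,r)}$ a.s., where $\cA_\infty(0,r)$ is the analogue of~\eqref{e:2.5} on $[0,r]$ with $\gamma_\infty$ in place of $\gamma_{t,\cdot}$; here I use that, conditionally on $\cN$ and the decorations, each $W_{\sigma_k}$ has a density, so the finite maximum $\max_{\sigma_k\le r}\big(W_{\sigma_k}-\gamma_{\infty,\sigma_k}+\wh h^{\sigma_k*}_{\sigma_k}\big)$ has no atom at $0$. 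Moreover $\sqrt t\,p_{t-r}(W_r,0)\to(2\pi)^{-1/2}$, and by~\eqref{e:53}--\eqref{e:54} of Lemma~\ref{lem:15}, $t\,\psi_t(x)\to 2\,f^{(r)}(x-\gamma_{\infty,r})\,g(0)$ at every continuity point of $f^{(r)}$, in particular $\bbP$-a.s.\ at $x=W_r$. Consequently $\widetilde D_t:=1_{\cA_t(0,r)}\,t^{3/2}\,p_{t-r}(W_r,0)\,\psi_t(W_r)$ converges $\bbP$-a.s.\ to $\widetilde D_\infty:=\tfrac{2g(0)}{\sqrt{2\pi}}\,1_{\cA_\infty(0,r)}\,f^{(r)}(W_r-\gamma_{\infty,r})$. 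To upgrade this to $L^1(\bbP)$ convergence I would dominate $\widetilde D_t$ uniformly in $t\ge 2r$: one has $p_{t-r}(W_r,0)\le(\pi t)^{-1/2}$, while the upper bound~\eqref{e:52} with endpoint $w=0$ gives $\psi_t(x)\le C\big((x-\gamma_{t,r})^-+1\big)/(t-r)\le C'(x^-+1)/t$, so that $\widetilde D_t\le C''(W_r^-+1)$, which is $\bbP$-integrable since $W_r$ is Gaussian. Dominated convergence then yields $\widetilde D_t\to\widetilde D_\infty$ in $L^1(\bbP)$ and $\bbE\widetilde D_t\to\bbE\widetilde D_\infty\in(0,\infty)$ (finite by the domination, strictly positive since $f^{(r)}>0$ and $\bbP(\cA_\infty(0,r))\ge\bbP(\cN\cap[0,r]=\emptyset)=e^{-2r}$), hence $D_t=\widetilde D_t/\bbE\widetilde D_t\to D_\infty:=\widetilde D_\infty/\bbE\widetilde D_\infty$ in $L^1(\bbP)$. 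This gives $\big\|\bbP_{0,0}^{t,0}(\cdot\,|\,\cA_t)-D_\infty\bbP\big\|_{\rm TV}\to0$ on $\cG_r$, and in particular the law of $(W_{[0,r]},\cN_{[0,r]})$ under $\bbP_{0,0}^{t,0}(\cdot\,|\,\cA_t)$ converges in total variation to a limit $\mu_\infty$, the relevant marginal of $D_\infty\bbP$.

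Finally I would pass from $W_{[0,r]}$ to $\wh W_{t,[0,r]}=W_{[0,r]}-\gamma_{\infty,\cdot}+\eta_t$, where $\eta_t(s):=\gamma_{\infty,s}-\gamma_{t,s}=\tfrac{3}{2\sqrt2}\tfrac{\log^+ t}{t}\,s$ is a deterministic path whose Cameron--Martin norm on $[0,r]$ tends to $0$. Since $D_\infty\bbP\ll\bbP$, under which $W$ is Wiener measure, translating the $W$-coordinate by $\eta_t$ changes the law by a total-variation amount tending to $0$ (the Cameron--Martin/Girsanov density converges to $1$ in $L^1$, and translation is continuous along Cameron--Martin directions), so the law of $(\wh W_{t,[0,r]},\cN_{[0,r]})$ under $\bbP_{0,0}^{t,0}(\cdot\,|\,\cA_t)$ also converges in total variation, to the image $\varrho_r$ of $\mu_\infty$ under subtracting $\gamma_{\infty,\cdot}$ from the first coordinate. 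The family $(\varrho_r)_{r>0}$ is consistent, because total-variation convergence is preserved under restriction to the nested $\sigma$-algebras $\cG_{r'}\subset\cG_r$ ($r'<r$); hence by Kolmogorov extension on $C([0,\infty))\times\bbM_p([0,\infty))$ there is a process $(\wc W,\wc\cN)$ with $(\wc W_{[0,r]},\wc\cN_{[0,r]})\sim\varrho_r$ for every $r$, which is exactly~\eqref{e:104.1}. I expect the two main obstacles to be the quantitative inputs from Lemma~\ref{lem:15}: the uniform-in-$t$ domination of $\widetilde D_t$, which works precisely because the Ballot upper bound~\eqref{e:52} is linear in $v^-$ once the right endpoint equals $0$, and the pointwise asymptotics $t\,\psi_t\to 2f^{(r)}g(0)$, i.e.\ the delicate statements~\eqref{e:53}--\eqref{e:54}. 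The remaining ingredients --- the atomlessness behind the a.s.\ convergence of $1_{\cA_t(0,r)}$, the Cameron--Martin correction absorbing $\gamma_{t,\cdot}\to\gamma_{\infty,\cdot}$, and the Kolmogorov extension --- are routine.
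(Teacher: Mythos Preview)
Your proof is correct and follows essentially the same route as the paper: compute the Radon--Nikodym density of the conditioned law on $[0,r]$ via Bayes/Markov, use Lemma~\ref{lem:15} for both the pointwise limit and the uniform domination, apply dominated convergence to get $L^1$ and hence TV convergence, and finish by Kolmogorov consistency. The only organizational difference is that the paper takes $\ol W_s:=W_s-\tfrac{3}{2\sqrt2}\log^+ s$ as the reference variable from the outset (absorbing the Cameron--Martin correction into an explicit factor $g^{(3)}_{t,r}$ of the density), which makes the argument of $f^{(r)}$ exactly $w_r$ and thereby avoids both your final translation step and your continuity-of-$f^{(r)}$ argument via monotonicity.
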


The second lemma shows that the law of $\wh{W}$ under $\cA_t$ tends to be that of Bessel-3 away from origin and after taking first $t \to \infty$. Again, this convergence is in Total Variation.
\begin{lem}
\label{l:4.2}
Let $(Y_s)_{s \geq 0}$ a Bessel-3 and $y_0 \geq 0$. Set $\wc{Y}_s := -Y_s - \frac{3}{2\sqrt{2}} \log^+s$. Then,
\begin{equation}\label{e:2.13}
\lim_{r \to \infty}
\sup_{R \geq r}
\limsup_{t \to \infty}
\Big\|\bbP_{0,0}^{t, 0} \big(\wh{W}_{t,[r,R]} \in \cdot \,\big|\cA_t \big) - 
	\bbP \big(\wc{Y}_{[r,R]} \in \cdot \,\Big|\, \wc{Y}_0 = y_0 \big)\Big\|_{\rm TV} = 0 \,.
\end{equation}
\end{lem}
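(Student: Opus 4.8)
The plan is to reduce Lemma~\ref{l:4.2} to the finite-time Ballot estimates of Lemma~\ref{lem:15} together with the strong (total-variation) Brownian-to-Bessel convergence of Lemma~\ref{l:2.2}, via a two-stage argument: first fix the endpoint times and run $t\to\infty$, then let the left endpoint $r\to\infty$. The guiding intuition is exactly the one recorded after Lemma~\ref{l:2.2}: once we condition on the value of $\wh W_{t,r}$ (and, if needed, on the decorations before time $r$), the remaining conditioning that defines $\cA_t$ is essentially a one-sided Ballot restriction on a Brownian motion plus a vanishing drift curve $\gamma_{t,\cdot}$, and by Lemma~\ref{l:2.2} this converges in TV to a Bessel-3 law.

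First I would condition on $\big(\wh W_{t,[0,r]},\cN_{[0,r]},H_{[0,r]}\big)$ and decompose $\cA_t=\cA_t(0,r)\cap\cA_t(r,t)$. On $\cA_t(r,t)$, write $w:=\wh W_{t,r}$; using Lemma~\ref{l:2.3} (the identity $\wt\bbP(\cdot)=\bbP(\cA_t(r,t)\mid\cdots)$ form), the conditional law of $\wh W_{t,[r,R]}$ given everything up to $r$ and given $\cA_t$ depends on the past only through $w$ and the value $\wh W_{t,t}=0$, and is that of a Brownian bridge from $w$ to $0$ over $[r,t]$, tilted by the drift $-\gamma_{t,\cdot}$, conditioned that $W_s+\wh h^{\sigma_k*}_{\sigma_k}$ stays negative at the post-$r$ atoms of $\cN$. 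I would then separate the decoration contribution: since $\wh h^{\sigma_k*}_{\sigma_k}\le 0$ trivially but has good upper tails, the conditioning event sits between $\{\min_{[r,t]}W\ge 0\text{ at }\cN\text{-times}\}$ and a slightly perturbed version, and one can sandwich using the DRW Ballot asymptotics~\eqref{e:53} (which already quantify the limiting density as $2 f^{(r)}(v)g(0)/(t-r)$). Taking $t\to\infty$ with $r,R$ fixed, Lemma~\ref{l:2.2} (applied with $d_t$ the slope of $\gamma_{t,\cdot}$ restricted to $[r,R]$, which indeed $\to 0$) gives that the law of $\wh W_{t,[r,R]}$, conditioned on $\{\wh W_{t,r}=w\}\cap\cA_t$, converges in TV to the Bessel-3 law on $[r,R]$ started from $Y_r=-w$; integrating out the (converging, by Lemma~\ref{lem:15} and~\eqref{e:53.5}) limiting law of $-\wh W_{t,r}$ yields a limit which is a Bessel-3 law on $[r,\infty)$ with some random starting value at time $r$, plus the deterministic shift $-\tfrac{3}{2\sqrt2}\log^+s$ coming from the difference between $\gamma_{t,s}$ and the full log-correction.

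Second, with $\limsup_{t\to\infty}$ already producing a bona fide Bessel-3 law $\bbP(\wc Y_{[r,R]}\in\cdot\mid \wc Y_r = \text{(random)})$, I would let $r\to\infty$ and invoke Lemma~\ref{l:Bessel_TV}: a Bessel-3 process forgets its initial space/time data in TV over $[r,\infty)$, uniformly over bounded starting points, so the dependence on the random value at time $r$ (which is tight, hence bounded off a set of small probability, by the repulsion estimate Lemma~\ref{l:10.7.0} and Lemma~\ref{lem:15}) washes out, and the limit becomes $\bbP(\wc Y_{[r,R]}\in\cdot\mid\wc Y_0=y_0)$ for the fixed $y_0$ in the statement — note the choice of $y_0$ is irrelevant in the limit, again by Lemma~\ref{l:Bessel_TV}, which is why the lemma can assert it for any $y_0\ge 0$. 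The $\sup_{R\ge r}$ is harmless because TV distance over $[r,R]$ is monotone in $R$ and the $r\to\infty$ bound I produce will be uniform in $R$ (the Bessel-forgetting estimate of Lemma~\ref{l:Bessel_TV} is over the whole half-line $[r,\infty)$).

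The main obstacle is the first stage: rigorously showing that the DRW Ballot conditioning, which involves the decorations $H$ through the maxima $\wh h^{\sigma_k*}_{\sigma_k}$ at Poissonian times, collapses in TV to a clean Brownian positivity conditioning to which Lemma~\ref{l:2.2} directly applies. The cleanest route is probably to use Lemma~\ref{l:2.13}: it states precisely that over the middle block $[s,t-s]$ one may replace the full event $\cA_t$ by $\wt\cA_t(s,t-s)$, which on $[s,t-s]$ is just $\{\max W_{[s,t-s]}\le 0\}$ (no decorations), with an error that is $o(1/t)$ relative to $\bbP(\cA_t)\asymp 1/t$; applying this with, say, $s=r$ reduces the conditioning on $[r,R]$ (for any $R\le t-r$, and the regime $R>t-r$ is vacuous in the $t\to\infty$ limit) to exactly the setting of Lemma~\ref{l:2.2}, after which the TV-convergence machinery runs as above. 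I would also need to check that the decoration part of $\cA_t(0,r)$ and the value of $\wh W_{t,r}$ jointly converge in law (this is covered by Lemma~\ref{lem:TV_backbone_timestamps}), so that integrating over the past is legitimate.
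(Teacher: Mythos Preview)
Your overall strategy --- replace $\cA_t$ by $\wt\cA_t(s,t-s)$ via Lemma~\ref{l:2.13}, then appeal to Lemma~\ref{l:2.2} for the TV-convergence to Bessel-3, and finally invoke Lemma~\ref{l:Bessel_TV} to erase the dependence on the starting data --- is exactly the paper's route. There is, however, a genuine problem with your suggested choice $s=r$.

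With $s=r$, after conditioning on $\wh W_{t,r}$ and sending $t\to\infty$, you obtain (in TV) a Bessel-3 process \emph{started at time $r$} from a random point with some limiting law $\nu_r$. The target is a Bessel-3 started at time $0$ from $y_0$, restricted to $[r,R]$; equivalently, a Bessel-3 started at time $r$ from a point with law $\mu_r:=\bbP(Y_r\in\cdot\mid Y_0=y_0)$. Lemma~\ref{l:Bessel_TV} does not close this gap: it says that a Bessel-3 looked at from time $r$ onward forgets its initial condition \emph{at time $0$}, but here both processes are being observed immediately from their starting time $r$, so the mixing law at time $r$ is fully visible. Making the two close would require $\nu_r\approx\mu_r$ in TV, which is essentially the marginal-at-time-$r$ version of the very statement you are proving.

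The paper's remedy is to introduce a separate scale $s<r$: apply Lemma~\ref{l:2.13} with this $s$, condition on $(\wh W_{t,s},\wh W_{t,t-s})$, and use Lemma~\ref{l:2.2} to obtain a Bessel-3 started at time $s$. Now the interval $[s,r]$ provides the ``room'' Lemma~\ref{l:Bessel_TV} needs: with $s$ fixed and $r\to\infty$, the Bessel-3 forgets its value at time $s$, uniformly over bounded starting points, and one lands on $\bbP(\wc Y_{[r,R]}\in\cdot\mid Y_0=y_0)$. The integration over the endpoint values is then carried out not via Lemma~\ref{lem:TV_backbone_timestamps} (which is not used here), but by Dominated Convergence against an explicit density bound on $(\wh W_{t,s},\wh W_{t,t-s})$ under $\wt\cA_t(s,t-s)$, obtained from Lemmas~\ref{l:102.1},~\ref{lem:15} and~\ref{l:2.13}.
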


The third lemma shows that the law of $\cN$ does not change too much under $\cA_t$.
\begin{lem}
	\label{l:3.2a}
	There exists $C < \infty$ such that for any $s,r \geq 0$ and non-negative measurable test function $\varphi : \bbM_p([s,s+r]) \to \bbR_+$, 
	\begin{equation}
		\label{e:103.6}
		\limsup_{t \to \infty} \bbE_{0,0}^{t,0} \big(\varphi(\cN_{[s,s+r]}) \,\big|\, \cA_t \big) \leq C\Big(\sqrt{\frac{r}{s\vee 1}} + 1\Big) \, \bbE \big(\varphi(\cN_{[s,s+r]})\big)
	\end{equation}
\end{lem}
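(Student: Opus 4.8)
The plan is to bound $\cA_t$ from above by a product of Ballot events that leaves the window $[s,s+r]$ unconstrained, and to exploit that, in the DRW $(\wh W,\cN,H)$, the restriction $\cN_{[s,s+r]}$ is independent of everything else.

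\emph{Step 1: reduction to a survival probability.} We may assume $\bbE\varphi(\cN_{[s,s+r]})<\infty$ and $r>0$, and since only large $t$ matters for the $\limsup$, that $t>s+r$. Because a.s.\ no atom of $\cN$ lies exactly at $s$ or $s+r$, we have $\cA_t=\cA_t(0,s)\cap\cA_t(s,s+r)\cap\cA_t(s+r,t)$, hence $\1_{\cA_t}\le\1_{\cA_t(0,s)\cap\cA_t(s+r,t)}$. The event on the right is a function of $\wh W$, of $\cN_{[0,s]\cup[s+r,t]}$, and of the decorations attached to atoms in $[0,s]\cup[s+r,t]$ only; since $\cN$ is independent of $(\wh W,H)$ with independent restrictions to disjoint intervals, $\cN_{[s,s+r]}$ is independent of this event, and its law is unchanged under the conditioning $\wh W_{t,0}=\wh W_{t,t}=0$. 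Therefore
\begin{equation*}
\bbE_{0,0}^{t,0}\big(\varphi(\cN_{[s,s+r]})\1_{\cA_t}\big)\le\bbE\big(\varphi(\cN_{[s,s+r]})\big)\cdot\bbP_{0,0}^{t,0}\big(\cA_t(0,s)\cap\cA_t(s+r,t)\big),
\end{equation*}
so dividing by $\bbP_{0,0}^{t,0}(\cA_t)$ reduces the lemma to the estimate $\limsup_{t\to\infty}\bbP_{0,0}^{t,0}(\cA_t(0,s)\cap\cA_t(s+r,t))/\bbP_{0,0}^{t,0}(\cA_t)\le C(\sqrt{r/(s\vee1)}+1)$.

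\emph{Step 2: the two Ballot probabilities.} Since $\gamma_{t,0}=\gamma_{t,t}=0$, under $\bbP_{0,0}^{t,0}$ the path $W$ is a standard Brownian bridge from $0$ to $0$ on $[0,t]$, independent of $(\cN,H)$. The denominator is $\gtrsim 1/t$ by \eqref{e:53} with $r=0$. For the numerator, I would condition on $(\wh W_{t,s},\wh W_{t,s+r})$; by the Markov property of the bridge and the independence of $\cN,H$ and of the decorations across atoms, $\cA_t(0,s)$ and $\cA_t(s+r,t)$ are then conditionally independent with conditional probabilities bounded as in \eqref{e:52}, namely $\bbP(\cA_t(0,s)\,|\,\wh W_{t,0}=0,\wh W_{t,s}=v)\le\min(1,C(v^-+1)/s)\le C(v^-+1)/(s\vee1)$ and $\bbP(\cA_t(s+r,t)\,|\,\wh W_{t,s+r}=w,\wh W_{t,t}=0)\le C(w^-+1)/(t-s-r)$. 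This gives, for $t$ large,
\begin{equation*}
\bbP_{0,0}^{t,0}\big(\cA_t(0,s)\cap\cA_t(s+r,t)\big)\le\frac{C}{(s\vee1)(t-s-r)}\,\bbE_{0,0}^{t,0}\big[(\wh W_{t,s}^-+1)(\wh W_{t,s+r}^-+1)\big].
\end{equation*}

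\emph{Step 3: the moment estimate.} Using $|\gamma_{t,u}|=\tfrac{3}{2\sqrt2}\big|\log^+u-\tfrac{u}{t}\log^+t\big|\le C(1+\log^+u)$ for $0\le u\le t$ (a consequence of Lemma~\ref{l:lisa}) one has $\wh W_{t,u}^-+1\le|W_u|+C(1+\log^+u)$. Combining this with the bridge bounds $\bbE_{0,0}^{t,0}W_u^2=u(t-u)/t\le u$ and $\bbE_{0,0}^{t,0}|W_sW_{s+r}|\le\sqrt{s(s+r)}\le s+\sqrt{sr}$ (Cauchy--Schwarz), expanding the product, and absorbing the logarithmic corrections via $\log^+u\lesssim\sqrt{u\vee1}$ and $1+\log^+(s+r)\lesssim(1+\log^+s)+\log^+r$, a short computation yields, uniformly in $t$,
\begin{equation*}
\bbE_{0,0}^{t,0}\big[(\wh W_{t,s}^-+1)(\wh W_{t,s+r}^-+1)\big]\le C\big((s\vee1)+\sqrt{(s\vee1)\,r}\big)=C(s\vee1)\Big(1+\sqrt{\tfrac{r}{s\vee1}}\Big).
\end{equation*}
Plugging this into Step 2 and letting $t\to\infty$ (so $t-s-r\sim t$), together with $\bbP_{0,0}^{t,0}(\cA_t)\gtrsim 1/t$, gives the ratio bound from Step 1, and hence the lemma. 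The case $s=0$ is handled by reading $\cA_t(0,0)$ as the sure event; $r=0$ is trivial.

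\emph{Main obstacle.} Steps~1--2 are structural and routine; the real work is the moment estimate of Step~3, where the deterministic logarithmic drift $\gamma_{t,\cdot}$ must be carried through the computation, the dependence has to come out as $s\vee1$ rather than $s$, and uniformity in $t$ must be verified before passing to the limit.
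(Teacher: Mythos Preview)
Your proof is correct and follows essentially the same route as the paper: drop the constraint on $[s,s+r]$ to factor out $\varphi(\cN_{[s,s+r]})$ by independence, condition on $(\wh W_{t,s},\wh W_{t,s+r})$ and apply the DRW Ballot upper bound \eqref{e:52} to each piece, and finish with a moment estimate for $(\wh W_{t,s}^-+1)(\wh W_{t,s+r}^-+1)$ combined with the lower bound $\bbP_{0,0}^{t,0}(\cA_t)\gtrsim 1/t$. The only cosmetic difference is in Step~3: the paper uses the increment inequality $\wh W_{t,s+r}^-\le \wh W_{t,s}^-+(\wh W_{t,s+r}-\wh W_{t,s})^-$ and then passes to the limit $t\to\infty$ (where the bridge pair becomes free BM) before computing the moments, whereas you bound the bridge moments directly and uniformly in $t$ via $\bbE_{0,0}^{t,0}W_u^2\le u$ and Cauchy--Schwarz, which is arguably cleaner.
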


The proofs of these auxiliary lemmas will be given in Subsection \ref{s:cluster_law_rep_aux_lem}, before which we can give:
\begin{proof}[Proof of Theorem~\ref{p:1}] 
It is standard that one can define a probability space which includes both the process $(\wc{W}, \wc{\cN})$ from Lemma~\ref{lem:TV_backbone_timestamps} and a process $\wc{\cE} = (\wc{\cE}_{s,y})_{s,y}$ which is independent of $(\wc{W}, \wc{\cN})$ and whose marginals are independent and obey~\eqref{e:101.12}. We wish to show that the process $(\wc{W}, \wc{\cN}, \wc{\cE})$ satisfies all the requirements in the theorem. To avoid confusion, let us denote the probability measure on this space by $\wc{\bbP}$ and expectation by $\wc{\bbE}$.
		
		By the independence assumptions, we clearly have
		\begin{equation}
			\label{e:4.28}
			\bbE \bigg( \varphi \Big(\int_0^r \! \wc{\cE}_{s, \wc{W}_s}\wc{\cN}(\rmd s)\Big)\, \bigg|\,\, \wc{W}, \wc{\cN}\bigg) = 
			\wc{\bbE} \Phi_{r}(\wc{W}_{[0,r]}, \wc{\cN}_{[0,r]})\,,
		\end{equation}
		where
	$\Phi_r:\: C([0,r]) \times \bbM_p([0,r]) \to \bbR$ is defined via
		\begin{equation}
			\Phi_r(w, \eta) := \wc{\bbE} \, \varphi \Big(\int_0^r \wc{\cE}_{s, w_s} \eta (\rmd s)\Big) \,,
		\end{equation}
	for a (bounded measurable) test function $\varphi: \bbM_p((-\infty, 0]) \to \bbR$. 
		
		At the same time, recalling the DRW from Subsection~\ref{s:2.1.2}, we also let $\ol{\bbP}_t$ be the conditional measure $\bbP_{0,0}^{t,0}(-|\cA_t)$, with $\ol{\bbE}_t$ the associated expectation. Then, by independence of the components of the DRW and the independence of the BBMs in the component $H$ thereof, we also clearly have
		\begin{equation}
			\label{e:4.26}
			\ol{\bbE}_t \bigg( \varphi \Big(\int_0^r \! \cE_{s}^{s} \big(\cdot - \wh{W}_{t,s} \big)\cN(\rmd s)\Big)\, \bigg|\,\, \wh{W}, \cN\bigg) = 
			\ol{\bbE}_t \Phi_{r}(\wh{W}_{t,[0,r]}, \cN_{[0,r]})\,,
		\end{equation}
		with $\Phi_r$ as above.

		Now, if $\varphi$ is bounded and continuous (w.r.t.~the vague topology on $\bbM_p((-\infty, 0])$), then 
		by Lemma~\ref{l:7.0} and Lemma~\ref{l:5.2} the left hand side
		of~\eqref{e:4.26} converges to $\int \varphi(\omega) \nu_r(\rmd \omega)$, where $\nu_r$ is as in Lemma~\ref{l:7.0}, while by Lemma~\ref{lem:TV_backbone_timestamps}, the right hand side of~\eqref{e:4.26} tends to the left hand side of~\eqref{e:4.28}. This shows that 
		\begin{equation}
			\wc{\cC}_r \sim \nu_r \,.
		\end{equation}
		
By weak convergence of $\nu_r$ to $\nu$ as given in Lemma~\ref{l:7.0}, for any $v > 0$, the collection $(\wc{\cC}_r([-v, 0]))_{r \geq 0}$ is tight. It therefore follows, by monotoncity again, that the limit
\begin{equation}
	\wc{\cC} = \lim_{r \to \infty} \wc{\cC}_r 
\end{equation}
exists almost surely as an element of $\bbM((-\infty, 0])$, both point-wise (i.e. when both sides are evaluated on any Borel subset of $(-\infty,0]$) and therefore also vaguely. Thanks to Lemma~\ref{l:7.0} again we must also have $\wc{\cC} \sim \nu$.
 This establishes the statement of Theorem \ref{p:1} up to and including Part~1.
		
For Part~2, it follows from Lemma~\ref{lem:TV_backbone_timestamps} and Lemma~\ref{l:4.2} that
		\begin{equation}
			\label{e:4.32}
			\lim_{r \to \infty}
			\sup_{R \geq r}
			\Big\|\wc{\bbP} \big(\wc{W}_{[r,R]} \in \cdot \big) - 
			\bbP \big(\wc{Y}_{[r,R]} \in \cdot \,\Big|\, \wc{Y}_0 = y_0 \big)\Big\|_{\rm TV} = 0 \,.
		\end{equation}
	Now, by standard measure theoretic arguments, we can approximate any Borel subset of $\cC([r, \infty])$ arbitrarily well by 
	a Borel subset of $\cC([r, R])$ under 
	the law $\frac12 \wc{\bbP} \big(\wc{W}_{[r,\infty)} \in \cdot \big) + \frac12 \bbP \big(\wc{Y}_{[r,\infty)} \in \cdot \,\Big|\, \wc{Y}_0 = y_0 \big)$ as long as $R$ is chosen large enough. This implies that the same Borel subset of $\cC([r, R])$ can be chosen for the approximation under each of these laws separately. Together with the uniform (in $R$) convergence in~\eqref{e:4.32}. This shows~\eqref{e:2.3}.
	
	Lastly, for Part~3, thanks to Lemma~\ref{l:3.2a} and the i.i.d. $\text{Exp}(2)$ law of the distance between succeeding atoms of $\cN$, we have
	\begin{equation}
		\limsup_{t \to \infty} 	\bbP_{0,0}^{t,0} \big(\cN\big([s-u/2,s]\big) = 0 \,\big|\, \cA_t \big) \leq C(\sqrt{u/2}+1) \rme^{-u} \leq C \rme^{-cu} 
	\end{equation}
	with a similar bound holding for the limit-superior as $t \to \infty$ of $\bbP_{0,0}^{t,0} \big(\cN\big([s,s+u/2]\big) = 0 \,\big|\, \cA_t \big)$. 
	Thanks to Lemma~\ref{lem:TV_backbone_timestamps} and the Union Bound, we thus get the first inequality in~\eqref{e:101.14}.
	The second inequality in~\eqref{e:101.14} follows in the same way thanks to the exponential decay of $\bbP(\cN\big([0,s]\big) > (2+\epsilon) s\big)$ in $s$ for any $\epsilon > 0$, Lemma~\ref{l:3.2a} and Lemma~\ref{lem:TV_backbone_timestamps}.
\end{proof}

\subsection{Proofs of auxiliary lemmas}\label{s:cluster_law_rep_aux_lem}

Let us first give the proof of Lemma \ref{lem:TV_backbone_timestamps}. This is similar to the proof of Lemma \ref{l:2.2}, replacing the Ballot estimate for Brownian motion with that for the DRW. 

\begin{proof}[Proof of Lemma \ref{lem:TV_backbone_timestamps}]
Define $\ol{W}_{s} := W_s - \frac{3}{2\sqrt{2}} \log^+ s$. Note that
\begin{equation}
\wh{W}_{t,s} = W_s - \gamma_{t, s} = \ol{W}_s + d_t s
\end{equation}
where $d_t := \frac{3}{2\sqrt{2}} \frac{\log^+ t}{t}$. 
Using Bayes' rule, the Markov property, and the Cameron-Martin formula, we have
\begin{equation}
\begin{split}
\label{e:2.8}
\bbP_{0, 0}^{t, 0} \left(\wh{W}_{t, [0,r]} \in \rmd w, \cN_{[0, r]} \in \rmd \eta \, \Big| \cA_t \right) &= \bbP_{0, 0} \left(\wh{W}_{t, [0,r]} \in \rmd w, \cN_{[0, r]} \in \rmd \eta\right) g^{(1)}(w, \eta)\,  g^{(2)}_{t, r}(w) \\
 &= \bbP_{0, 0} \left(\ol{W}_{[0,r]} \in \rmd w, \cN_{[0, r]} \in \rmd \eta\right) 
 g^{(1)}(w, \eta)\,g^{(2)}_{t, r}(w)\,  g^{(3)}_{t,r}(w)
\end{split}
\end{equation}
with
\begin{equation}
g^{(1)}(w, \eta) := \bbP \Big(\max_{s\in \eta: \, s \in[0, r]} w_s + \wh{h}^{(s) \, *}_s \leq 0 \Big) \,,
\end{equation}
\begin{equation}
g^{(2)}_{t, r}(w) := \frac{p_{t-r}(w_r+ \gamma_{t, r}, 0)}{p_t(0, 0)} \cdot \frac{\bbP^{t, 0}_{r, w_r}(\cA_{t}(r,t))}{\bbP^{t, 0}_{0, 0}(\cA_t)} \,,
\end{equation}
where $p_t(x, y)$ the transition density of BM, and
\begin{equation}
g^{(3)}_{t,r}(w) := 
	 \exp \Big(d_t \Big(w_r + \frac{3}{2\sqrt{2}} \log^+ r\Big) - \frac{1}{2} d_t^2 r\Big) \,. 
\end{equation}

Now, observe that for $w$ and $r$ fixed, $g^{(3)}_{t,r}(w)$ converges to $1$ as $t \to \infty$ and bounded by $2 e^{|w_r|}$ for large $t$, because $d_t \to 0$ as $t\to \infty$.
At the same time, by Lemma~\ref{lem:15}, we have that as $t \to \infty$,
\begin{equation}
\frac{\bbP^{t, 0}_{r, z}(\cA_{t}(r, t))}{\bbP^{t, 0}_{0, 0}(\cA_t)} = \frac{f^{(r)}(z)}{f^{(0)}(0)} (1+o(1)) \,,
\end{equation}
and is uniformly bounded by $O(1) (z^- + 1)$.
Lastly, by direct computation, we have that 
\begin{align}
	\frac{p_{t-r}(z + \gamma_{t, r}, 0)}{p_t(0, 0)} = \frac{1}{\sqrt{1 - \frac{r}{t}}} \exp\left(-\frac{(z + \gamma_{t,r})^2}{2(t-r)}\right) = 1+o(1)  \,,
\end{align}
as $t\to \infty$, and is uniformly bounded. 

Combining these limits, we get 
\begin{equation}\label{eq:BM_conditioned_2}
g^{(1)}_{t,r}(w,\eta) \times g^{(2)}_{t, r}(w) \times g^{(3)}_{t,r}(w) \underset{t \to \infty}\longrightarrow h_r(w,\eta) := 
g^{(1)}_{t,r}(w,\eta) \frac{f^{(r)}(w_r)}{f^{(0)}(0)}
\end{equation}
with the left hand side uniformly bounded by $O(1) (|w_r| + 1) e^{|w_r|}$ (which is $\bbP_{0,0}$-integrable). It then follows from Dominated Convergence that the convergence in~\eqref{eq:BM_conditioned_2} holds also in $L^1(\bbP_{0,0})$. This shows~\eqref{e:104.1} with 
$(\wc{W}_{[0, r]}, \wc{\cN}_{[0, r]})$ replaced by 
the process $(\wc{W}^{(r)}, \wc{\cN}^{(r)})$ 
on $C([0, r]) \times \bbM_p([0, r])$ defined by
\begin{equation}
\bbP\Big(\wc{W}^{(r)} \in \rmd w, \wc{\cN}^{(r)} \in \rmd \eta\Big) = \bbP_{0,0}\Big(\ol{W}_{[0,r]} \in \rmd w,\, \cN_{[0,r]} \in \rmd \eta\Big) \cdot h_r(w, \eta)\,.
\end{equation}
Since these (limiting) processes are consistent for different $r$'s by definition, they uniquely define an infinite time process $(\wc{W}, \wc{\cN})$ on $C([0, \infty)) \times \bbM_p([0, \infty))$, of which they are finite time restrictions, and as such~\eqref{e:104.1} holds.
\end{proof}

Next we give the proof of Lemma \ref{l:4.2}. The main idea is comparing the law of $\wh{W}$ under $\cA_t$ to that of a BM conditioned to stay positive, which converges in TV to Bessel-3 by Lemma \ref{l:2.2}.

\begin{proof}[Proof of Lemma \ref{l:4.2}]
Recall that $C([r,R])$ stands for the space of continuous functions on $[r,R]$. Thanks to Lemma~\ref{l:2.13} and the lower bound on $\bbP_{0,0}^{t,0}(\cA_t)$ implied by Lemma~\ref{lem:15}, for any $0 < s < r < R < t-s$ and measurable $\varphi :\: C([r,R]) \to \bbR$ with $\|\varphi\|_\infty \leq 1$,
\begin{align}\label{e:2.10}
\bbE_{0,0}^{t, 0} \Big(\varphi\big(\wh{W}_{t,[r,R]}\big)\,\Big|\cA_t \Big) 
= \bbE_{0,0}^{t, 0} \Big(\varphi\big(\wh{W}_{t,[r,R]}\big)\,\Big| \wt{\cA}_{t}(s, t-s)\Big) + o(1) \,,
\end{align}
where $\wt{\cA}_t(s,t-s)$ is as in Lemma~\ref{l:2.13}, and the $o(1)$ term can be made arbitrarily small by taking first $s$ large enough, then choosing any $R > r > s$, and finally taking $t$ large enough, uniformly in all such $\varphi$. 
By conditioning on $\wh{W}_{t,s}, \wh{W}_{t,t-s}$, we may further replace $\varphi\big(\wh{W}_{t,[r,R]}\big)$ in the last expectation by the quantity $\Phi_{t,s}(\wh{W}_{t,s}, \wh{W}_{t,t-s})$, where 
\begin{equation}
\Phi_{t,s}(x,y) := \bbE \Big(\varphi\big(\wh{W}_{t,[r,R]}\big)\,\big|\,W_s = x,\, W_{t-s} = y, \, \max W_{[s,t-s]} \leq 0 \Big) \,,
\end{equation}
for $x,y < 0$.

Now, by Lemma~\ref{l:2.2} and since $\gamma_{t,s} = \frac{3}{2\sqrt{2}} \log^+s - d_t s$ with $d_t \to 0$ as $t \to \infty$, we have
\begin{equation}
\Phi_{t,s}(x,y) \underset{t \to \infty}\longrightarrow \bbE \Big(\varphi(\wc{Y}_{[r,R]}\big) \,\Big|\, Y_s = x^- \Big) \,,
\end{equation}
as $t \to \infty$ for fixed $x,y,s,r,R$ and uniformly in $\varphi$.
At the time, by Lemma~\ref{l:Bessel_TV} we also have
\begin{equation}
\lim_{r \to \infty} \sup_{R \geq r}
	\Big|\bbE \Big(\varphi(\wc{Y}_{[r,R]}\big) \,\Big|\, Y_s = x^- \Big) 
	- \bbE \Big(\varphi(\wc{Y}_{[r,R]}\big) \,\Big|\, Y_0 = y_0 \Big)\Big|
= 0
\end{equation}
for fixed $s,x$, uniformly in $\varphi$. Combined, this gives for fixed $s,x,y$,
\begin{equation}
\label{e:4.18}
\lim_{r \to \infty}\sup_{R\geq r} \limsup_{t \to \infty} \Big|\Phi_{t,s}(x,y) - \bbE \big(\varphi(\wc{Y}_{[r,R]}\big) \,\big|\, Y_0 = y_0 \big) \Big| = 0 \,,
\end{equation}
uniformly in $\varphi$.

Now, observe that $W_s$, $W_{t-s}$ are jointly Gaussian under $\bbP_{0,0}^{t,0}$ with covariance matrix converging to $s\cdot \text{Id}$ as $t \to \infty$ and $s$ fixed. Applying the Bayes rule and the Markov property, we have
\begin{equation}\label{e:104.17}
\begin{split}	
\bbP_{0, 0}^{t,0}\Big((W_s, W_{t-s}) \in (dx, dy)& \,\Big|\, \wt{\cA}_{t}(s, t-s) \Big) 
\\ 
&\leq \frac{\bbP_{s,x}^{t-s,y} \big(\max W_{[s,t-s]} \leq 0\big)}{\bbP_{0,0}^{t,0}\Big(\wt{\cA}_{t}(s, t-s)\Big)} \times \bbP_{0, 0}^{t,0}\big((W_s, W_{t-s}) \in (dx, dy)\big) \\
&\leq C x^-y^-  \times (s^2)^{-\frac{1}{2}} \exp\left(-C^{-1} s^{-1} (x^2+y^2) \right) dx dy \,,
\end{split}
\end{equation}
for fixed $s$ and all large $t$, where we applied the estimates of Lemma~\ref{l:102.1}, Lemma~\ref{lem:15}, and Lemma~\ref{l:2.13}. This shows that, for fixed $s$ and all large $t$, the conditional joint density of $W_s, W_{t-s}$ in \eqref{e:104.17} is bounded by a Lebesgue-integrable function on $\bbR^2$. Invoking the Dominated Convergence Theorem, we thus get
\begin{equation}
\Big|\bbE_{0,0}^{t, 0} \Big(\varphi\big(\wh{W}_{t,[r,R]}\big)\,\Big|\wt{\cA}_{t}(s, t-s)\Big) - 
	\bbE \big(\varphi(\wc{Y}_{[r,R]}\big) \,\big|\, Y_0 = y_0 \big) \Big| 
	\longrightarrow 0 \,,
\end{equation}
in the same limits under~\eqref{e:4.18}. Combined with~\eqref{e:2.10} this gives the desired statement.
\end{proof}

Lastly we give the proof of Lemma \ref{l:3.2a}.

\begin{proof}[Proof of Lemma \ref{l:3.2a}]
	Recall that the DRW components $\cN$, $\wh{W}$ and $H$ are independent 
	and that $H$ is a collection of independent processes BBMs. Recall also that $\cN$ is a PPP and therefore its restrictions to disjoint sets are independent and any given point is almost surely uncharged. We thus have
	\begin{equation}
		\begin{split}
			\label{e:103.7}
			\bbE_{0,0}^{t,0} \big(\varphi(\cN_{[s,s+r]}) ;\; \cA_t \big)
			& \leq \bbE_{0,0}^{t,0} \big(\varphi(\cN_{[s,s+r]}) ;\; \cA_t(0,s) \cap \cA_t(s+r,t)\big) \\
			& = 	\bbE \big(\varphi(\cN_{[s,s+r]}) \,
			\bbP_{0,0}^{t,0} \big(\cA_t(0,s) \cap \cA_t(s+r,t) \big) \,.
		\end{split}
	\end{equation}
	Conditioning on $\wh{W}_{t,s}, \wh{W}_{t,s+r}$ and using Lemma~\ref{lem:15},  the last probability is at most 
	\begin{equation}
		\label{e:103.8}
C (s\vee 1)^{-1} (t-s-r)^{-1} \bbE_{0,0}^{t,0} \big(\big(\wh{W}^-_{t,s} + 1) \big(\wh{W}^-_{t,s+r} + 1\big)\big) \,.
	\end{equation}
	Since $\wh{W}^-_{t,s+r} \leq \wh{W}^-_{t,s} + (\wh{W}_{t,s+r} - \wh{W}_{t,s})^-$, the mean in~\eqref{e:103.8} can be further bounded by
	\begin{equation}
		\label{e:103.9}
		\bbE_{0,0}^{t,0} \big(\wh{W}^-_{t,s} + 1)^2 + \bbE_{0,0}^{t,0} \Big(\big(\wh{W}^-_{t,s} + 1) \big(\wh{W}_{t,s+r} - \wh{W}_{t,s})^- \Big) \,.
	\end{equation}
	
	Recall $\wh{W}_{t,s} := W_{s} - \gamma_{t,s}$. Using the definition, one can check $\gamma_{t,s} \lesssim  \log^+ s$ and $\gamma_{t,s+r} - \gamma_{t,s} \lesssim  \log (1+\frac{r}{s \vee 1})$. Also, $(W_s, W_{s+r})$ under $\bbP_{0,0}^{t,0}$ has the same joint law as $\Big(W_s - \frac{s}{t} W_t , W_{s+r} -  \frac{s+r}{t} W_t\Big)$ under $\bbP_{0,0}$. As $t\to \infty$, the latter tends jointly to $(W_s, W_{s+r})$ in $\bbL^p(\bbP_{0,0})$ for any $p \geq 1$, as can be easily verified e.g.~using time-reversal property of the BM. It then follows that the limit superior as $t \to \infty$ of~\eqref{e:103.9} is at most a constant times
	\begin{align}
		& \bbE_{0,0} W_s^2 +[1 + \log^+ s]^2 + (\bbE_{0,0} |W_s|+ 1 + \log^+ s\big) \Big(\bbE_{0,0} |W_r|+\log \big(1+\frac{r}{s \vee 1}\big)\Big) \\
		&\lesssim s + 1 + (\sqrt{s} + 1) \Big(\sqrt{r} +\log \big(1+\frac{r}{s \vee 1}\big)\Big) \lesssim (s \vee 1) \Big(1 + \sqrt{\frac{r}{s \vee 1}}\Big)\,.
	\end{align}
	This bounds the mean on the left hand side in~\eqref{e:103.7} by 
	\[C t^{-1} \Big(1 + \sqrt{\frac{r}{s \vee 1}}\Big) \, \bbE \big(\varphi(\cN[s,s+r])\big)\]
	 for all $t$ large enough, and, combined with the lower bound of $ct^{-1}$ on $\bbP_{0,0}^{t,0}(\cA_t)$ as given by Lemma~\ref{lem:15}, completes the proof.

	\end{proof}


\section*{Acknowledgments}
We thank L.-P. Arguin and L. Mytnik  for raising the question about almost sure convergence. We are also grateful to L. Mytnik and O. Zeitouni for useful discussions on Theorem~\ref{1st_result}, and to S. Munier for discussions on~\cite{Munier1}. The research of L.H.~was supported by the Deutsche Forschungsgemeinschaft (DFG, German Research Foundation)  through Project-ID 233630050 - TRR 146, through Project-ID 443891315  within SPP 2265, and Project-ID 446173099. The research of O.L.~and T.W.~was supported by the ISF grant no.~2870/21, and by the BSF award 2018330.
\bibliographystyle{abbrv}
\bibliography{BBM3Problems}
\end{document}